\newcommand{\R}{\mathbb{R}}
\newcommand{\prad}{\widehat{p}_\mathrm{rad}}
\newcommand{\ptan}{\widehat{p}_\mathrm{tan}}
\newcommand{\ptilderad}{\widetilde{p}_\mathrm{rad}}
\newcommand{\ptildetan}{\widetilde{p}_\mathrm{tan}}
\newcommand{\textfrac}[2]{{\textstyle \frac{#1}{#2}}}
\newcommand{\V}[1]{\mathbf{#1}}
\newtheorem{theorem}{Theorem}
\newtheorem{lemma}{Lemma}
\newtheorem{proposition}{Proposition}
\newtheorem{definition}{Definition}
\numberwithin{equation}{section}
\title{\bf On self-gravitating\\ polytropic elastic balls
}
 \author
 {
       Simone Calogero  \\
       {\small Department of Mathematical Sciences}  \\
       {\small Chalmers University of Technology, University of Gothenburg}  \\
       {\small Gothenburg, Sweden} \\
       {\small  \tt  calogero@chalmers.se}
       }
\date{}
\begin{document}

\maketitle
\begin{abstract}
A new four-parameters family of constitutive functions for spherically symmetric elastic bodies is introduced 
which extends the two-parameters class of polytropic fluid models widely used in several applications of fluid mechanics. The four parameters in the polytropic elastic model are the polytropic exponent $\gamma$,  the bulk modulus $\kappa$, the shear exponent $\beta$ and the Poisson ratio $\nu\in (-1,1/2]$. The two-parameters class of polytropic fluid models arises as a special case when $\nu=1/2$ and $\beta=\gamma$.  In contrast to the standard Lagrangian approach to elasticity theory, 
the polytropic elastic model in this paper is formulated directly in physical space, i.e., in terms of Eulerian state variables, which is particularly useful for the applications e.g. to astrophysics where the reference state of the bodies of interest (stars, planets, etc.) is not observable. 
After discussing some general properties of the polytropic elastic model, the steady states and the homologous motion of Newtonian self-gravitating polytropic elastic balls are investigated. It is shown numerically that static balls exist when the parameters $\gamma,\beta$ are contained in a particular region $\mathcal{O}$ of the plane, depending on $\nu$, and proved analytically for $(\gamma,\beta)\in\mathcal{V}$, where $\mathcal{V}\subset\mathcal{O}$ is a disconnected set which also depends on the Poisson ratio $\nu$. Homologous solutions describing continuously collapsing balls are constructed numerically when $\gamma=4/3$. The radius of these solutions shrinks to zero in finite time, causing the formation of a center singularity with infinite density and pressure. Expanding self-gravitating homologous elastic balls are also constructed analytically for some special values of the shear parameter $\beta$.
\end{abstract}
\newpage

\section{Introduction}
The problem of self-gravitating bodies in astrophysics is the source of many interesting questions in mathematical physics.
An important example 
is the motion of self-gravitating fluids described by the Euler-Poisson system
\begin{align*}
&\partial_t\rho+\nabla\cdot(\rho \V{u})=0,\\
&\rho(\partial_t\V{u}+\V{u}\cdot \nabla \V{u})+\nabla p=-\rho\nabla \phi,\\
&\Delta\phi=4\pi G\rho,\quad \lim_{|x|\to\infty}\phi(t,x)=0.
\end{align*}
Here $\rho, \V{u},p$ are respectively the mass density, the velocity field and the pressure of the fluid, $\phi$ denotes the fluid own self-generated Newtonian gravitational field  and $G$ is Newton's gravitational constant. The system is completed by adding a constraint between the variables $\rho,\V{u},p$, e.g., $p=F(\rho)$ for barotropic fluids.
When applied to gaseous stars, the polytropic equation of state
\begin{equation}\label{polyfluid}
p=c\rho^\gamma, \quad c,\gamma>0,
\end{equation} 
is a standard choice in astrophysics~\cite{Ch} and remains today the most studied stellar model in the literature~\cite{KW}. 
%
%

The present paper is part of a series which aims to extend the theory of self-gravitating fluids to self-gravitating elastic bodies. The articles~\cite{AC18,AC19,ACA} focus on self-gravitating spherically symmetric elastic bodies in static equilibrium. In the present paper some first results on the problem of self-gravitating elastic balls in motion will be established. 

Even though elastic matter models have long been used in astrophysics~\cite{Jeans,Lord,Love}, and despite important applications to e.g.~describe the deformation of planets~\cite{MW16} and neutron stars crusts~\cite{CH}, the problem of self-gravitating elastic bodies has received so far scarce attention compared to the analogous problem for fluid bodies. This could in part be the result of an apparent conceptual discrepancy between (non-linear) elasticity and astrophysics:  the former is originally a theory in Lagrangian form~\cite{Ciarlet, MH, ogden} describing the state of bodies in terms of spatial deformations from a given reference configuration, whereas the latter is concerned with the properties of planets, stars, galaxies, etc., in their current (physical) state.
While it is possible to study the problem of self-gravitating elastic bodies in the Lagrangian formulation of continuum mechanics~\cite{ABS,BS1,BS2,CT}, for astrophysical applications the obtained results should eventually be translated in terms of Eulerian state variables, such as $\rho,\V{u},p$ in the fluid case, which can be determined by model-fitting observational data.
However this transition from the reference to the physical space is in general not possible because the deformation map solution of the field equations in Lagrangian coordinates is typically neither globally injective nor a very regular function. 

In~\cite{AC18} it was shown that these two different approaches of astrophysics and elasticity can be reconciled, namely that the equation of state of spherically symmetric elastic bodies can be defined directly in physical space as a constraint between Eulerian state variables. This formulation has been used to prove the existence of static self-gravitating elastic matter distributions with arbitrarily large strain~\cite{AC18,AC19}, albeit only in the spherically symmetric case. This is the formulation of the theory of self-gravitating elastic balls used in the present paper. It is described, and expanded, in Section~\ref{sec2}. The (formally) equivalent Lagrangian formulation is discussed in Section~\ref{lagsec}.

Another reason for the hard interplay between astrophysics and elasticity theory is the absence of a systematic study on which elastic constitutive functions may be relevant for the applications to the problem of self-gravitating elastic bodies. Some classical elastic material models, such as the Saint-Venant-Kirchhoff model, have been used to describe planet deformations~\cite{MW16}. However, as shown in Section~\ref{constsec} below,  see also~\cite{Ciarlet}, for large deformations the Saint-Venant-Kirchhoff model---and others---violates important physical and mathematical properties, such as global hyperbolicity and the Baker-Ericksen inequality. 

One aim of this paper is to introduce a new elastic material model for spherically symmetric bodies that generalises the polytropic equation of state for fluids. More precisely, the new constitutive function implies the following properties:
%
(i) the equations of motion (conservation of mass and balance of momentum) are scale invariant and (ii) the boundary of isolated balls is subject to constant shear. While in the case of barotropic fluids these two properties uniquely characterize the polytropic equation of state, in the elastic case they still leave the freedom to choose an arbitrary function of the shear in the constitutive function. In Section~\ref{polysec} a well-motivated choice for this shear function is made, which results in a particular elastic constitutive function satisfying the properties (i) and (ii) above and which, in addition, is globally hyperbolic and verifies the Baker-Ericksen inequality. Each model in this class will be referred to as ``polytropic elastic model" and depends on four parameters $\kappa,\nu,\beta,\gamma$. The parameters $\gamma,\kappa$ correspond to the polytropic exponent and the bulk modulus, 
$\nu$ is the Poisson ratio of the elastic material ($\nu=1/2$ for fluid matter), while $\beta$ is a new parameter expressing the dependence of the elastic constitutive function on shear deformations. 

The last section of the paper is devoted to the analysis of self-gravitating polytropic elastic balls. In Section~\ref{staticsec}  static balls are constructed numerically when $\beta,\gamma$ are contained in a particular connected set $\mathcal{O}$ depending on the Poisson ratio $\nu$, see Figure~\ref{fig2}. A conjecture is put forward that $\mathcal{O}$ is the largest region in the $(\gamma,\beta)$ plane for which static self-gravitating polytropic elastic balls exist. In the fluid limit $\nu\to 1/2$, $\beta=\gamma$, the conjecture reduces to the well-known fact that static self-gravitating polytropic fluid balls exist if and only if $\gamma>6/5$. In Appendix a special case of the conjecture is proved analytically, namely that static self-gravitating polytropic elastic balls form when the parameters $\gamma,\beta$ are contained in a certain disconnected region $\mathcal{V}\subset\mathcal{O}$, depending on the Poisson ratio $\nu$.
Finally in Section~\ref{homosec} the homologous motion of self-gravitating polytropic elastic balls is investigated. Homologous solutions describe self-gravitating balls which, after a time $t_0$, are either continuously expanding or continuously collapsing and which, in the second case, develop a singularity at the center in finite time. In the fluid case (self-gravitating) homologous---and more general group invariant---solutions have been investigated for a long time, see for instance~\cite{GW, GHJS, Makino2, Ov,sideris}. For the purpose of the present investigation the articles~\cite{GW} and~\cite{Makino2}, where homologous collapsing self-gravitating fluid balls have been constructed respectively numerically and analitically, are particularly relevant. 
In this paper similar collapsing solutions are found numerically in the elastic case. Subsequently the existence of expanding elastic balls is proved analytically for some special values of the shear parameter $\beta$. 

\section{Eulerian representation of elastic balls}\label{sec2}
A spherically symmetric matter distribution is a quadruple $(\rho,p_\mathrm{rad},p_\mathrm{tan},u)$ of real-valued functions of time $t\in\R$ and radius $r>0$, where $\rho(t,r)$ is the mass density, $p_\mathrm{rad}(t,r), p_\mathrm{tan}(t,r)$ are the radial and tangential pressure and $u(t,r)$ is the radial component of the velocity field. 
The Cauchy stress tensor of spherically symmetric matter distributions is
\[
\sigma_{ij}(t,x)=\sigma_{\mathrm{rad}}(t,r)\frac{x_i x_j}{r^2}+\sigma_{\mathrm{tan}}(t,r)\left(\delta_{ij}-\frac{x_i x_j}{r^2}\right),
\]
where  $x=(x_1,x_2,x_3)\in\R^3$ are spatial Cartesian coordinates such that $r=|x|$ and 
\[
\sigma_\mathrm{rad}(t,r)=-p_\mathrm{rad}(t,r),\quad \sigma_\mathrm{tan}(t,r)=-p_\mathrm{tan}(t,r)
\]
are the radial and tangential stress in the matter interior. 

 
A spherically symmetric matter distribution $(\rho,p_\mathrm{rad},p_\mathrm{tan},u)$ is called a spherically symmetric body if its domain in the radial variable is, at all times, non-empty, connected and bounded. Any such body is therefore either a ball or a shell of matter. 
The focus of this paper is on balls of matter with strongly regular center, whose precise definition is the following. 

\begin{definition}\label{strongregball}
Let $T>0$. A quadruple $(\rho,p_\mathrm{rad},p_\mathrm{tan},u):[0,T]\times [0,\infty)\to\R^4$ is said to be a ball of matter with strongly regular center if there exists $R:[0,T]\to (0,\infty)$ such that $R\in C^1([0,T])$ and
\begin{itemize}
\item[(i)] $\rho(t,r), p_\mathrm{rad}(t,r)$ and $p_\mathrm{tan}(t,r)$ are positive for $r\in [0,R(t))$, for all $t\in [0,T]$;
\item[(ii)] $(\rho,p_\mathrm{rad},p_\mathrm{tan},u)\in C^0( \Omega_T)\cap C^1(\Omega^*_T)$, where 
\begin{align*}
&\Omega_T:=\{(t,r):0\leq t\leq T, 0\leq r\leq R(t)\},\\
&\Omega_T^*:=\{(t,r):0\leq t\leq T, 0\leq r< R(t)\}
\end{align*}
\item[(iii)] $\rho=p_\mathrm{rad}=p_\mathrm{tan}=u=0$, for $(t,r)\in [0,T]\times [0,\infty)\diagdown\Omega_T$;
\item[(iv)] $ p_\mathrm{rad}(t,0)=p_\mathrm{tan}(t,0)$,  $\partial_tp_\mathrm{rad}(t,0)=\partial_tp_\mathrm{tan}(t,0)$ and $u(t,0)=\partial_tu(t,0)=0$;
\item[(v)]
$\lim_{r\to 0^+}\partial_r\rho(t,r)=\lim_{r\to 0^+}\partial_rp_\mathrm{rad}(t,r)=\lim_{r\to 0^+}\partial_rp_\mathrm{tan}(t,r)=0$, 
and
\[
\lim_{r\to 0^+}\partial_{r}u(t,r)=\lim_{r\to 0^+}u(t,r)/r:=\omega(t),\quad \omega\in C([0,T]).
\]
\end{itemize}
\end{definition}
The fundamental equations describing the evolution of balls of matter are the continuity equation for the mass density and the balance of momentum equation:
\begin{subequations}\label{sssystem}
\begin{align}
&\partial_t\rho+\frac{1}{r^2}\partial_r(r^2\rho u)=0,\label{conteq}\\
&\rho(\partial_t u+u\partial_r u)=-\partial_r p_\mathrm{rad}+\frac{2}{r}(p_\mathrm{tan}-p_\mathrm{rad})+\rho f,
\end{align}
\end{subequations}
where $f(t,r)$ denotes the force (per unit of mass) acting on the ball and which is not due to the internal strain. In this paper we are mostly interested in self-gravitating balls, for which 
\begin{subequations}\label{Newtonforce}
\begin{equation}
f(t,r)=-G\frac{m}{r^2}
\end{equation}
where $G$ is Newton's gravitational constant and
\begin{equation}\label{mdef}
m(t,r)=4\pi\int_0^r\rho(t,s)\,s^2\,ds.
\end{equation}
\end{subequations}

The system~\eqref{sssystem} is posed in the matter interior and has to be supplemented by boundary conditions at $r=R(t)$. We assume
\begin{equation}\label{bccond}
u(t,R(t))=\dot{R}(t),\quad p_\mathrm{rad}(t,R(t))=0.
\end{equation}
The boundary condition $u(t,R(t))=\dot{R}(t)$ means that the boundary of the ball is comoving with the matter and implies, using~\eqref{conteq}, that the total mass of the ball 
\[
m(t,R(t))=4\pi\int_0^{R(t)}\rho(t,r) r^2\,dr
\]
is conserved. The boundary condition $p_\mathrm{rad}(t,R(t))=0$ signifies that the ball is surrounded by vacuum; see~\cite{KW} for other examples of boundary conditions used in astrophysics.

%

The system~\eqref{sssystem} is completed by adding an equation of state on the variables $(\rho,p_\mathrm{rad},p_\mathrm{tan},u)$.
As mentioned in the Introduction, a popular example is the barotropic fluid equation of state $p_\mathrm{rad}(t,r)=p_\mathrm{tan}(t,r)=F(\rho(t,r))$. In~\cite{AC18} it was shown that a similar equation of state can be imposed for spherically symmetric elastic bodies. 
\begin{definition}\label{elasticmaterialmodeldef}
Let $\kappa>0$ and $-1<\nu\leq 1/2$ be given. An elastic constitutive function for spherically symmetric bodies with bulk modulus $\kappa$ and Poisson ratio $\nu$ is a $C^2$ map $(\prad,\ptan):(0,\infty)^2\to\R^2$, such that $\prad(\delta,\eta)$, $\ptan(\delta,\eta)$ satisfy 
\begin{equation}\label{regularcentercond}
\widehat{p}_\mathrm{rad}(\delta,\delta)=\widehat{p}_\mathrm{tan}(\delta,\delta),\quad \partial_\eta(\prad+2\ptan)(\delta,\delta)=0,\quad\text{for all $\delta>0$,}
\end{equation}
as well as
\begin{subequations}\label{condpress}
\begin{equation}\label{naturalstate}
\prad(1,1)=\ptan(1,1)=0,
\end{equation}
\begin{equation}\label{lincomp1}
\kappa^{-1}\partial_\delta\prad(1,1)=\frac{3(1-\nu)}{1+\nu},\quad \kappa^{-1}\partial_\eta\prad(1,1)=-\frac{2(1-2\nu)}{1+\nu},
\end{equation}
\begin{equation}\label{lincomp2}
\kappa^{-1}\partial_\delta\ptan(1,1)=\frac{3\nu}{1+\nu},\quad\kappa^{-1}\partial_\eta\ptan(1,1)=\frac{1-2\nu}{1+\nu}.
\end{equation}
\end{subequations}
If there exists a $C^3$ function $\widehat{w}:(0,\infty)^2\to\R$ such that 
\begin{equation}\label{normalcond}
\widehat{w}(1,1)=0
\end{equation}
and
\begin{equation}\label{hyperelasticconst}
\widehat{p}_\mathrm{rad}(\delta,\eta)=\delta^2\partial_\delta\widehat{w}(\delta,\eta),\quad \widehat{p}_\mathrm{tan}(\delta,\eta)=\widehat{p}_\mathrm{rad}(\delta,\eta)+\frac{3}{2}\delta\eta\partial_\eta \widehat{w}(\delta,\eta),
\end{equation}
then the constitutive function is said to be hyperelastic with stored energy function $\widehat{w}$.
\end{definition}

\begin{definition}\label{elasticball}
A ball of matter $(\rho,p_\mathrm{rad},p_\mathrm{tan},u)$ is said to be an (homogeneous) elastic ball with reference density $\mathcal{K}>0$ and reference pressure $\mathcal{P}\geq0$ if there exists
an elastic constitutive function $(\prad,\ptan)$ for spherically symmetric bodies
such that the following equation of state holds:
\begin{subequations}\label{EOS}
\begin{align}
&\rho(t,r)=\mathcal{K}\delta(t,r),\\
&p_\mathrm{rad}(t,r)=\mathcal{P}+\prad(\delta(t,r),\eta(t,r)),\\
&p_\mathrm{tan}(t,r)=\mathcal{P}+\ptan(\delta(t,r),\eta(t,r)),
\end{align}
where
\begin{equation}\label{defeta}
\eta(t,r)=\frac{m(t,r)}{\frac{4\pi}{3}\mathcal{K}r^3},\quad r\in (0,R(t)).
\end{equation}
\end{subequations}
\end{definition}

{\it Remark.} The $(\delta,\eta)$-space in Definition~\ref{elasticmaterialmodeldef} corresponds to the Eulerian (or physical) space of configurations of the spherically symmetric elastic body. The point $(\delta,\eta)=(1,1)$  is called reference, or unstrained, configuration; in terms of the principal stretches $\lambda_1,\lambda_2$ we have $(\delta,\eta)=(1,1)\Leftrightarrow (\lambda_1,\lambda_2)=(1,1)$, see Section~\ref{lagsec}. The reference configuration is used in Definition~\ref{elasticball} to define the reference state of elastic balls.
In particular, according to Definition~\ref{elasticball}, (homogeneous) elastic balls have constant mass density $\mathcal{K}$ and constant (isotropic) pressure $\mathcal{P}$
in the reference state. When $\mathcal{P}=0$, as it is assumed in~\cite{AC18}, the reference state is said to be a natural state  

{\it Remark.} As a consequence of~\eqref{condpress}, all constitutive functions $(\prad,\ptan)$ for a given material (i.e., for given parameters $\kappa,\nu$) have the same linear approximation around the reference configuration $(\delta,\eta)=(1,1)$. This property expresses  the postulate of compatibility with linear elasticity in our formulation of (non-linear) elasticity theory for spherically symmetric bodies.



{\it Remark.} The Eulerian characterization of elastic balls given in Definition~\ref{elasticball} has an equivalent representation in the Lagrangian space, i.e., in the space of spatial deformations of the ball from the reference configuration, which is discussed in Section~\ref{lagsec}. In the Lagrangian formulation the radius of the ball is constant and given by
\[
Z=\left(\frac{3M}{4\pi\mathcal{K}}\right)^{1/3},
\]
where $M$ is the total mass of the ball, which is preserved by deformations. 
%

%

{\it Remark.} It is important to keep in mind the difference between the elastic constitutive function $(\prad,\ptan)$ and the equation of state~\eqref{EOS}. The former depends only on material properties, such as $\kappa$ and $\nu$, while the latter depends also on the reference state parameters $\mathcal{K},\mathcal{P}$ as well as, in general, on geometrical properties of the body. For instance, the equation of state for elastic shells is given by~\eqref{EOS} with the following alternate expression for the function $\eta$:
\[
\eta(t,r)=\left(\frac{\mathcal{S}}{r}\right)^3+\frac{3}{r^3}\int_{R_\mathrm{in}(t)}^{r}\delta(t,s)s^2ds,\quad r\in(R_\mathrm{in}(t),R(t)),
\]
where $R_{\mathrm{in}}(t)\in(0,R(t))$ is the inner radius of the shell and $\mathcal{S}$ is a physically dimensional constant which corresponds to the inner radius of the shell in the reference state, 
see~\cite{AC18}. In the case of fluid bodies, the equations of state of balls and shells are the same, because the fluid constitutive function is independent of $\eta$, see Section~\ref{fluidsec} below.

{\it Remark.} Let the shear variable $y$ be defined as
\begin{equation}\label{shear}
y=\frac{\delta}{\eta};
\end{equation}
in terms of the principal stretches $\lambda_1,\lambda_2$, we have $y=\lambda_2/\lambda_1$, see Section~\ref{lagsec}.
The condition~\eqref{regularcentercond} signifies that the state $y=1$ is isotropic and ensures in particular that the center of static elastic balls is strongly regular. Moreover it can be shown that for hyperelastic constitutive functions the second condition in~\eqref{regularcentercond} follows by the first one, see~\cite[Thm.~1]{AC19}.
 


{\it Remark.} In~\cite{AC18,AC19} the Lam\'e parameters $\lambda,\mu$ are used instead of $\kappa,\nu$. The relation between these two pairs of material constants is
\[
\kappa=\lambda+\frac{2\mu}{3},\quad\nu=\frac{\lambda}{2(\lambda+\mu)}.
\]
In this paper the Poisson ratio plays an important role, which is the reason to deviate from the notation used in~\cite{AC18,AC19}.

{\it Remark.} Letting 
\begin{subequations}\label{equivhyper}
\begin{equation}
h_1(\delta,\eta)=\delta^{-2}\prad(\delta,\eta),\quad h_2(\delta,\eta)=\frac{2}{3}(\delta\eta)^{-1}(\ptan(\delta,\eta)-\prad(\delta,\eta)),
\end{equation}
an elastic constitutive equation is hyperelastic if and only if the form $h_1(\delta,\eta)d\delta+h_2(\delta,\eta)d\eta$ is exact. Hence we have the following equivalency: 
\begin{equation}
(\prad,\ptan) \ \text{hyperelastic} \Leftrightarrow \partial_\eta h_1=\partial_\delta h_2.
\end{equation}
\end{subequations}



\subsection{Fluid constitutive functions}\label{fluidsec}
In this section we discuss how the special case of (barotropic) fluid balls fits into our formulation of elasticity theory for spherically symmetric bodies.
\begin{definition}\label{fluiddef}
An elastic constitutive function $(\prad,\ptan)$ for spherically symmetric bodies is said to be a (barotropic) fluid constitutive function if there exists $\widehat{p}:(0,\infty)\to\R$ such that
\[
\prad(\delta,\eta)=\ptan(\delta,\eta)=\widehat{p}(\delta).
\]
A ball of matter $(\rho,p_\mathrm{rad},p_\mathrm{tan},u)$ is said to be a fluid ball with reference density $\mathcal{K}>0$ and reference pressure $\mathcal{P}\geq0$ if there exists a fluid constitutive function $\widehat{p}$ such that the following equation of state holds
\begin{equation}\label{EOSfluid}
\rho(t,r)=\mathcal{K}\delta(t,r),\quad p_\mathrm{rad}(t,r)=p_\mathrm{tan}(t,r)=p(t,r)=\mathcal{P}+\widehat{p}(\delta(t,r)).
\end{equation}
\end{definition}
{\it Remark.}
By~\eqref{condpress}, $\nu=1/2$ and $\widehat{p}\,'(1)=\kappa$ must hold for a fluid constitutive function. Moreover all fluid constitutive functions are hyperelastic.

{\it Remark.} Definition~\ref{fluiddef} is perfectly meaningful even for shells of matter, or even if the body is not spherically symmetric, but in the following it will be applied only to fluid balls.
%

The important example of polytropic fluid balls, i.e., fluid balls with equation of state~\eqref{polyfluid}, is obtained by choosing the constitutive function and the reference pressure as
\begin{equation}\label{constpolyfluid}
\widehat{p}(\delta)=\frac{\kappa}{\gamma}\,(\delta^{\gamma}-1),\quad \mathcal{P}=\frac{\kappa}{\gamma}=-\widehat{p}(0),
\end{equation} 
which we call respectively the polytropic fluid constitutive function and the polytropic reference pressure. The constant $c>0$ in~\eqref{polyfluid} is given by $c=\kappa/(\gamma\mathcal{K}^\gamma)$; the stored energy function of polytropic fluid balls is
\begin{equation}\label{polystored}
\widehat{w}_\mathrm{pf}(\delta,\eta)=\widehat{w}_\mathrm{pf}(\delta)=\frac{\kappa}{\gamma}\left(\frac{\delta^{\gamma-1}-1}{\gamma-1}+\delta^{-1}-1\right).
\end{equation} 
The advantage of this re-formulation of the polytropic fluid model is that it arises from a unified description of fluid and elastic bodies. 

%

\subsection{Dynamically equivalent constitutive functions}\label{dynsec}
The system~\eqref{sssystem} for elastic balls becomes
\begin{subequations}\label{sssystem2}
\begin{align}
&\partial_t\delta+\frac{1}{r^2}\partial_r(r^2\delta u)=0,\label{continuityeq}\\
&\mathcal{K}\delta(\partial_t u+u\partial_r u)=-\widehat{a}(\delta,\eta)\partial_r\delta+\widehat{b}(\delta,\eta)\frac{\eta-\delta}{r}+\mathcal{K}\delta f\label{equ},
\end{align}
where 
\begin{equation}\label{abc}
\widehat{a}(\delta,\eta)=\partial_\delta\prad(\delta,\eta),\quad \widehat{b}(\delta,\eta)=2\frac{\widehat{p}_\mathrm{tan}(\delta,\eta)-\widehat{p}_\mathrm{rad}(\delta,\eta)}{\eta-\delta}+3\partial_\eta\widehat{p}_\mathrm{rad}(\delta,\eta).
\end{equation}
Moreover $\eta(t,r)$ satisfies the equations
\begin{equation}\label{eqseta}
\partial_t\eta+u\partial_r\eta=-3\frac{u}{r}\eta,\quad \partial_r\eta=-\frac{3}{r}(\eta-\delta),\quad \partial_t\eta=-3\frac{u}{r}\delta.
\end{equation}
\end{subequations}
The quantity $\widehat{b}(\delta,\eta)$ in~\eqref{equ} is the new term that distinguishes the equations of motion of elastic balls from those of fluid balls. As
$\lim_{\delta\to\eta}\widehat{b}(\delta,\eta)=\partial_\eta(\prad+2\ptan)(\delta,\delta)$, then, by~\eqref{regularcentercond}, 
\begin{equation}\label{bzero}
\lim_{\delta\to\eta}\widehat{b}(\delta,\eta)=0,
\end{equation}
i.e.,  the function $\widehat{b}$ vanishes on the isotropic state $\delta=\eta$.


The system~\eqref{sssystem2} depends only on the elastic constitutive function and the reference density $\mathcal{K}$; it is independent of the reference pressure $\mathcal{P}$. However different elastic constitutive functions give rise to the same system~\eqref{sssystem2} on the variables $(\delta,\eta,u)$ when the conditions in the following definition are satisfied.
\begin{definition}
Two constitutive functions $(\prad,\ptan)$, $(\ptilderad,\ptildetan)$ are said to be dynamically equivalent if $\widehat{a}(\delta,\eta)=\widetilde{a}(\delta,\eta)$ and $\widehat{b}(\delta,\eta)=\widetilde{b}(\delta,\eta)$, for all $\delta,\eta>0$.
\end{definition}
In the fluid case, dynamically equivalent constitutive functions are necessarily identical, i.e., $\widehat{p}(\delta)=\widetilde{p}(\delta)$. In the more general case of elastic materials, we have the following simple result. 
\begin{lemma}\label{dyneqlemma}
Two elastic constitutive functions $(\prad,\ptan)$, $(\ptilderad,\ptildetan)$ for spherically symmetric bodies are dynamically equivalent if and only if there exists a $C^2$ function $q:(0,\infty)\to\R$ such that $q(1)=q'(1)=0$ and 
\begin{equation}\label{dynequiv}
\prad(\delta,\eta)=\ptilderad(\delta,\eta)+q(\eta),\quad \ptan(\delta,\eta)=\ptildetan(\delta,\eta)+q(\eta)-\frac{3}{2}(\eta-\delta)q'(\eta).
\end{equation}
\end{lemma}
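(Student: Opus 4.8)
The plan is to prove the two implications separately, the engine in both being that dynamical equivalence is the pair of pointwise identities $\widehat{a}=\widetilde{a}$ and $\widehat{b}=\widetilde{b}$ from~\eqref{abc}, with $\widehat{a}=\partial_\delta\prad$ controlling the $\delta$-dependence of the radial pressure and $\widehat{b}$ controlling the tangential part. Throughout I take both constitutive functions to be defined for the same material, i.e.\ for common parameters $\kappa,\nu$, which is the natural setting for comparing constitutive functions and is what makes their linear approximations at the reference configuration coincide.

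For necessity I would first exploit $\widehat{a}=\widetilde{a}$: since $\partial_\delta\prad=\partial_\delta\ptilderad$ for all $(\delta,\eta)$, the difference $\prad-\ptilderad$ has vanishing $\delta$-derivative and hence depends on $\eta$ alone, so I set $q(\eta):=\prad(\delta,\eta)-\ptilderad(\delta,\eta)$, which is $C^2$ because $\prad,\ptilderad\in C^2$; this is already the first identity in~\eqref{dynequiv}. Inserting this into $\widehat{b}=\widetilde{b}$ and clearing the (only apparently singular, cf.~\eqref{bzero}) factor $1/(\eta-\delta)$, the $3\partial_\eta\prad$ terms contribute $3q'(\eta)$ while the tangential terms collapse to $2\big[(\ptan-\ptildetan)-q(\eta)\big]/(\eta-\delta)$, so that $(\ptan-\ptildetan)-q(\eta)=-\tfrac{3}{2}(\eta-\delta)q'(\eta)$, which is exactly the second identity in~\eqref{dynequiv}. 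It then remains to read off the normalization: $q(1)=\prad(1,1)-\ptilderad(1,1)=0$ by~\eqref{naturalstate}, and $q'(1)=\partial_\eta\prad(1,1)-\partial_\eta\ptilderad(1,1)=0$ since the two functions share the same $\kappa,\nu$ and hence the same value $\partial_\eta\prad(1,1)$ prescribed by~\eqref{lincomp1}.

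For sufficiency I would substitute~\eqref{dynequiv} into~\eqref{abc} and verify the two identities directly. The radial check is immediate, $\partial_\delta\prad=\partial_\delta\ptilderad+\partial_\delta q(\eta)=\partial_\delta\ptilderad$, so $\widehat{a}=\widetilde{a}$. For $\widehat{b}$ one computes $\ptan-\prad=(\ptildetan-\ptilderad)-\tfrac{3}{2}(\eta-\delta)q'(\eta)$, whence the $-3q'$ produced by $2(\ptan-\prad)/(\eta-\delta)$ cancels the $+3q'$ produced by $3\partial_\eta\prad$, leaving $\widehat{b}=\widetilde{b}$. Here the values $q(1)=q'(1)=0$ are not needed for the identities themselves; their role is to guarantee that $(\prad,\ptan)$ inherits the natural-state condition~\eqref{naturalstate} and the linear-compatibility~\eqref{lincomp1} from $(\ptilderad,\ptildetan)$, so that it is indeed a constitutive function for the same material.

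I expect the only delicate points to be bookkeeping rather than conceptual. The first is the handling of the factor $1/(\eta-\delta)$ in $\widehat{b}$, resolved by cross-multiplying and invoking the regularity at the isotropic locus $\delta=\eta$ encoded in~\eqref{bzero}, so that nothing is lost there. The second is correctly attributing the two normalization conditions to their sources—$q(1)=0$ to~\eqref{naturalstate} and $q'(1)=0$ to the shared linear approximation~\eqref{lincomp1} at $(\delta,\eta)=(1,1)$. A minor regularity remark, namely that $(\eta-\delta)q'(\eta)$ is as smooth as the $C^2$ pressures require, closes any remaining gap.
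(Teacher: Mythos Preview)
Your proposal is correct and follows essentially the same approach as the paper: derive $q(\eta)$ from $\widehat{a}=\widetilde{a}$, read off $q(1)=q'(1)=0$ from~\eqref{condpress}, and then recover the tangential relation from $\widehat{b}=\widetilde{b}$. The paper's proof is terser and leaves the sufficiency direction (and the check that $(\prad,\ptan)$ inherits~\eqref{regularcentercond}--\eqref{condpress}) implicit, but your plan is the same argument spelled out in more detail.
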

\begin{proof}
The condition $\widehat{a}(\delta,\eta)=\widetilde{a}(\delta,\eta)$
is equivalent to $\prad(\delta,\eta)=\ptilderad(\delta,\eta)+q(\eta)$, 
where, by~\eqref{condpress}, $q(1)=q'(1)=0$. Substituting in the condition $\widehat{b}(\delta,\eta)=\widetilde{b}(\delta,\eta)$ gives the second equation in~\eqref{dynequiv}.\end{proof}
\begin{lemma}\label{bfluid}
If the constitutive function $(\prad,\ptan)$ is hyperelastic and $\widehat{b}(\delta,\eta)\equiv0$, then $(\prad,\ptan)$ is dynamically equivalent to a fluid constitutive function.
\end{lemma}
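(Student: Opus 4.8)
The plan is to show that the two hypotheses force the coefficient $\widehat a=\partial_\delta\prad$ to be independent of $\eta$, after which an equivalent fluid can be read off immediately. Recall from~\eqref{abc} that a fluid constitutive function $\ptilderad=\ptildetan=\widehat p(\delta)$ has $\widetilde a=\widehat p\,'(\delta)$ and $\widetilde b\equiv 0$ (its $b$-coefficient vanishes because both $\ptildetan-\ptilderad$ and $\partial_\eta\ptilderad$ are zero). Since the hypothesis is $\widehat b\equiv 0$, the original material and any fluid automatically agree in their $b$-coefficient, so dynamical equivalence reduces to matching the $a$-coefficients. This can be arranged precisely when $\partial_\delta\prad$ does not depend on $\eta$, by then setting $\widehat p(\delta):=\prad(\delta,1)$.

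The heart of the argument is therefore to prove that $\partial_\eta\partial_\delta\prad\equiv 0$, and this is where hyperelasticity enters. I would rewrite $\widehat b\equiv 0$ using~\eqref{abc} as $\ptan-\prad=-\tfrac32(\eta-\delta)\partial_\eta\prad$, and substitute this into the one-form coefficient $h_2$ of~\eqref{equivhyper}, obtaining $h_2=-(\tfrac1\delta-\tfrac1\eta)\partial_\eta\prad$ while $h_1=\delta^{-2}\prad$. Imposing the exactness condition $\partial_\eta h_1=\partial_\delta h_2$ from~\eqref{equivhyper}, which by that remark is equivalent to hyperelasticity, and computing both sides, the terms $\delta^{-2}\partial_\eta\prad$ cancel and one is left with $(\tfrac1\delta-\tfrac1\eta)\,\partial_\eta\partial_\delta\prad=0$. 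Since $\tfrac1\delta-\tfrac1\eta\neq 0$ whenever $\delta\neq\eta$, this gives $\partial_\eta\partial_\delta\prad=0$ off the diagonal, hence on all of $(0,\infty)^2$ by continuity of the second derivatives (the constitutive function is $C^2$).

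It then remains to check that $\widehat p(\delta):=\prad(\delta,1)$ is a genuine fluid constitutive function and that the equivalence holds. Because $\partial_\delta\prad$ is $\eta$-independent we have $\widehat p\,'(\delta)=\partial_\delta\prad(\delta,\eta)=\widehat a(\delta,\eta)$ for every $\eta$, so $\widetilde a=\widehat a$; combined with $\widetilde b\equiv 0\equiv\widehat b$ this yields dynamical equivalence directly from its definition. The normalization~\eqref{naturalstate} gives $\widehat p(1)=\prad(1,1)=0$, and~\eqref{lincomp1} gives $\widehat p\,'(1)=\partial_\delta\prad(1,1)=\kappa\,\tfrac{3(1-\nu)}{1+\nu}>0$ (using $\nu\in(-1,1/2]$), which plays the role of the positive bulk modulus of the equivalent fluid, whose Poisson ratio is necessarily $1/2$.

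The main obstacle is the middle step: neither $\widehat b\equiv 0$ alone nor hyperelasticity alone forces the separation $\partial_\eta\partial_\delta\prad=0$; it is precisely the interplay of the two, through the cancellation in the exactness identity, that produces it. A secondary subtlety worth flagging is that the equivalent fluid carries Poisson ratio $\nu=1/2$, in general different from the $\nu$ of the original material; for this reason I would verify dynamical equivalence straight from its definition rather than through Lemma~\ref{dyneqlemma}, whose characterization with $q(1)=q'(1)=0$ presupposes two constitutive functions sharing the same $\kappa,\nu$.
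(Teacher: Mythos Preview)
Your proof is correct and reaches the same conclusion as the paper---that $\partial_\eta\partial_\delta\prad\equiv 0$---but by a different route. The paper differentiates $\widehat b$ with respect to $\delta$, expresses the result via the stored energy $\widehat w$ using~\eqref{hyperelasticconst}, and manipulates until it recognises the identity
\[
\tfrac13\partial_\delta\widehat b=\partial_\eta\partial_\delta\prad+\frac{\eta}{3\delta(\eta-\delta)}\,\widehat b,
\]
from which $\widehat b\equiv 0$ immediately gives $\partial_\eta\partial_\delta\prad=0$. You instead use $\widehat b\equiv 0$ first to rewrite $\ptan-\prad$, feed this into the one-form coefficient $h_2$ of~\eqref{equivhyper}, and invoke the exactness criterion $\partial_\eta h_1=\partial_\delta h_2$ rather than the stored energy itself. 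Your calculation is a touch shorter and never introduces $\widehat w$ explicitly; the paper's route has the minor advantage of producing an identity valid even when $\widehat b\not\equiv 0$. You are also more explicit than the paper about actually constructing the equivalent fluid $\widehat p(\delta)=\prad(\delta,1)$ and checking it satisfies the axioms of a fluid constitutive function; your observation that the fluid necessarily carries $\nu=1/2$ (and a rescaled bulk modulus), so that dynamical equivalence should be verified directly from its definition rather than through Lemma~\ref{dyneqlemma}, is a worthwhile point the paper leaves implicit.
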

\begin{proof}
We prove the result by showing that, under the stated assumptions, $\partial_\delta\prad$ is independent of $\eta$. By~\eqref{hyperelasticconst} we have
\begin{align*}
\textfrac{1}{3}\partial_\delta \widehat{b}(\delta,\eta)&=\partial_\eta\partial_\delta\prad(\delta,\eta)+\partial_\delta\left(\frac{\delta\eta}{\eta-\delta}\partial_\eta\widehat{w}(\delta,\eta)\right)\\
&=\partial_\eta\partial_\delta\prad(\delta,\eta)+\frac{\eta^2}{(\eta-\delta)^2}\partial_\eta\widehat{w}(\delta,\eta)+\frac{\delta\eta}{\eta-\delta}\partial_\delta\partial_\eta\widehat{w}(\delta,\eta)\\
&=\partial_\eta\partial_\delta\prad(\delta,\eta)+\frac{2}{3}\frac{\eta}{\delta(\eta-\delta)^2}(\ptan(\delta,\eta)-\prad(\delta,\eta))+\frac{\eta}{\delta(\eta-\delta)}\partial_\eta\prad(\delta,\eta)\\
&=\partial_\eta\partial_\delta\prad(\delta,\eta)+\frac{\eta}{3\delta(\eta-\delta)}\partial_\delta \widehat{b}(\delta,\eta),
\end{align*}
by which the claim follows.
\end{proof} 

\subsection{Lagrangian formulation}\label{lagsec}
The purpose of this section is to derive the Lagrangian formulation for the evolution problem of elastic balls implied by the Eulerian formulation in physical space described so far. The relation between the two formulations in the static case is discussed in~\cite{AC18}. For simplicity we set $f\equiv 0$ in this section.

Let $(\delta,\eta,u)$ be a smooth solution of the system~\eqref{sssystem2}$_{f\equiv 0}$ describing the motion of an elastic ball with radius $R(t)$. We assume in addition that $u$ satisfies the boundary condition $u(t,R(t))=\dot{R}(t)$, so that in particular the total mass $M$ of the ball is conserved.
Define 
\begin{subequations}\label{configurationmap}
\begin{equation}\label{Q}
Z=\left(\frac{3M}{4\pi\mathcal{K}}\right)^{1/3},\quad \phi_t: [0,R(t)]\to [0,Z],\quad \phi_t(r)=(\eta(t,r)r^3)^{1/3}.
\end{equation}
$\phi_t$ is monotonically increasing and satisfies $\phi_t(0)=0$, $\phi_t(R(t))=Z$, $\phi'_t(0)=1$.
Hence 
\begin{equation}\label{psi}
\psi_t=\phi_t^{-1}:[0,Z]\to[0,R(t)]
\end{equation}
is also monotonically increasing and satisfies $\psi_t(0)=0$, 
$\psi_t(Z)=R(t)$, $\psi'_t(0)=1$. Letting 
\begin{equation}
\mathcal{B}=\{X\in\R^3:|X|=z\in [0,Z)\},
\end{equation}
the deformation function is defined as 
\begin{equation}
\Psi_t:\overline{\mathcal{B}}\to\R^3,\quad \Psi_t(X)=\frac{\psi_t(z)}{z}X.
\end{equation}
The region $\mathcal{B}$ is the interior of the elastic ball in the reference configuration, while 
\[
\Psi_t(\mathcal{B})=\{x\in\R^3: |x|=r\in [0,R(t))\}
\] 
is the interior of the elastic ball in physical space.  $\Psi_t$ is a diffeomorphism from $\overline{\mathcal{B}}$ onto $\overline{\Psi_t(\mathcal{B})}$. The deformation gradient is 
\[
F_{ij}(X)=(\nabla\Psi_t)_{ij}(X)=\psi_t'(z)\frac{X_iX_j}{z^2}+\frac{\psi_t(z)}{z}\left(\delta_{ij}-\frac{X_iX_j}{z^2}\right),\quad X\in\mathcal{B},\ z=|X|.
\]
Letting
\[
\det(F(X))=\psi'_t(z)\left(\frac{\psi_t(z)}{z}\right)^2=J_t(z),
\]
and using the identity  $\phi'_t(r)=\delta(t,r)/\eta(t,r)^{2/3}$ we find
$\delta(t,r)=J_t(\phi_t(r))^{-1}$.
Hence the last equation in~\eqref{eqseta} gives
\[
u(t,r)=-\frac{r\partial_t\eta(t,r)}{3\delta(t,r)}=-\left(\frac{\phi_t(r)}{r}\right)^2J_t(\phi_t^{-1}(r))\partial_t\phi_t(r)=-\psi'_t(\phi_t(r))\partial_t\phi_t(r)=\partial_t\psi_t(\phi_t(r)),
\]
where in the last step we used $\psi_t(\phi_t(r))=r$. It follows that 
\begin{equation}\label{dtpsi}
\partial_t\Psi_t:\overline{\mathcal{B}}\to\R^3,\quad \partial_t\Psi_t=\frac{u_t\circ\psi_t^{-1}(z)}{z}X.
\end{equation}
\end{subequations}
Equations~\eqref{configurationmap} define the deformation function and its time derivative in terms of the solution $(\delta,\eta,u)$ of the system~\eqref{sssystem2}. Conversely, given a deformation $\Psi_t:\overline{\mathcal{B}}\to\R^3$ of the form $\Psi_t(X)=\psi_t(z)X/z$, where $\psi_t:[0,Z]\to\R$ is $C^1$ and monotonically increasing, the functions 
%
%
\begin{equation}\label{functions}
u(t,r)=\partial_t\psi_t(z),\quad \delta(t,r)=\frac{z^2}{\psi_t(z)^2\psi_t'(z)}, \quad \eta(t,r)=\left(\frac{z}{\psi_t(z)}\right)^3,\quad z=\psi^{-1}_t(r)
\end{equation}
satisfy~\eqref{continuityeq} and~\eqref{eqseta} for $r\in (0,R(t))$, where $R(t)=\psi_t(Z)$.
If in addition $\psi(t,z)=\psi_t(z)$ solves the  non-linear wave equation 
\begin{subequations}\label{NLW}
\begin{equation}
\mathcal{K}\partial_t^2\psi-A(z,\psi,\partial_z\psi)\partial_z^2\psi+\frac{B(z,\psi,\partial_z\psi)}{\psi}=0,\quad t>0, \ z\in (0,Z),
\end{equation}
where 
\begin{align}
&A(z,\psi,\partial_z\psi)=\frac{1}{(\partial_z\psi)^2}\Big\{\widehat{a}(\delta,\eta)\Big\}_{\displaystyle{(\delta=z^2/(\psi^2\partial_z\psi),\eta=(z/\psi)^3})}\\[0.5cm]
&B(z,\psi,\partial_z\psi)=\Big\{(\delta-\eta)\Big(\frac{2\,\widehat{a}(\delta,\eta)}{\eta}+\frac{\widehat{b}(\delta,\eta)}{\delta}\Big)\Big\}_{\displaystyle{(\delta=z^2/(\psi^2\partial_z\psi),\eta=(z/\psi)^3})}
\end{align}
\end{subequations}
then $(\delta,\eta,u)$ given by~\eqref{functions} satisfies the balance of momentum equation~\eqref{equ}$_{f\equiv 0}$ as well. 
The boundary condition $u(t,R(t))=\dot{R}(t)$ is identically satisfied, since $R(t)=\psi(t,Z)$ and $u(t,R(t))=\partial_t\psi(t,Z)$. 

{\it Remark.} While the transition $(\delta,\eta,u)\to (\Psi,\partial_t\Psi)$ is possible for all sufficiently smooth solutions of~\eqref{sssystem2}, the opposite transition $(\Psi,\partial_t\Psi)\to(\delta,\eta,u)$ is only possible if the configuration map solution of~\eqref{NLW} satisfies $\partial_z\psi(t,z)>0$, for all $z\in [0,Z]$.

\subsubsection*{Stored energy function}
Let $(\Lambda_1^2,\Lambda_2^2,\Lambda_3^2)$ be the eigenvalues of the 
(right) Cauchy-Green tensor $C=F^TF$.
In the Lagrangian formulation of elasticity, the stored energy function of homogeneous, isotropic and frame indifferent hyperelastic materials depends only the principal stretches  $(\Lambda_1,\Lambda_2,\Lambda_3)$.  In the case of spherically symmetric deformations we have
\[
C_{ij}(X)=(\psi_t'(z))^2\frac{X_iX_j}{z^2}+\left(\frac{\psi_t(z)}{z}\right)^2\left(\delta_{ij}-\frac{X_iX_j}{z^2}\right),\quad X\in\mathcal{B},\ z=|X|,
\]
hence $\Lambda_1(t,z)=\psi'_t(z), \Lambda_2(t,z)=\Lambda_3(t,z)=\psi_t(z)/z$, or equivalently
\[
\Lambda_i(t,z)=\lambda_i(t,\psi_t(z)),\quad \lambda_1(t,r)=\frac{\eta(t,r)^{2/3}}{\delta(t,r)}\quad \lambda_2(t,r)=\lambda_3(t,r)=\frac{1}{\eta(t,r)^{1/3}},
\]
from which we derive in particular $y=\delta/\eta=\lambda_2/\lambda_1$. Letting $\widehat{W}(\Lambda_1,\Lambda_2)$ be a stored energy function for spherically symmetric deformations in the Lagrangian space, the expression
\[
\widehat{w}(\delta,\eta)=\widehat{W}(\delta^{-1}\eta^{2/3},\eta^{-1/3}),
\]
defines the stored energy function in the Eulerian space used in Definition~\ref{elasticmaterialmodeldef}.
For instance the stored energy function of  Saint-Venant Kirchhoff (SVK) materials for spherically symmetric deformations is given by
\[
\kappa^{-1}\widehat{W}_{SVK}(\Lambda_1,\Lambda_2)=-\frac{3}{4}(\Lambda_1^2+2\Lambda_2^2)+\frac38\frac{1-\nu}{1+\nu}(\Lambda_1^4+2\Lambda_2^4)+\frac34\frac{\nu}{1+\nu}(\Lambda_2^4+2\Lambda_1^2\Lambda_2^2)+\frac{9}{8},
\]
see~\cite[p.~183]{Ciarlet}.
Hence in physical space we find
\begin{align}\label{SVK}
\kappa^{-1}\widehat{w}_\mathrm{SVK}(\delta,\eta)&=\eta^{-4/3}\left(\frac{3(1-\nu)}{8(1+\nu)}\left(\frac{\delta}{\eta}\right)^{-4}+\frac{3\nu}{2(1+\nu)}\left(\frac{\delta}{\eta}\right)^{-2}+\frac{3}{4(1+\nu)}\right)\nonumber\\
&\quad+\eta^{-2/3}\left(-\frac{3}{4}\left(\frac{\delta}{\eta}\right)^{-2}-\frac{3}{2}\right)+\frac{9}{8}.
\end{align}

The constitutive function for the principal pressures in the Lagrangian space is given in terms of the stored energy function by
\begin{subequations}\label{servonodopo}
\begin{equation}
\widehat{P}_\mathrm{rad}(\Lambda_1,\Lambda_2)=-\partial_1\widehat{W}(\Lambda_1,\Lambda_2),\quad\ \widehat{P}_\mathrm{tan}(\Lambda_1,\Lambda_2)=-\frac{1}{2}\partial_2\widehat{W}(\Lambda_1,\Lambda_2),
\end{equation}
see~\cite{AC18},
which are equivalent to the formulas~\eqref{hyperelasticconst} in the Eulerian space, with
\begin{equation}
\prad(\delta,\eta)=\eta^{2/3}\widehat{P}_\mathrm{rad}(\delta^{-1}\eta^{2/3},\eta^{-1/3}),\quad\ptan(\delta,\eta)=\frac{\delta}{\eta^{1/3}}\widehat{P}_\mathrm{tan}(\delta^{-1}\eta^{2/3},\eta^{-1/3}).
\end{equation}
\end{subequations}
%
%

\subsection{Constitutive inequalities}\label{constsec}
The constitutive function $(\prad,\ptan)$ will now be required to satisfy three important inequalities in continuum mechanics.
The first one follows by demanding that the system~\eqref{sssystem2} with $f\equiv 0$ be strictly hyperbolic. Letting $q=\delta u$ be the local (radial) momentum and
\[
 v=\begin{pmatrix} \delta \\ \eta \\ q\end{pmatrix},\quad A(v)=\begin{pmatrix}0 & 0 & 1\\ 0 & \frac{q}{\delta} & 0 \\ -(\frac{q}{\delta})^2+\frac{\widehat{a}(\delta,\eta)}{\mathcal{K}}& \frac{\widehat{b}(\delta,\eta)}{3\mathcal{K}} & \frac{2q}{\delta}\end{pmatrix},\quad F(r,v)=-\frac{1}{r\delta}\begin{pmatrix}2q\delta \\ {\displaystyle  3q\eta} \\ 2q^2 \end{pmatrix},
\]
the system~\eqref{sssystem2}$_{f\equiv 0}$ can be written in the form 
\[
\partial_t v+A(v)\partial_r v=F(r,v);
\]
the function $F(r,v)$ is bounded at $r=0$ for solutions with strongly regular center. 
The eigenvalues of $A$ are 
\[
\mathrm{Eigen}(A)=(\frac{q}{\delta},\frac{q}{\delta}-\sqrt{\widehat{a}(\delta,\eta)},\frac{q}{\delta}+\sqrt{\widehat{a}(\delta,\eta)})=(u,u-\sqrt{\partial_\delta\prad(\delta,\eta)},u+\sqrt{\partial_\delta\prad(\delta,\eta)}).
\]
Hence the system~\eqref{sssystem2}$_{f\equiv 0}$ is strictly hyperbolic if $\partial_\delta\prad(\delta,\eta)>0$. 
\begin{definition}\label{hyperdef}
An elastic constitutive function $(\prad,\ptan)$ for spherically symmetric bodies is said to be globally strictly hyperbolic if
\begin{equation}\label{hypercond}
\partial_\delta\prad(\delta,\eta)>0,\quad \text{for all $(\delta,\eta)\in (0,\infty)^2$.}
\end{equation}
If the constitutive function is hyperelastic then the stored energy function itself is said to be globally strictly hyperbolic. 
\end{definition}
A further condition that is expected to be satisfied by all reasonable elastic constitutive functions
is the Baker-Ericksen inequality~\cite{BEref}.  In the Lagrangian formulation of elasticity theory, the Baker-Ericksen inequality is an assumption on the stored energy function implying that the deformation in the interior of the body along each principal direction increases with the intensity of the applied force, see~\cite{MH}. More precisely, 
let $\psi:\mathcal{B}\to\R^3$ be the deformation function of a body from a reference configuration in which it occupies the region $\mathcal{B}\subset\R^3$. Let $F=\nabla\psi$ be the deformation gradient, $C=F^TF$ the (right) Cauchy-Green tensor and $\Lambda_1^2,\Lambda_2^2,\Lambda_3^2$ the eigenvalues of $C$. Let $\Phi(\Lambda_1,\Lambda_2,\Lambda_3)$ be a stored energy function for homogeneous, isotropic and frame indifferent elastic materials. 
Then the Baker-Ericksen  inequality reads
\begin{equation}\label{BElag}
\frac{\Lambda_i\partial_i\Phi-\Lambda_j\partial_j\Phi}{\Lambda_i-\Lambda_j}>0,\quad\Lambda_i\neq\Lambda_j.
\end{equation}
As shown in~\cite{MH}, the Baker-Ericksen inequality is implied by  the strong ellipticity condition on the stored energy function in the Lagrangian space.

The form of the Baker-Ericksen inequality in our Eulerian formulation of elasticity theory for spherically symmetric bodies is derived using~\eqref{servonodopo} with $\widehat{W}(\Lambda_1,\Lambda_2)= \Phi(\Lambda_1,\Lambda_2,\Lambda_2)$ and reads $(\delta-\eta)^{-1}(\ptan(\delta,\eta)-\prad(\delta,\eta))>0$, for all $\eta\neq \delta$. However for our purpose it is convenient to slightly modify this inequality to the following. 
\begin{definition}\label{BEdef}
The inequality
\begin{equation}\label{BEineq}
\frac{\ptan(\delta,\eta)-\prad(\delta,\eta)}{\eta-\delta}\geq0,
\end{equation}
on the elastic constitutive function $(\prad,\ptan)$ for spherically symmetric bodies is called strong Baker-Ericksen inequality if it is satisfied for all $(\delta,\eta)\in (0,\infty)^2$, and weak Baker-Ericksen inequality if it is satisfied for all $\eta\geq\delta>0$.
\end{definition}

The inequality in Definition~\ref{BEdef} differs from~\eqref{BElag} in three ways. Firstly, we introduced a weak and a strong version of the Baker-Ericksen inequality;  the reason for this is that the weak version suffices in the important case of static bodies, see Lemma~\ref{propostatic}. Secondly we replaced the strict inequality in~\eqref{BElag} with the non-strict one, so that~\eqref{BEineq} can also be applied to fluid modes. Finally the condition $\Lambda_i\neq \Lambda_j$, which in our formulation is equivalent to $\eta\neq\delta$, has not been imposed in~\eqref{BEineq}. Hence we assume that the Baker-Ericksen inequality still holds in the limit $\eta\to\delta$.

To justify the next constitutive inequality, we remark that, by~\eqref{condpress}, $\nabla\widehat{w}(1,1)=0$ and
\[
\kappa^{-1}\nabla^2\widehat{w}(1,1)=\frac{1}{1+\nu}\begin{pmatrix}3(1-\nu)&-2(1-2\nu)\\-2(1-2\nu)& 2(1-2\nu)\end{pmatrix},
\]
which is positive definite for $\nu\in(-1,1/2)$. 
It follows that the stored energy function has a local minimum at the reference configuration $(\delta,\eta)=(1,1)$, which we shall now require to be global (but not necessarily unique).
As for the Baker-Ericksen inequality, we introduce a weak and a strong version of this condition, the former being sufficient in the case of static solutions.

\begin{definition}\label{posenergy}
An hyperelastic constitutive function with stored energy function $\widehat{w}$ is said to satisfy the weak, respectively strong, non-negative stored energy condition if 
\begin{equation}
\widehat{w}(\delta,\eta)\geq 0,\quad\text{for all $\eta\geq\delta>0$, respectively for all $(\delta,\eta)\in (0,\infty)^2$.}
\end{equation}
\end{definition}

{\it Remark.}
Definitions~\ref{hyperdef}--\ref{posenergy} do not exclude the possibility that the constitutive inequalities in this section only hold for some, and not all, values of the Poisson ratio $\nu$, i.e., only for specific materials.

{\it Remark.} The polytropic fluid constitutive function~\eqref{constpolyfluid} is globally strictly hyperbolic and satisfies the strong non-negative stored energy condition as well as the strong Baker-Ericksen inequality.

It turns out that several elastic models used in materials science satisfy the non-negative stored energy condition, but are neither globally hyperbolic nor satisfy the Baker-Ericksen inequality. 
For instance, the region in the $(\delta,\eta)$ plane where the SVK model~\eqref{SVK} is hyperbolic and verifies the Baker-Ericksen inequality is depicted in Figure~\ref{figura}. In particular, these two important constitutive inequalities are violated for large deformations. 

\begin{figure}[ht!]
\begin{center}
\subfigure{
\includegraphics[scale=0.6]{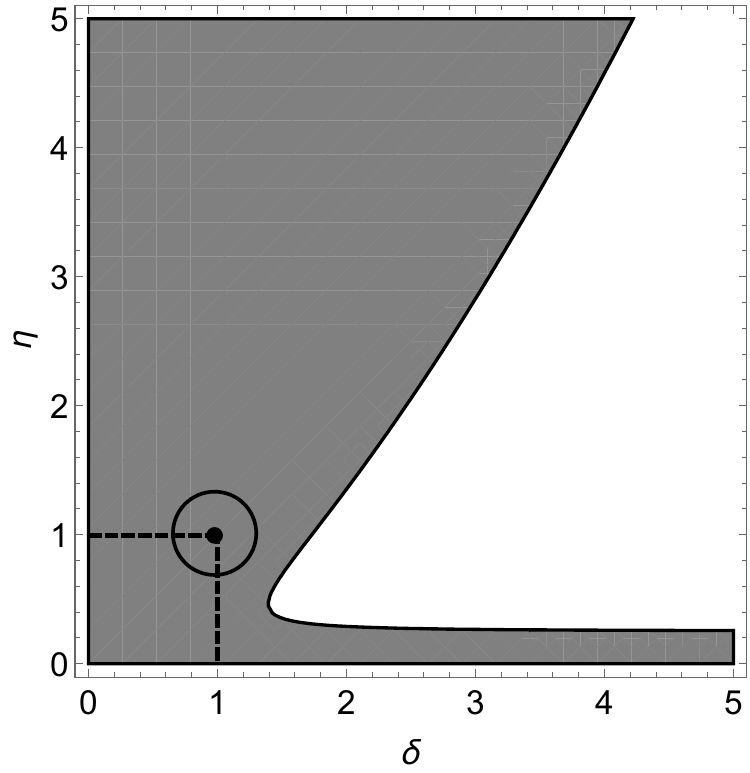}}\qquad
\subfigure{
\includegraphics[scale=0.6]{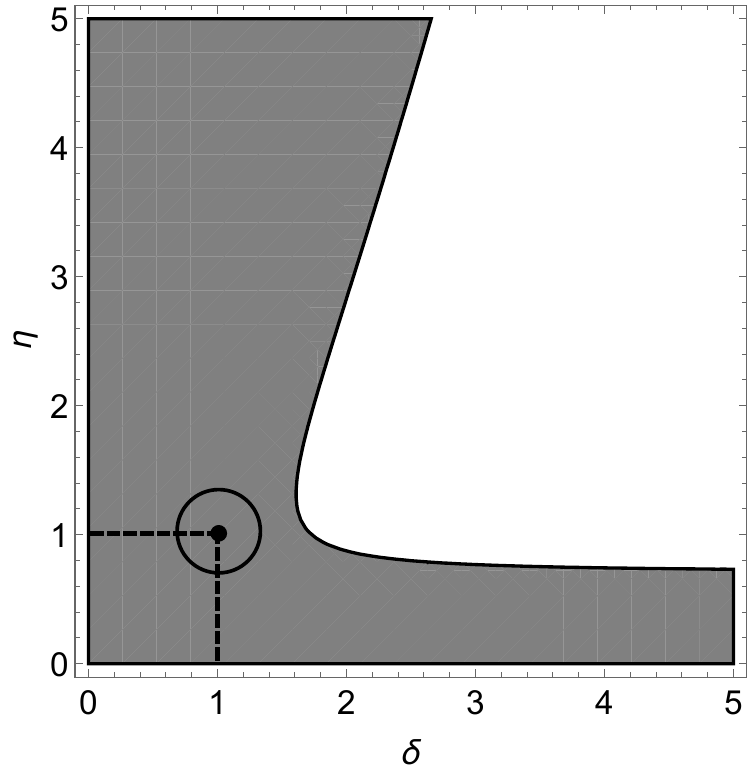}}
\caption{The region in the $(\delta,\eta)$ plan where the hyperbolicity condition (left) and the Baker-Ericksen inequality (right) are satisfied by the SVK model~\eqref{SVK}. Both are verified in a neighbourhood of the reference state $(\delta,\eta)=(1,1)$. The Poisson ratio is $\nu=1/4$ in this picture. }
\label{figura}
\end{center}
\end{figure}

To conclude this section we show that the Baker-Ericksen inequality entails a simple condition for the existence of a global minimum of the stored energy function.

\begin{lemma}\label{wposlemma}
Let $(\prad,\ptan)$ be an hyperelastic constitutive function for spherically symmetric bodies satisfying the strong, respectively weak, Baker-Ericksen inequality. 
Then the stored energy function satisfies
\[
\widehat{w}(\delta,\eta)\geq\widehat{w}(\delta,\delta),\quad \text{for all $(\delta,\eta)\in (0,\infty)^2$, respectively for all $\eta\geq\delta>0$.}
\]
\end{lemma}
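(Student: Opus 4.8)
The plan is to reduce the claimed inequality to a statement about the monotonicity of $\eta\mapsto\widehat{w}(\delta,\eta)$ at fixed $\delta$, and then obtain it by a one-dimensional integration. The starting point is the second relation in~\eqref{hyperelasticconst}, which gives
\[
\ptan(\delta,\eta)-\prad(\delta,\eta)=\tfrac{3}{2}\delta\eta\,\partial_\eta\widehat{w}(\delta,\eta).
\]
Since $\delta,\eta>0$ the prefactor $\tfrac{3}{2}\delta\eta$ is strictly positive, so the Baker-Ericksen quotient in~\eqref{BEineq} has the same sign as $\partial_\eta\widehat{w}(\delta,\eta)/(\eta-\delta)$. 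Thus the strong Baker-Ericksen inequality is equivalent to $\partial_\eta\widehat{w}(\delta,\eta)\geq0$ whenever $\eta>\delta$ and $\partial_\eta\widehat{w}(\delta,\eta)\leq0$ whenever $\eta<\delta$, while the weak version yields only the first of these, for $\eta\geq\delta$.

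First I would treat the strong case. Fixing $\delta>0$ and regarding $\widehat{w}(\delta,\cdot)$ as a $C^1$ function of $\eta$, the sign information above says that this function is non-increasing on $(0,\delta]$ and non-decreasing on $[\delta,\infty)$, so it attains a global minimum at $\eta=\delta$. Concretely, for $\eta>\delta$ I would write
\[
\widehat{w}(\delta,\eta)-\widehat{w}(\delta,\delta)=\int_\delta^\eta\partial_\eta\widehat{w}(\delta,s)\,ds\geq0,
\]
and for $\eta<\delta$ the same integral representation (now with $\partial_\eta\widehat{w}\leq0$ on $(\eta,\delta)$) gives $\widehat{w}(\delta,\delta)-\widehat{w}(\delta,\eta)\leq0$; in both cases $\widehat{w}(\delta,\eta)\geq\widehat{w}(\delta,\delta)$, which is the strong conclusion.

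For the weak case the argument is identical but only the branch $\eta\geq\delta$ is available: the weak Baker-Ericksen inequality supplies $\partial_\eta\widehat{w}(\delta,s)\geq0$ for $s\geq\delta$, and integrating from $\delta$ to $\eta\geq\delta$ yields $\widehat{w}(\delta,\eta)\geq\widehat{w}(\delta,\delta)$ for all $\eta\geq\delta>0$.

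There is no serious obstacle here; the proof is essentially sign bookkeeping followed by integration in the single variable $\eta$. The only points requiring care are keeping track of the orientation of the integration when $\eta<\delta$ (so that the non-positivity of $\partial_\eta\widehat{w}$ yields the inequality in the correct direction) and noting that the regularity assumed in Definition~\ref{elasticmaterialmodeldef} ($\widehat{w}\in C^3$, hence $C^1$ in $\eta$) is what legitimizes the fundamental-theorem-of-calculus step.
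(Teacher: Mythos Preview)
Your proof is correct and follows essentially the same approach as the paper: using the hyperelastic relation~\eqref{hyperelasticconst} to translate the Baker--Ericksen inequality into the sign of $\partial_\eta\widehat{w}$, and then reading off the monotonicity of $\eta\mapsto\widehat{w}(\delta,\eta)$ on each side of $\eta=\delta$. The paper's proof just states this observation in one sentence without writing out the integrals.
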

\begin{proof}
The proof is straightforward, because the inequality~\eqref{BEineq} applied to hyperelastic materials is equivalent to the property that, for all given $\delta>0$, the function $\eta\to\widehat{w}(\delta,\eta)$ is non-increasing for $\eta<\delta$ and non-decreasing for $\eta>\delta$.
\end{proof}

Lemma~\ref{wposlemma} reduces the problem of proving the non-negativity of the stored energy function $\widehat{w}$ to the much simpler problem of proving the same property for the function $F$ of one variable given by $F(\delta)=\widehat{w}(\delta,\delta)$.

\subsection{Constant boundary shear condition}\label{remhomo}

In this paper we are particularly interested in (self-gravitating) elastic balls with constant boundary shear, i.e., elastic balls for which there exists a constant $y_\mathrm{b}\geq0$ such that
\begin{equation}\label{CBScond}
y(t,R(t))=y_\mathrm{b}
\end{equation} 
for all times. As we shall see, the constant boundary shear condition generalizes to the elastic case the property that the mass density vanishes on the boundary of polytropic fluid balls (surrounded by vacuum). We show now that~\eqref{CBScond} is also necessary for the existence of homologous motions for elastic balls.
A ball of matter $(\rho,p_\mathrm{rad},p_\mathrm{tan},u)$ is said to be in homologous motion
if the velocity field $u(t,r)$ has the form 
\begin{equation}\label{ansatzu}
u(t,r)=\frac{\dot{\omega}(t)}{\omega(t)}r,\quad t\geq 0,\ 0\leq r\leq R(t),
\end{equation}
for some $C^1$ function $\omega$ such that $\omega(0)=1$. 
Replacing~\eqref{ansatzu} into~\eqref{continuityeq} we obtain 
\[
\delta(t,r)=\frac{1}{\omega(t)^3}\delta_0\left(\frac{r}{\omega(t)}\right),\quad \eta(t,r)=\frac{1}{\omega(t)^3}\eta_0\left(\frac{r}{\omega(t)}\right),\quad \eta_0(r)=\frac{3}{r^3}\int_0^r\delta_0(s)s^2\,ds,
\]
where $\delta_0=\delta(0,r)$. 
The boundary condition $\dot{R}(t)=u(t,R(t))$ defines the radius of the ball as $R(t)=Z\omega(t)$, where $Z>0$ is a constant. Hence 
\[
y(t,R(t))=\frac{\delta(t,R(t))}{\eta(t,R(t))}=\frac{\delta_0(Z)}{\eta_0(Z)}=y_\mathrm{b},
\]
and thus, as claimed, the boundary of elastic balls in homologous motion must have constant shear. 

The constitutive function for the radial pressure must be consistent with~\eqref{CBScond} in order for elastic balls with constant boundary shear to satisfy the boundary condition $p_\mathrm{rad}(t,R(t))=0$. As
\[
p_\mathrm{rad}(t,R(t))=\mathcal{P}+\prad(y_\mathrm{b}\eta(t,R(t)),\eta(t,R(t))), \quad\eta(t,R(t))=\frac{3M}{4\pi\mathcal{K}}R(t)^{-3},
\]
where $M$ is the total mass of the ball, a necessary condition on the constitutive function $(\prad,\ptan)$ for the existence of elastic balls with constant boundary shear is the following.
\begin{definition}\label{CBSdef}
An elastic constitutive function $(\prad,\ptan)$ for spherically symmetric bodies is said to satisfy the Constant Boundary Shear (CBS) condition if there exists a (possibly not unique) $y_b\geq0$ such that $\prad(y_\mathrm{b}\eta,\eta)$ is constant, or equivalently,
\begin{equation}\label{CBSeq}
(y_\mathrm{b}\partial_\delta\prad+\partial_\eta\ptan)(y_\mathrm{b}\eta,\eta)=0,\quad\text{for all $\eta>0$.}
\end{equation}
\end{definition}
The CBS condition is only necessary and by no means sufficient for the existence of elastic balls with constant boundary shear. An additional necessary condition is that the reference pressure in the equation of state~\eqref{EOS} of the elastic ball is given by
\begin{equation}\label{eqP}
\mathcal{P}=-\prad(y_\mathrm{b},1).
\end{equation}
Globally strictly hyperbolic fluid constitutive functions $\widehat{p}$ satisfy the CBS condition with $y_\mathrm{b}=0$ and thus in this case the reference pressure~\eqref{eqP} is $\mathcal{P}=-\widehat{p}(0)$. For polytropic fluid balls this leads to the choice $\mathcal{P}=\kappa/\gamma$, which characterizes the equation of state of polytropic fluid balls, see~\eqref{constpolyfluid}. 


{\it Remark.} By~\eqref{functions}, the deformation function on the boundary of an elastic ball with constant boundary shear satisfies 
\begin{equation}\label{BC}
\psi(t,Z)-y_\mathrm{b}Z\partial_z\psi(t,Z)=0,
\end{equation}
where $Z$ is the constant radius of the ball in the Lagrangian space, see Section~\ref{lagsec}.

\subsection{Remarks on static self-gravitating elastic balls}\label{remstatic}
This section contains some general remarks on static self-gravitating elastic balls with strongly regular center. 
A spherically symmetric matter distribution is said to be in static equilibrium if $u=0$ and $(\rho,p_\mathrm{rad},p_\mathrm{tan})$ are time independent. For self-gravitating static balls the system~\eqref{sssystem} becomes
\begin{equation}\label{generalstatic}
p_\mathrm{rad}'=\frac{2}{r}(p_\mathrm{tan}-p_\mathrm{rad})-G\rho\frac{m}{r^2}.
\end{equation} 
When the static ball is elastic,~\eqref{generalstatic} is equivalent to~\eqref{sssystem2} with $u=0$ and $f=-\frac{4\pi G}{3}\mathcal{K}r\eta$, namely
\begin{equation}\label{staticsystem}
\widehat{a}(\delta,\eta)\delta'=\frac{\widehat{b}(\delta,\eta)}{r}(\eta-\delta)- \frac{4\pi G}{3}\mathcal{K}^2r\,\eta\,\delta,\quad \eta(r)=\frac{3}{r^3}\int_0^r\delta(s)s^2\,ds.
\end{equation}

A solution $\delta$ of~\eqref{staticsystem} in the interval $[0,R)$ will be called regular if  $\delta\in C^0([0,R))\cap C^1((0,R))$ and $\delta(r)>0$, for all $r\in [0,R)$; the identity $\eta(0)=\delta(0)$ holds for these solutions. If in addition $\delta\in C^1([0,R))$ and $\lim_{r\to 0^+}\delta'(r)=0$, then $\delta$ is said to be a strongly regular solution. 

\begin{lemma}\label{propostatic}
Let $\delta$ be a a regular solution of~\eqref{staticsystem} defined on the interval $[0,R)$, $R\in(0,\infty]$. 
\begin{itemize}
\item[(i)] If $\widehat{a}(\delta(0),\delta(0))\neq 0$, then $\delta$ is strongly regular and $|\delta'(r)|\leq C r$, for a positive constant $C>0$ and $r<1$. Moreover $\delta\in C^2([0,R))$ and at $r=0^+$  there holds
\[
\delta''(0)=-\frac{4\pi G}{3}\mathcal{K}^2\frac{\delta(0)^2}{a(\delta(0),\delta(0))},\quad\eta''(0)=\frac{3}{5}\delta''(0).
\]
\item[(ii)] If $\widehat{a}(\delta,\delta)>0$, for all $\delta>0$, then $\eta(r)>\delta(r)$ and $\eta'(r)<0$, for all $r\in (0,R)$.
\item[(iii)] If $\widehat{a}(\delta,\delta)>0$, for all $\delta>0$, and the constitutive function satisfies the weak Baker-Ericksen inequality, then 
\begin{equation}\label{pradptan*}
\ptan(\delta(r),\eta(r))\geq\prad(\delta(r),\eta(r)),
\end{equation}
for all $r>0$. 
\end{itemize}
\end{lemma}
\begin{proof}
(i) The claim that $\delta$ is strongly regular and the bound on $\delta'$ are proved in~\cite[Thm.~1]{AC19}. As to the second statement, it is clear that $\delta$ is $C^2$ for $r>0$, hence we only need to check that $\delta''$ extends continuously at $r=0$ (from the right). For this purpose it can be assumed that $(\delta,\eta)$ lies within an arbitrarily small disk $D$ centered in $(\delta(0),\eta(0)=\delta(0))$.  Then, by Taylor expanding the right hand side of~\eqref{staticsystem}, we obtain
\[
\widehat{a}(\delta(r),\eta(r))\delta'(r)=\mathcal{R}(\delta(r),\eta(r))\frac{(\delta(r)-\eta(r))^2}{r}-\frac{4\pi G}{3}\mathcal{K}^2\delta(r)\eta(r) r,
\]
where $\mathcal{R}(\delta,\eta)$ is bounded in $D$ and thus in particular $\mathcal{R}(\delta(r),\eta(r))$ is bounded for $r$ small. Dividing by $r$ and taking the limit $r\to 0^+$, using that $(\delta(r)-\eta(r))^2/r^2\to 0$ as $r\to 0^+$, we find
\[
\delta''(0)=\lim_{r\to0^+}\frac{\delta'(r)}{r}=-\frac{4\pi G}{3}\mathcal{K}^2\frac{\delta(0)^2}{a(\delta(0),\delta(0))}.
\]
Moreover by the second equation in~\eqref{staticsystem} we have
\[
\frac{\eta'(r)}{r}=\frac{3\delta(r)}{r^2}-\frac{9\int_0^r\delta(s)s^2}{s^5}.
\]
Replacing $\delta(r)=\delta(0)+\frac{1}{2}\delta''(0)r^2+o(r^2)$, carrying out the integral and taking the limit $r\to 0^+$ we find
\[
\eta''(0)=\lim_{r\to0+}\frac{\eta'(r)}{r}=\frac{3}{5}\delta''(0).
\]
(ii) By (i), $v(r)=\eta(r)-\delta(r)$ is increasing for small $r>0$, hence $v(r)>0$ holds for small radii. Let $r_*>0$ be the supremum radius in the interval $(0,R)$ for which $v(r)>0$ for $r\in(0,r_*)$. If $r_*<R$, then $v(r_*)=0$ and $v'(r_*)\leq 0$. However this is not possible because
\begin{align*}
\lim_{r\to r_*^-}\widehat{a}(\delta(r),\eta(r))v'(r)&=-\widehat{a}(\delta(r_*),\eta(r_*))\left(3\frac{\eta(r_*)-\delta(r_*)}{r_*}\right)\\
&\quad-\frac{\widehat{b}(\delta(r_*),\eta(r_*))}{r_*}(\eta(r_*)-\delta(r_*))+\frac{4\pi}{3}\mathcal{K}^2r_*\,\eta(r_*)\,\delta(r_*)\\
&=\frac{4\pi G}{3}\mathcal{K}^2r_*\,\eta(r_*)^2>0,
\end{align*}
where for the second equality we used that $\delta(r_*)=\eta(r_*)$. Hence $\eta(r)>\delta(r)$, and thus $\eta'(r)<0$, must hold for $r\in(0,R)$. 

(iii) Since the weak Baker-Ericksen inequality means that~\eqref{BEineq} is satisfied for all $\eta\geq\delta$, then~\eqref{pradptan*} follows from (ii). 
\end{proof}
{\it Remark.} Thanks to the result in Lemma~\ref{propostatic}(iii), when the constitutive function satisfies the weak Baker-Ericksen inequality the positivity of the tangential pressure in the interior of static balls follows from the positivity of the radial pressure. 
%

In Section~\ref{staticsec} it will be shown that static self-gravitating elastic balls can be constructed by truncating strongly regular solutions of~\eqref{staticsystem} at the first radius $R>0$  where $p_\mathrm{rad}(r)=\mathcal{P}+\prad(\delta(r),\eta(r))$ vanishes. 
It is convenient to distinguish two types of static self-gravitating balls.
\begin{definition}\label{typeballdef}
Let $(\rho,p_\mathrm{rad},p_\mathrm{tan})$ be a static self-gravitating elastic ball obtained by truncating a strongly regular solution of~\eqref{staticsystem} with maximal interval of existence $[0,R_\mathrm{max})$. If $R_\mathrm{max}<\infty$, $(\rho,p_\mathrm{rad},p_\mathrm{tan})$ is said to be of type $\mathrm{A}$, while if $R_\mathrm{max}=\infty$ then $(\rho,p_\mathrm{rad},p_\mathrm{tan})$ is said to be of type $\mathrm{B}$.
\end{definition}
{\it Remark.} 
The distinction between types $\mathrm{A}$ and $\mathrm{B}$ may be important to determine the number of shells which can surround a static elastic ball if the different bodies are all made of the same elastic material. In~\cite{AC18} it was shown that in the particular case of the Seth elastic model, for which all static self-gravitating balls are of type $\mathrm{B}$, there is no limit on the number of shells that can form around the ball. An interesting open question is whether the same is possible for static self-gravitating elastic balls of type $\mathrm{A}$. In the polytropic fluid case, only type $\mathrm{A}$ are admissible and no shell can form around static self-gravitating balls, because the interior pressure is a decreasing function of the radius.

\subsection{Final comments on the constitutive function of elastic bodies}\label{finalcom}
This section contains a list of three properties which, in the author opinion, should be satisfied by elastic material models in the applications to the problem of self-gravitating bodies. The following list of conditions is inspired partially by physical considerations and mostly by the wish to avoid unnecessary mathematical complications not related to the problem. 
\begin{enumerate}
\item {\it The constitutive function should not depend on additional material parameters other than the bulk modulus $\kappa$ and the Poisson ratio $\nu$}. Some of the elastic models used in materials science, e.g. the Signorini model, do not satisfy this condition, see~\cite{ACA}. It should be emphasized that the extra parameters in these models are not proper material constants because their value depends on having assumed a specific constitutive function for the body. While considering additional phenomenological parameters may be needed for engineering applications, it does not seem a priority at the present stage in the analysis of self-gravitating elastic bodies.
\item {\it The constitutive function should be globally strictly hyperbolic.}  When the material model is not globally strictly hyperbolic the existence theory for the equations of motion of elastic bodies becomes considerably more technical, since one has to show that solutions launched by initial data in the hyperbolicity region $\mathcal{H}=\{(\delta,\eta):\partial_\delta\prad(\delta,\eta)>0\}$ of the state space remain in this region for all times. As the region $\mathcal{H}$ is known explicitly only for rather simple constitutive functions, this task is often very hard, if not impossible, to be done analytically.  It turns out that many popular elastic models found in the literature, such as the Saint Venant-Kirchhoff model, the Hadamard model, and others, do not satisfy this condition, see~\cite{AC19}.
\item {\it The constitutive function should satisfy the Baker-Ericksen inequality and (in the hyperelastic case) the non-negative stored energy condition}. The are many constitutive inequalities proposed in the literature, some of which however have unclear physical interpretation (e.g., polyconvexity~\cite{Ball}). An important condition on elastic constitutive functions, and with a clear physical meaning, is the Baker-Ericksen inequality, see Definition~\ref{BEdef}. This inequality has already played a role in Lemma~\ref{propostatic} and will show up again in the following sections. 
\end{enumerate}

In the next section a new four-parameters family of elastic models is introduced that satisfies the properties 1-3 and which at the same time may be seen as a natural generalisation of the polytropic fluid constitutive function~\eqref{constpolyfluid}.

 \section{Polytropic hyperelastic constitutive function}\label{polysec}
The ultimate goal of this section is to justify the definition of polytropic elastic constitutive function for spherically symmetric bodies, see Definition~\ref{polytropicdef} below. To this regard we remark that the polytropic fluid constitutive function~\eqref{constpolyfluid} is uniquely characterised by the scaling invariance property 
\begin{equation}\label{scaleinvfluid}
\varepsilon^{1-\gamma}\widehat{p}\,'(\varepsilon\delta)=\widehat{p}\,'(\delta), \quad\text{for all $\varepsilon,\gamma>0$}.
\end{equation}
The identity~\eqref{scaleinvfluid} entails that the Euler equations are scale invariant and thus it is the reason behind the existence of self-similar solutions. 
As~\eqref{scaleinvfluid} plays a fundamental role in the mathematical and physical study of fluid dynamics, it is natural to look for an elastic constitutive function that satisfies a similar scaling property.
The minimal scaling condition on $(\prad,\ptan)$ required for the existence of self-similar solutions to~\eqref{sssystem2} (with $f\equiv 0$) is
\begin{equation}\label{scaleinvprop}
\varepsilon^{1-\gamma}\,\widehat{a}(\varepsilon\delta,\varepsilon\eta)=\widehat{a}(\delta,\eta),\quad \varepsilon^{1-\gamma}\,\widehat{b}(\varepsilon\delta,\varepsilon\eta)=\widehat{b}(\delta,\eta),\ \text{for all $\varepsilon,\gamma>0$},
\end{equation}
where $\widehat{a}(\delta,\eta)$ and $\widehat{b}(\delta,\eta)$ are given by~\eqref{abc}. Another property of the polytropic fluid constitutive function that we want to generalize to the elastic case is the constant boundary shear condition, see Definition~\ref{CBSdef}. 

\begin{definition}
An elastic constitutive function $(\prad,\ptan)$ for spherically symmetric bodies is said to be a generalized polytropic constitutive function if:
\begin{enumerate}
\item $(\prad,\ptan)$ is hyperelastic, globally strictly hyperbolic and satisfies~\eqref{scaleinvprop};
\item $\prad$ satisfies the CBS condition~\eqref{CBSeq}, for some $y_\mathrm{b}\geq0$.
\end{enumerate}
An elastic ball with constitutive function $(\prad,\ptan)$ satisfying the conditions 1-2 and with reference pressure $\mathcal{P}=-\prad(y_\mathrm{b},1)$ will be called a generalized polytropic elastic ball.
\end{definition}

\begin{proposition}\label{scaleinvpro}
 Let $f_\mathrm{rad},f_\mathrm{tan},g:(0,\infty)\to\R$ be $C^2$ functions satisfying 
\begin{equation}\label{condF}
f_\mathrm{rad}(1)=f_\mathrm{tan}(1)=0,\quad f'_\mathrm{rad}(1)= f'_\mathrm{tan}(1)=\frac{1-\nu}{1+\nu},\quad g(1)=0,\ g'(1)=1.
\end{equation}
Then 
\begin{subequations}\label{scaleinvconsteq}
\begin{align}
&\kappa^{-1}\prad(\delta,\eta)=3 f_\mathrm{rad}(\delta/\eta)\eta^{\gamma}+g(\eta),\\
&\kappa^{-1}\ptan(\delta,\eta)=-\textfrac{3}{2} f_\mathrm{tan}(\delta/\eta)\eta^{\gamma}+g(\eta)-\textfrac{3}{2}(\eta-\delta)g'(\eta)
\end{align}
\end{subequations}
define an elastic constitutive function for spherically symmetric bodies that satisfies~\eqref{scaleinvprop}. Conversely, any such constitutive function must be of the form~\eqref{scaleinvconsteq} for some $C^2$ functions $f_\mathrm{rad},f_\mathrm{tan},g:(0,\infty)\to\R$ satisfying~\eqref{condF}.
\end{proposition}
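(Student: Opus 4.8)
The plan is to treat the two implications separately, exploiting that the scaling condition~\eqref{scaleinvprop} makes both $\widehat{a}$ and $\widehat{b}$ positively homogeneous of degree $\gamma-1$ in $(\delta,\eta)$; hence each equals $\eta^{\gamma-1}$ times a function of the single ratio $y=\delta/\eta$. For the necessity direction, suppose $(\prad,\ptan)$ is an elastic constitutive function satisfying~\eqref{scaleinvprop}. Writing $\widehat{a}(\delta,\eta)=\eta^{\gamma-1}\Phi(\delta/\eta)$ with $\Phi(y):=\widehat{a}(y,1)$, I would integrate $\partial_\delta\prad=\widehat{a}$ in $\delta$ starting from the diagonal $\delta=\eta$, obtaining
\[
\prad(\delta,\eta)=\eta^{\gamma}F(\delta/\eta)+\prad(\eta,\eta),\qquad F(y):=\int_1^y\Phi(\sigma)\,d\sigma,
\]
so that $F(1)=0$. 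Setting $f_\mathrm{rad}:=\tfrac13\kappa^{-1}F$ and $g(\eta):=\kappa^{-1}\prad(\eta,\eta)$ reproduces the first line of~\eqref{scaleinvconsteq}.

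For the tangential pressure I would solve the second relation in~\eqref{abc} for $\ptan$, namely $\ptan=\prad+\tfrac12(\eta-\delta)\bigl(\widehat{b}-3\partial_\eta\prad\bigr)$, and substitute $\widehat{b}=\eta^{\gamma-1}\Psi(\delta/\eta)$ together with $\partial_\eta\prad$ computed from the display above. The decisive book-keeping point is that $\partial_\eta\prad$ carries the term $\kappa g'(\eta)$, whose contribution to $\tfrac12(\eta-\delta)(\widehat{b}-3\partial_\eta\prad)$ is exactly $-\tfrac32(\eta-\delta)\kappa g'(\eta)$, while every other term is $\eta^{\gamma}$ times a function of $y$ alone; identifying that function with $-\tfrac32\kappa f_\mathrm{tan}$ yields the second line of~\eqref{scaleinvconsteq}. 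The normalizations~\eqref{condF} then fall out by evaluation at the reference state: $f_\mathrm{rad}(1)=0$ and $g(1)=\kappa^{-1}\prad(1,1)=0$ by~\eqref{naturalstate}; differentiating $g(\eta)=\kappa^{-1}\prad(\eta,\eta)$ gives $g'(1)=\kappa^{-1}(\partial_\delta\prad+\partial_\eta\prad)(1,1)=1$ by~\eqref{lincomp1}; $f'_\mathrm{rad}(1)=\tfrac13\kappa^{-1}\partial_\delta\prad(1,1)=\tfrac{1-\nu}{1+\nu}$; $f_\mathrm{tan}(1)=0$ follows from the isotropy condition $\prad(\delta,\delta)=\ptan(\delta,\delta)$ in~\eqref{regularcentercond}; and $f'_\mathrm{tan}(1)=\tfrac{1-\nu}{1+\nu}$ follows by comparing $\partial_\delta\ptan(1,1)$ with~\eqref{lincomp2}.

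For the converse, given $C^2$ functions satisfying~\eqref{condF} I would define $(\prad,\ptan)$ by~\eqref{scaleinvconsteq} and verify the two requirements directly. Differentiation gives $\widehat{a}=3\kappa f'_\mathrm{rad}(y)\eta^{\gamma-1}$ and, after the $g'$-terms cancel, $\widehat{b}=\eta^{\gamma-1}$ times a function of $y$ only; both are manifestly homogeneous of degree $\gamma-1$, so~\eqref{scaleinvprop} holds. That $(\prad,\ptan)$ is an elastic constitutive function in the sense of Definition~\ref{elasticmaterialmodeldef} reduces to evaluation at $(1,1)$: the four linearization identities~\eqref{lincomp1}--\eqref{lincomp2} and $\prad(1,1)=\ptan(1,1)=0$ all follow from~\eqref{condF}, while the two center conditions in~\eqref{regularcentercond} follow from $f_\mathrm{rad}(1)=f_\mathrm{tan}(1)=0$ and from $f'_\mathrm{rad}(1)=f'_\mathrm{tan}(1)$, respectively.

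I expect the main obstacle to be the treatment of $\widehat{b}$, which through~\eqref{abc} involves the difference quotient $(\ptan-\prad)/(\eta-\delta)$. In the sufficiency direction one must confirm that this quotient stays bounded as $\eta\to\delta$; a short computation shows its numerator is proportional to $\bigl(f_\mathrm{tan}(y)+2f_\mathrm{rad}(y)\bigr)\eta^{\gamma}$, so that the apparent singularity at $y=1$ is removable precisely because $f_\mathrm{tan}(1)+2f_\mathrm{rad}(1)=0$, a consequence of~\eqref{condF}. The structural reason behind the particular $g$-dependence in~\eqref{scaleinvconsteq} is that the map $(\prad,\ptan)\mapsto(\prad+\kappa g,\ \ptan+\kappa g-\tfrac32(\eta-\delta)\kappa g')$ has exactly the shape of the dynamical-equivalence modification of Lemma~\ref{dyneqlemma}; tracking this term correctly is the one place where care is genuinely needed.
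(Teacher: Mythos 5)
Your proposal is correct and follows essentially the same route as the paper: reduce $\widehat{a}$ and $\widehat{b}$ to $\eta^{\gamma-1}$ times functions of $y=\delta/\eta$ via the scaling hypothesis, integrate $\partial_\delta\prad=\widehat{a}$ from the diagonal to recover $f_\mathrm{rad}$ and $g(\eta)=\kappa^{-1}\prad(\eta,\eta)$, solve the $\widehat{b}$-relation in~\eqref{abc} for $\ptan$, and read off~\eqref{condF} from~\eqref{condpress} and~\eqref{regularcentercond}. The only cosmetic difference is that the paper extracts the homogeneous form of $\widehat{a},\widehat{b}$ by differentiating the scaling identity at $\varepsilon=1$ and solving the resulting Euler equation, whereas you set $\varepsilon=1/\eta$ directly; the substance is identical.
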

\begin{proof}
It is straightforward to verify that the functions~\eqref{scaleinvconsteq} satisfy the properties in Definition~\ref{elasticmaterialmodeldef}, as well as the scaling identity~\eqref{scaleinvprop}, for all given $C^2$ functions $f_\mathrm{rad},f_\mathrm{tan}, g$ verifying~\eqref{condF}. Conversely, assume that $(\prad,\ptan)$ is a constitutive function that satisfies~\eqref{scaleinvprop}. Then
\[
\left(\frac{d}{d\varepsilon}\varepsilon^{1-\gamma}\,\widehat{a}(\varepsilon\delta,\varepsilon\eta)\right)_{\varepsilon=1}=(1-\gamma) \widehat{a}(\delta,\eta)+\delta\partial_\delta \widehat{a}(\delta,\eta)+\eta\partial_\eta \widehat{a}(\delta,\eta)=0,
\]
hence $\widehat{a}(\delta,\eta)=3\kappa(1-\nu)\delta^{\gamma-1}h(\delta/\eta)/(1+\nu)$, where $h$ is an arbitrary $C^1$ function which, by the first equation in~\eqref{lincomp1}, satisfies $h(1)=1$. It follows that 
\[
\kappa^{-1}\prad(\delta,\eta)=3 f_\mathrm{rad}(\delta/\eta)\eta^{\gamma}+g(\eta),
\]
where
\[
f_\mathrm{rad}(y)=\frac{1-\nu}{1+\nu}\int_1^{y}z^\gamma h(z)\,dz,\quad g(\eta)=\kappa^{-1}\prad(\eta,\eta).
\]
Thus $f_\mathrm{rad}(1)=0$, $f'_\mathrm{rad}(1)=\frac{1-\nu}{1+\nu}$ and, by the second equation in~\eqref{lincomp1}, $g'(1)=1$. 
Likewise
\[
\left(\frac{d}{d\varepsilon}\varepsilon^{1-\gamma}\,\widehat{b}(\varepsilon\delta,\varepsilon\eta)\right)_{\varepsilon=1}=(1-\gamma) \widehat{b}(\delta,\eta)+\delta\partial_\delta \widehat{b}(\delta,\eta)+\eta\partial_\eta \widehat{b}(\delta,\eta)=0
\]
implies $\widehat{b}(\delta,\eta)=3\kappa(1-\nu)\delta^{\gamma-1} s(\delta/\eta)/(1+\nu)$, where $s$ is $C^1$ and $s(1)=0$ by~\eqref{bzero}. By the definition~\eqref{abc} of $\widehat{b}$ this entails that $\ptan$ has the form stated in~\eqref{scaleinvconsteq} with 
\[
f_\mathrm{tan}(y)=[3\gamma(1-y)-2]f_\mathrm{rad}(y)+3y(y-1)f'_\mathrm{rad}(y)+\frac{1-\nu}{1+\nu}(y-1)y^{\gamma-1} s(y).
\]
As $f_\mathrm{rad}(1)=0$ and $f'_\mathrm{tan}(1)=f'_\mathrm{rad}(1)$, the proof is completed.
\end{proof}
The elastic constitutive function~\eqref{scaleinvconsteq} contains the three arbitrary functions $f_\mathrm{rad}, f_\mathrm{tan},g$ and thus comprises a huge class of models.
However using Lemma~\ref{dyneqlemma} with $q(\eta)=g(\eta)+1-\eta$ we obtain that all these constitutive functions are dynamically equivalent when $f_\mathrm{rad}$ and $f_\mathrm{tan}$ are fixed. It is easy to see that there is at most one representative in this dynamical equivalent class that could satisfy the constant shear condition. In fact, assuming that there exists $y_\mathrm{b}$ such that $y_\mathrm{b}\partial_\delta\prad(y_\mathrm{b}\eta,\eta)+\partial_\eta\prad(y_\mathrm{b}\eta,\eta)=0$, for all $\eta>0$, then necessarily $g'(\eta)=-3\gamma f_\mathrm{rad}(y_\mathrm{b})\eta^{\gamma-1}$ and thus the conditions $g(1)=0, g'(1)=1$ give
\begin{equation}\label{ybeq}
f_\mathrm{rad}(y_\mathrm{b})=-(3\gamma)^{-1},\quad g(\eta)=\frac{\eta^\gamma-1}{\gamma}.
\end{equation} 
Hence any generalised elastic polytropic constitutive function has to have to form
\begin{subequations}\label{scaleinvconsteq2}
\begin{align}
&\kappa^{-1}\prad(\delta,\eta)=3 f_\mathrm{rad}(\delta/\eta)\eta^{\gamma}+\frac{\eta^{\gamma}-1}{\gamma},\\
&\kappa^{-1}\ptan(\delta,\eta)=-\frac{3}{2}(f_\mathrm{tan}(\delta/\eta)+1-\delta/\eta)\eta^{\gamma}+\frac{\eta^{\gamma}-1}{\gamma},
\end{align}
\end{subequations}
where  $\gamma> 0$ and $f_\mathrm{rad},f_\mathrm{tan}$ are $C^2$ functions that satisfy~\eqref{condF}. Furthermore the constitutive function~\eqref{scaleinvconsteq2} is globally strictly hyperbolic if and only if $f'_\mathrm{rad}(y)>0$, for all $y>0$, and thus if a solution of $f_\mathrm{rad}(y_\mathrm{b})=-(3\gamma)^{-1}$ exists then it is necessarily unique. We also obtain that the reference pressure of generalized polytropic elastic balls must be given by $\mathcal{P}=-\prad(y_\mathrm{b},1)=\kappa/\gamma$, as in the polytropic fluid case, see~\eqref{constpolyfluid}.

The number of free functions in~\eqref{scaleinvconsteq2} can be further reduced by imposing that the constitutive function should be hyperelastic.
\begin{proposition}\label{polypro}
The constitutive function~\eqref{scaleinvconsteq2} is hyperelastic if and only if 
\begin{equation}\label{hypercond}
f_\mathrm{tan}(y)=f_\mathrm{rad}(y)+3(1-\gamma) y \int_1^y\frac{f_\mathrm{rad}(s)}{s^2}\,ds.
\end{equation}
When~\eqref{hypercond} holds, the stored energy function is given by
\begin{equation}\label{scaleinvariantw}
\kappa^{-1}\widehat{w}(\delta,\eta)=\left\{\begin{array}{ll}{\displaystyle \eta^{\gamma-1}\left(3S(\delta/\eta)-\frac{1}{\gamma}(\delta/\eta)^{-1}+\frac{1}{\gamma-1}\right)+\frac{\delta^{-1}}{\gamma}-\frac{1}{(\gamma-1)} }& 
\text{for $\gamma\neq1$}\\[0.5cm]
3S(\delta/\eta)-(\delta/\eta)^{-1}-\log(\delta/\eta) +\log\delta+\delta^{-1}& \text{for $\gamma=1$}\end{array}\right.
\end{equation}
where
\[
S(y)=\int_1^y\frac{f_\mathrm{rad}(s)}{s^2}\,ds.
\]
Conversely, given any $C^3$ function $S:(0,\infty)\to\R$ such that
\begin{equation}\label{condG}
S(1)=S'(1)=0,\quad S''(1)=\frac{1-\nu}{1+\nu},
\end{equation} 
\eqref{scaleinvariantw} is the stored energy function for the elastic constitutive function~\eqref{scaleinvconsteq2} where $f_\mathrm{tan}(y)$ is given by~\eqref{hypercond} and 
\begin{equation}\label{Fhyper}
f_\mathrm{rad}(y)=y^2S'(y).
\end{equation}
\end{proposition}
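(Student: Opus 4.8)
The plan is to exploit the exactness criterion~\eqref{equivhyper}: the constitutive function is hyperelastic precisely when the $1$-form $h_1\,d\delta+h_2\,d\eta$ is exact, and since $(0,\infty)^2$ is simply connected this is equivalent to $\partial_\eta h_1=\partial_\delta h_2$. Rather than checking closedness blindly, I would integrate the first of the two equivalent equations $\partial_\delta\widehat{w}=h_1$, $\partial_\eta\widehat{w}=h_2$ (recall from~\eqref{hyperelasticconst} that $h_1=\delta^{-2}\prad$ and $h_2=\frac{2}{3}(\delta\eta)^{-1}(\ptan-\prad)$), and read off both the compatibility relation and the potential $\widehat{w}$ in one stroke.

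First I would insert~\eqref{scaleinvconsteq2} and integrate $\kappa^{-1}\partial_\delta\widehat{w}=\kappa^{-1}h_1$ in $\delta$ at fixed $\eta$. Writing $y=\delta/\eta$ and substituting $\delta=y\eta$, the term $3f_\mathrm{rad}(y)\eta^\gamma\delta^{-2}$ integrates to $3\eta^{\gamma-1}S(\delta/\eta)$ with $S(y)=\int_1^y f_\mathrm{rad}(s)s^{-2}\,ds$, while $\frac{\eta^\gamma-1}{\gamma}\delta^{-2}$ integrates to $-\frac{\eta^\gamma-1}{\gamma}\delta^{-1}$, giving
\[
\kappa^{-1}\widehat{w}(\delta,\eta)=3\eta^{\gamma-1}S(\delta/\eta)-\frac{\eta^\gamma-1}{\gamma}\delta^{-1}+C(\eta),
\]
where $C(\eta)$ is the $\delta$-integration constant. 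No hyperelasticity has been used yet; the content is to demand that this same $\widehat{w}$ also satisfy $\kappa^{-1}\partial_\eta\widehat{w}=\kappa^{-1}h_2$.

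Next I would differentiate the displayed $\widehat{w}$ in $\eta$, using $S'(y)=f_\mathrm{rad}(y)/y^2$ to collapse the $S'$ contribution, and compute $\kappa^{-1}h_2$ from~\eqref{scaleinvconsteq2}, noting that the additive $\frac{\eta^\gamma-1}{\gamma}$ terms cancel in $\ptan-\prad$. Equating the two expressions and solving for $C'(\eta)$ produces an identity of the shape $C'(\eta)=\eta^{\gamma-2}G(\delta/\eta)$, where $G$ gathers all the $y$-dependent terms. The crux of the argument is that the left-hand side depends on $\eta$ only while the right-hand side is $\eta^{\gamma-2}$ times a function of $y=\delta/\eta$ alone; fixing $\eta$ and letting $\delta$ range forces $G$ to be constant. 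Evaluating $G$ at $y=1$ with $f_\mathrm{rad}(1)=f_\mathrm{tan}(1)=S(1)=0$ shows this constant equals $1$, and imposing $G\equiv1$ reduces, after clearing the factor $y$ and cancelling, to exactly $f_\mathrm{tan}(y)=f_\mathrm{rad}(y)+3(1-\gamma)yS(y)$, i.e.~\eqref{hypercond}. This establishes the equivalence, closedness being possible exactly when~\eqref{hypercond} holds.

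Finally, with $G\equiv1$ the equation $C'(\eta)=\eta^{\gamma-2}$ integrates to $C(\eta)=\eta^{\gamma-1}/(\gamma-1)$ for $\gamma\neq1$ and $C(\eta)=\log\eta$ for $\gamma=1$, the remaining additive constants being fixed by $\widehat{w}(1,1)=0$; substituting back and using $\eta^\gamma\delta^{-1}=\eta^{\gamma-1}(\delta/\eta)^{-1}$ reproduces the two branches of~\eqref{scaleinvariantw}. For the converse, given $S$ with~\eqref{condG} I would define $f_\mathrm{rad}$ by~\eqref{Fhyper} and $f_\mathrm{tan}$ by~\eqref{hypercond}; then $\int_1^y f_\mathrm{rad}(s)s^{-2}\,ds=S(y)$, and the normalisations~\eqref{condF} follow from~\eqref{condG} (in particular $f_\mathrm{rad}(1)=S'(1)=0$, $f_\mathrm{rad}'(1)=2S'(1)+S''(1)=(1-\nu)/(1+\nu)$, and $f_\mathrm{tan}(1)=0$, $f_\mathrm{tan}'(1)=f_\mathrm{rad}'(1)$), so~\eqref{scaleinvconsteq2} is a genuine scale-invariant constitutive function which, by the forward direction, is hyperelastic with stored energy~\eqref{scaleinvariantw}. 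I expect no real obstacle beyond careful bookkeeping; the one step deserving attention is the separation-of-variables deduction that $G$ is constant, as this is precisely what converts exactness into the explicit relation~\eqref{hypercond}.
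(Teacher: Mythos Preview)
Your proposal is correct and follows essentially the same route as the paper: both integrate $\delta^{-2}\prad$ in $\delta$ to obtain $\widehat{w}$ up to a function of $\eta$ alone, then determine that function together with the compatibility relation~\eqref{hypercond} by matching against the $\eta$-derivative condition. The only organizational difference is that you extract~\eqref{hypercond} and $C'(\eta)=\eta^{\gamma-2}$ in one stroke via the separation-of-variables argument, whereas the paper first derives~\eqref{hypercond} directly from the closedness criterion~\eqref{equivhyper} and afterward fixes the integration function $W(\eta)$ from the diagonal condition; the computations are equivalent.
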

\begin{proof}
Using~\eqref{equivhyper}, it is easy to show that the constitutive equation~\eqref{scaleinvconsteq} is hyperelastic if and only if~\eqref{hypercond} holds. The stored energy function is given by
\begin{align}
\kappa^{-1}\widehat{w}({\delta,\eta})&=\kappa^{-1}\int_\eta^\delta \frac{\prad(s,\eta)}{s^2}\,ds+\kappa^{-1}W(\eta)\nonumber\\
&=\eta^{\gamma-1}\left(3S(\delta/\eta)-\frac{1}{\gamma}(\delta/\eta)^{-1}+\frac{1}{\gamma}\right)+\frac{\delta^{-1}-\eta^{-1}}{\gamma}+ \kappa^{-1}W(\eta),\label{test}
\end{align}
where, imposing $\widehat{w}(1,1)=\prad(1,1)=\ptan(1,1)=0$ and $\ptan(\eta,\eta)=\prad(\eta,\eta)$, the integration function $W$ is given by
\begin{align}\label{W}
\kappa^{-1}W(\eta)=\left\{\begin{array}{ll}{\displaystyle \frac{1}{\gamma}\left(\frac{\eta^{\gamma-1}-1}{\gamma-1}+\eta^{-1}-1\right)}&\text{if $\gamma\neq 1$}\\[0.5cm]
\log\eta+\eta^{-1}-1& \text{if $\gamma=1$}\end{array}\right..
\end{align}
Replacing~\eqref{W} into~\eqref{test} gives~\eqref{scaleinvariantw}. Finally, given a $C^3$ function $S$ satisfying~\eqref{condG} it is straightforward to verify that the elastic constitutive function associated to the stored energy~\eqref{scaleinvariantw} has precisely the form~\eqref{scaleinvconsteq} where $f_\mathrm{rad},f_\mathrm{tan}$ satisfying~\eqref{condF} are given by~\eqref{Fhyper} and~\eqref{hypercond}.
\end{proof}

So far it has been proved that the stored energy function of any generalized polytropic elastic constitutive function for spherically symmetric bodies is to be of the form~\eqref{scaleinvariantw} for some function $S$ that satisfies~\eqref{condG} and $(y^2S'(y))'>0$, for all $y>0$. The function $S$ will be called the shear function, as it expresses how the stored energy function depends on the shear variable $y=\delta/\eta$.  Next a specific choice will be made for the shear function in such a way that the only freedom left in its definition
consists in fixing the value of a single parameter $\beta$.  

We begin by choosing the shear function to be a linear combination of power-laws, that is 
\[
S(y)=\sum_{j=1}^n\alpha_j y^{\beta_j}.
\] 
This choice seems rather natural for a polytropic model and agrees with the general form of most stored energy functions found in the literature.
By~\eqref{condG} the coefficients $\alpha_1,\dots,\alpha_n$ must satisfy the linear system
\[
\sum_{j=1}^n\alpha_j =0,\quad \sum_{j=1}^n\alpha_j \beta_j=0,\quad \sum_{j=1}^n\alpha_j\beta_j(\beta_j-1) =\frac{1-\nu}{1+\nu}.
\]
If this system has more than one solution, the stored energy function~\eqref{scaleinvariantw}  would depend on additional dimensionless constants besides the Poisson ratio $\nu$, thereby violating Property 1 in Section~\ref{finalcom}. Hence we set $n=3$ and thus obtain the following shear function
\begin{equation}\label{Sgeneral}
S(y)=\frac{1-\nu}{1+\nu}\frac{(\beta_2-\beta_3)y^{\beta_1}-(\beta_1-\beta_3)y^{\beta_2}+(\beta_1-\beta_2)y^{\beta_3}}{(\beta_1-\beta_2)(\beta_1-\beta_3)(\beta_2-\beta_3)},
\end{equation}
where $\beta_1,\beta_2,\beta_3$ are distinct real numbers. 
Replacing~\eqref{Sgeneral} in~\eqref{scaleinvariantw}$_{\gamma\neq1}$ we find that
\begin{equation}\label{prop2}
\widehat{w}(\delta,\eta)= \widehat{w}_\mathrm{pf}(\delta),\text{ for $\nu=1/2$ and $(\beta_1,\beta_2,\beta_3)=(\gamma-1,-1,0)$ (or permutations),}
\end{equation}
where $\widehat{w}_\mathrm{pf}$ is the stored energy function of polytropic fluids, see~\eqref{polystored}. Moreover~\eqref{scaleinvconsteq2} gives
\begin{equation}\label{fradpolyGEN}
f_\mathrm{rad}(y)=y^2S'(y)=\frac{1-\nu}{1+\nu}\frac{\beta_1(\beta_2-\beta_3)y^{\beta_1+1}-\beta_2(\beta_1-\beta_3)y^{\beta_2+1}+\beta_3(\beta_1-\beta_2)y^{\beta_3+1}}{(\beta_1-\beta_2)(\beta_1-\beta_3)(\beta_2-\beta_3)}.
\end{equation}
From this expression it is clear that proving the hyerbolicity condition $f'_\mathrm{rad}(y)>0$, for all $y>0$, and solving~\eqref{ybeq} for the boundary value $y_\mathrm{b}$ of the shear variable is not possible without imposing further conditions on the parameters $\beta_1,\beta_2,\beta_3$. In the following we choose to simplify the model by fixing the value of two of the parameters $\beta_1,\beta_2,\beta_3$ and letting only one be free.
The final condition that we require on the stored energy function is that it should reduce to the polytropic fluid stored energy function in a suitable limit when $\nu=1/2$. 
In view of~\eqref{prop2} this condition is achieved by choosing the fixed parameters in the shear function to be $-1$ and $0$. Denoting the free parameter as $\beta-1$, we obtain the following final form of the shear function
\begin{equation}\label{functionG}
S(y)=\left\{\begin{array}{ll}{\displaystyle \frac{1-\nu}{1+\nu}\left(\frac{y^{\beta-1}}{\beta(\beta-1)}+\frac{y^{-1}}{\beta}-\frac{1}{\beta-1}\right)} &\text{if $\beta\neq 1$}\\[0.5cm]
 {\displaystyle \frac{1-\nu}{1+\nu}\left( \log y+y^{-1}-1\right)} &\text{if $\beta=1$}\end{array}\right.,
\end{equation}
where $\beta\neq 0$; the expression for $\beta=1$ is obtained in the limit of the shear function for $\beta\neq 1$.
With this choice of the shear function, \eqref{fradpolyGEN} simplifies to
\begin{equation}\label{fradpoly}
f_\mathrm{rad}(y)=y^2S'(y)=\frac{1-\nu}{1+\nu}\frac{y^{\beta}-1}{\beta},
\end{equation}
and thus the CBS condition is satisfied with
\begin{equation}\label{ystarpoly}
y_\mathrm{b}=\left[1-\frac{\beta}{3\gamma}\left(\frac{1+\nu}{1-\nu}\right)\right]^{1/\beta},
\end{equation}
provided the expression within the square brackets is non-negative, that is
\begin{equation}\label{ystarpos}
\beta\leq3\frac{1-\nu}{1+\nu}\gamma.
\end{equation}
Note that
$0\leq y_\mathrm{b}<1$, for all $\gamma>0$, $\beta\neq 0$, and that $y_\mathrm{b}=0$ in the fluid limit $(\nu,\beta)=(1/2,\gamma)$.


\begin{table}[h!]
  \begin{center}
\begin{tabular}{l|c}
\hline\\
$ \gamma,\beta$ & $\kappa^{-1}\widehat{w}(\delta,\eta)$\\[0.5cm]
\hline\\
$\gamma\neq1$ & \multirow{2}{*}{${\displaystyle \eta^{\gamma-1}\Big(\frac{3(1-\nu)}{\beta(\beta-1)(1+\nu)}(\delta/\eta)^{\beta-1}+\Big(\frac{3(1-\nu)}{\beta(1+\nu)}-\frac{1}{\gamma}\Big)(\delta/\eta)^{-1}+\frac{1}{\gamma-1}-\frac{3(1-\nu)}{(\beta-1)(1+\nu)}\Big)}$}\\[0.7cm]
$\beta\neq 1$ &\hspace{-10cm} ${\displaystyle+\frac{\delta^{-1}}{\gamma} -\frac{1}{\gamma-1}}$ \\[0.5cm]
\hline\\
$\gamma=1$ &  \multirow{2}{*}{${\displaystyle \frac{3(1-\nu)}{\beta(\beta-1)(1+\nu)}(\delta/\eta)^{\beta-1}+\left(\frac{3(1-\nu)}{\beta(1+\nu)}-1\right)(\delta/\eta)^{-1}-\frac{3(1-\nu)}{(1+\nu)(\beta-1)}-\log(\delta/\eta)}$}\\[0.7cm]
$\beta\neq 1$ &\hspace{-10cm} ${\displaystyle +\delta^{-1}+\log\delta }$\\ [0.5cm]
\hline\\
$\gamma\neq 1$ &  \multirow{2}{*}{${\displaystyle\eta^{\gamma-1}\Big(\Big(\frac{3(1-\nu)}{1+\nu}-\frac{1}{\gamma}\Big)(\delta/\eta)^{-1}+\frac{3(1-\nu)}{1+\nu}(\log(\delta/\eta)-1)+\frac{1}{\gamma-1}\Big)+\frac{\delta^{-1}}{\gamma}-\frac{1}{\gamma-1}}$}\\
$ \beta=1$ & \\[0.5cm]
\hline\\
$\gamma=1$ &  \multirow{2}{*}{${\displaystyle\frac{2(1-2\nu)}{1+\nu}\left((\delta/\eta)^{-1}+\log(\delta/\eta)\right)-\frac{3(1-\nu)}{1+\nu}+\delta^{-1}+\log\delta}$} \\
$\beta=1$ & \\[0.5cm]
\hline
\end{tabular}
 \caption{Polytropic stored energy function. }
    \label{wpoly}
    \end{center}
\end{table}

Using~\eqref{functionG} in~\eqref{scaleinvariantw}, the stored energy function reads as in Table~\ref{wpoly}, depending on the values of the parameters $\gamma,\beta$.
The constitutive function $(\prad,\ptan)$ is given by
\begin{subequations}\label{pradptan}
\begin{align}
&\kappa^{-1}\prad(\delta,\eta)=\frac{3(1-\nu)}{\beta(1+\nu)}\big((\delta/\eta)^\beta-y_\mathrm{b}^\beta\big)\eta^\gamma-\frac{1}{\gamma}\\
&\kappa^{-1}\ptan(\delta,\eta)=\kappa^{-1}\prad(\delta,\eta)+3\big(1-\delta/\eta\big)Q(\delta/\eta)\eta^\gamma,
\end{align}
where 
\begin{equation}\label{Q}
Q(y)=\left\{\begin{array}{ll}{\displaystyle \frac{3(\beta-\gamma)(1-\nu)}{2\beta(\beta-1)(1+\nu)}\frac{1-y^\beta}{1-y}+\frac{3(\gamma-1)(1-\nu)}{2(\beta-1)(1+\nu)}-\frac{1}{2}} & \text{if $\beta\neq 1$}\\[0.5cm]
{\displaystyle \frac{3(\gamma-1)(1-\nu)}{2(1+\nu)}\frac{y\log y}{1-y}+\frac{3\gamma(1-\nu)}{2(1+\nu)}-\frac{1}{2}} & \text{if $\beta=1$}\end{array}\right..
\end{equation}
\end{subequations}
Moreover
\begin{subequations}\label{abpol}
\begin{align}
&\kappa^{-1}\widehat{a}(\delta,\eta)=\kappa^{-1}\partial_\delta\prad(\delta,\eta)=3\frac{1-\nu}{1+\nu}(\delta/\eta)^{\beta-1} \eta^{\gamma-1}\\[0.5cm]
&\kappa^{-1}\widehat{b}(\delta,\eta)=9\frac{1-\nu}{1+\nu}(\beta-\gamma)(1-\delta/\eta)^{-1}B(\delta/\eta)\eta^{\gamma-1},\\[0.5cm]
&\text{where}\quad B(y)=\left\{\begin{array}{ll}{\displaystyle \frac{y+(\beta-1) y^{1+\beta}-\beta y^{\beta}}{\beta(\beta-1)}} & \text{if $\beta\neq 1$}\\[0.5cm] y(y-\log y-1) & \text{if $\beta=1$}\end{array}\right..
\end{align}
\end{subequations}
In particular the constitutive function is globally strictly hyperbolic and, by Lemma~\ref{bfluid}, for $\beta=\gamma$ it is dynamically equivalent to a fluid constitutive function.

Our search for a hyperelastic constitutive function resembling the polytropic fluid model~\eqref{constpolyfluid} ends here and has led to the following definition.

\begin{definition}\label{polytropicdef}
Given $\kappa>0$, $\nu\in(-1,1/2]$,  $\gamma>0$, and $\beta\neq0$ satisfying~\eqref{ystarpos}, the function $(\prad,\ptan)$ defined by~\eqref{pradptan} is called polytropic elastic constitutive function for spherically symmetric bodies with bulk modulus $\kappa$, Poisson ratio $\nu$, polytropic exponent $\gamma$ and shear exponent $\beta$.
\end{definition} 

{\it Remark. }For $\nu=1/2$ and $\beta=\gamma$, the polytropic elastic constitutive function becomes the polytropic fluid model~\eqref{constpolyfluid}, for all $\gamma> 0$, while for $\beta=\gamma$ and for all $\nu\in (-1,1/2)$  it is dynamically equivalent to such fluid. 
In all other cases the constitutive function~\eqref{pradptan} is ``genuinely" elastic.


  
To conclude this section we show that the polytropic elastic constitutive function satisfies the weak Baker-Ericksen inequality and the weak non-negative stored energy condition for all admissible values of the parameters $\gamma,\beta$ and the strong version of these inequalities if and only if $\gamma,\beta$ verify an additional bound. 
\begin{proposition}\label{BEpro}
The polytropic elastic constitutive function~\eqref{pradptan} satisfies the weak Baker-Ericksen inequality and the weak non-negative stored energy condition for all  $\gamma>0$, and $\beta\neq0$ satisfying~\eqref{ystarpos}. If and only if in addition
\begin{equation}\label{condgamma}
\beta\geq\min\left(\gamma,3\frac{1-\nu}{1+\nu}\gamma-\frac{2(1-2\nu)}{1+\nu}\right)=\left\{\begin{array}{ll}\gamma & \text{if $\gamma\geq 1$}\\ {\displaystyle 3\frac{1-\nu}{1+\nu}\gamma-\frac{2(1-2\nu)}{1+\nu}} & \text{if $0<\gamma<1$}\end{array}\right.,
\end{equation}
then~\eqref{pradptan} satisfies the strong Baker-Ericksen inequality and the strong non-negative stored energy condition as well. 
\end{proposition}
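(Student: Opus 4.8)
The plan is to reduce both constitutive inequalities to the sign of the single scalar function $Q$ in~\eqref{pradptan}. From the second equation in~\eqref{pradptan} and the identity $\eta-\delta=\eta(1-\delta/\eta)$ one gets
\[
\frac{\ptan(\delta,\eta)-\prad(\delta,\eta)}{\eta-\delta}=3\kappa\,Q(\delta/\eta)\,\eta^{\gamma-1},
\]
so that, as $\kappa>0$ and $\eta^{\gamma-1}>0$, the Baker-Ericksen inequality~\eqref{BEineq} holds on a set of $(\delta,\eta)$ exactly when $Q(y)\ge0$ on the corresponding set of $y=\delta/\eta$; thus the weak (resp.\ strong) form of Definition~\ref{BEdef} is equivalent to $Q\ge0$ on $(0,1]$ (resp.\ on $(0,\infty)$). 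For the stored energy I would invoke Lemma~\ref{wposlemma}: once the matching Baker-Ericksen inequality is known it gives $\widehat{w}(\delta,\eta)\ge\widehat{w}(\delta,\delta)$, and evaluating~\eqref{scaleinvariantw} at $\eta=\delta$ (where the shear term $S(1)=0$ drops out) yields $\widehat{w}(\delta,\delta)=\widehat{w}_\mathrm{pf}(\delta)$, the polytropic fluid energy~\eqref{polystored}. Since $\widehat{w}_\mathrm{pf}'(\delta)=\tfrac{\kappa}{\gamma}\delta^{-2}(\delta^\gamma-1)$ forces a global minimum at $\delta=1$ with value $0$, one has $\widehat{w}_\mathrm{pf}\ge0$ for every $\gamma>0$, so the non-negative stored energy condition comes for free from the Baker-Ericksen inequality, in both the weak and the strong form. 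Hence the whole proposition reduces to a sign analysis of $Q$.

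The key structural fact is that $Q$ is monotone on $(0,\infty)$. Writing $Q(y)=A\,g(y)+B$ with $g(y)=(1-y^\beta)/(1-y)$ and constants $A=\frac{c(\beta-\gamma)}{\beta(\beta-1)}$, $B=\frac{c(\gamma-1)}{\beta-1}-\tfrac12$, $c=\frac{3(1-\nu)}{2(1+\nu)}>0$, monotonicity of $Q$ follows from that of $g$. The sign of $g'$ equals the sign of $N(y)=1+(\beta-1)y^\beta-\beta y^{\beta-1}$, and since $N(1)=0$ while $N'(y)=\beta(\beta-1)y^{\beta-2}(y-1)$ vanishes only at $y=1$, $N$ has constant sign: $g$ increases for $\beta>1$ or $\beta<0$ and decreases for $0<\beta<1$. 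The borderline $\beta=1$ is handled identically with $g$ replaced by $(y\log y)/(1-y)$, whose derivative numerator is $\log y-y+1\le0$. Being affine in $g$, the function $Q$ is therefore strictly monotone whenever $\beta\neq\gamma$, and constant equal to $\tfrac{1-2\nu}{1+\nu}\ge0$ when $\beta=\gamma$.

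With monotonicity in hand, the sign of $Q$ is governed by its endpoint limits, which I would compute explicitly: $Q(1)=\tfrac{1-2\nu}{1+\nu}\ge0$ for every admissible $\nu$; for $\beta>0$, $Q(0^+)=\tfrac{c\gamma}{\beta}-\tfrac12$, which is $\ge0$ precisely under~\eqref{ystarpos}, while $Q(0^+)=+\infty$ for $\beta<0$. For the weak statement this already closes the argument: on $(0,1]$ the monotone $Q$ has both endpoint values non-negative, so $Q\ge0$ throughout, giving the weak Baker-Ericksen inequality, and hence the weak stored energy condition, for all $\gamma>0$ and $\beta\neq0$ satisfying~\eqref{ystarpos}.

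For the strong statement the remaining ingredient is the limit $y\to\infty$. Using $g(\infty)=0$ for $\beta<1$ and $g(\infty)=+\infty$ for $\beta>1$ (and $(y\log y)/(1-y)\to-\infty$ for $\beta=1$), one finds $Q(\infty)=+\infty$ when $\beta\ge1$ and $\beta>\gamma$, $Q(\infty)=-\infty$ when $\beta\ge1$ and $\beta<\gamma$, and $Q(\infty)=B$ (finite) when $\beta<1$. Since $Q$ is monotone, $Q\ge0$ on $(0,\infty)$ iff both $Q(0^+)\ge0$ and $Q(\infty)\ge0$; as the first is exactly~\eqref{ystarpos}, the binding extra requirement is $Q(\infty)\ge0$. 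A short computation gives $B\ge0\Leftrightarrow\beta\ge3\tfrac{1-\nu}{1+\nu}\gamma-\tfrac{2(1-2\nu)}{1+\nu}$, and the trichotomy for $Q(\infty)$ then reassembles into~\eqref{condgamma} after splitting into $\gamma\ge1$ (binding bound $\beta\ge\gamma$) and $0<\gamma<1$ (binding bound the affine one). I expect this last bookkeeping to be the main obstacle: one must organize the subcases in the $(\gamma,\beta)$-plane so that the three behaviours of $Q(\infty)$ collapse into the single minimum appearing in~\eqref{condgamma}, and check that the values match continuously across the seams $\beta=1$ and $\beta=\gamma$. The strong stored energy condition then follows automatically from the strong Baker-Ericksen inequality via the first step.
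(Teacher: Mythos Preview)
Your approach is essentially the same as the paper's: reduce Baker--Ericksen to $Q\ge0$, establish monotonicity of $Q$ (the paper does this directly via $Q'(y)=\tfrac{3(\beta-\gamma)(1-\nu)}{2(1-y)^2(1+\nu)}\,B(y)/y$ with $B>0$ away from $y=1$; you via the decomposition $Q=Ag+B$; both yield that $Q$ is monotone with direction $\mathrm{sign}(\beta-\gamma)$), read off the endpoint limits $Q(0^+),Q(1),Q(\infty)$, and then invoke Lemma~\ref{wposlemma} together with $\widehat{w}(\delta,\delta)=\widehat{w}_\mathrm{pf}(\delta)\ge0$ for the stored energy. The weak statement and the ``if'' direction of the strong statement are thus handled correctly.

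There is one place where your argument is lighter than the paper's, and it may or may not be a gap depending on how the biconditional is read. You assert that the stored energy condition ``comes for free'' from Baker--Ericksen, but Lemma~\ref{wposlemma} only gives the implication (BE $\Rightarrow$ non-negative stored energy), not its converse. If the statement is read as \eqref{condgamma} $\Leftrightarrow$ (strong BE $\wedge$ strong stored energy), your logic does close, since failure of~\eqref{condgamma} already kills strong BE and hence the conjunction. However, the paper proves the sharper claim that~\eqref{condgamma} is \emph{separately} equivalent to the strong non-negative stored energy condition: when~\eqref{condgamma} fails it exhibits explicit $(\delta,\eta)$ with $\eta<\delta$ and $\widehat{w}(\delta,\eta)<0$, by computing $\lim_{\delta\to\infty}\widehat{w}(\delta,\eta)$ for $\beta<1$ and $\lim_{\eta\to0^+}\widehat{w}(\delta,\eta)$ for $\beta>1$ and choosing the remaining variable so that the limit is negative. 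Your reduction ``the whole proposition reduces to a sign analysis of $Q$'' does not supply this, so if the intended content is that each of the two strong conditions is individually characterized by~\eqref{condgamma}, that direction for the stored energy is missing.
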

\begin{proof}
For the polytropic elastic constitutive function the inequality~\eqref{BEineq} reads $Q(y)\geq0$, where $Q$ is defined in~\eqref{pradptan}.
We compute
\[
Q'(y)=\frac{3(\beta-\gamma)(1-\nu)}{2(1-y)^2(1+\nu)}\frac{B(y)}{y},
\]
where $B$ is defined in~\eqref{abpol}. A simple analysis shows that
\[
B(y)>0,\quad\text{for all $0<y\neq 1$},\quad B(1)=0.
\]
It follows that $Q(y)$ is strictly increasing for $\beta>\gamma$ and strictly decreasing for $\beta<\gamma$, while for $\beta=\gamma$ we have $Q(y)=\frac{1-2\nu}{1+\nu}\geq0$. Hence
$\inf_{y>0}Q(y)=\inf_{y\in(0,1)}Q(y)=\lim_{y\to 0^+}Q(y)$, for $\beta>\gamma$, while $\inf_{y>0}Q(y)=\lim_{y\to \infty}Q(y)$, and $\inf_{y\in(0,1)}Q(y)=\lim_{y\to 1}Q(y)=\frac{1-2\nu}{1+\nu}\geq0$, for $\beta<\gamma$. 
The proof of the claims on the weak and the strong Baker-Ericksen inequality can now easily be completed by using that
\[
\lim_{y\to 0^+}Q(y)=\left\{\begin{array}{ll} {\displaystyle \frac{3\gamma(1-\nu)}{2\beta(1+\nu)}-\frac{1}{2}}& \text{if $\beta>0$}\\[0.4cm] {\displaystyle \frac{\gamma-\beta}{\beta(\beta-1)}\cdot\infty }& \text{if $\beta<0$}\end{array}\right.,\quad
\lim_{y\to \infty}Q(y)=\left\{\begin{array}{ll} {\displaystyle \frac{3(\gamma-1)(1-\nu)}{2(\beta-1)(1+\nu)}-\frac{1}{2}}& \text{if $\beta<1$}\\[0.4cm] {\displaystyle \frac{\beta-\gamma}{\beta(\beta-1)}\cdot\infty }& \text{if $\beta>1$}\\[0.4cm]
{\displaystyle \frac{1-\nu}{1+\nu}(1-\gamma)\cdot\infty} & \text{if $\beta=1$.}
\end{array}\right..
\]
As to the claims on the weak and strong non-negative stored energy condition, we first observe that, by Lemma~\ref{wposlemma}, for all  $\gamma>0$, and $\beta\neq0$ satisfying~\eqref{ystarpos} there holds
\[
\widehat{w}(\delta,\eta)\geq \inf_{\delta>0}\widehat{w}(\delta,\delta),\quad \text{for all $\eta\geq \delta>0$}
\]
and if in addition $(\beta,\gamma)$ satisfy~\eqref{condgamma} there also holds
\[
\widehat{w}(\delta,\eta)\geq \inf_{\delta>0}\widehat{w}(\delta,\delta),\quad \text{for all $(\delta,\eta)\in (0,\infty)^2$}.
\]
As $\widehat{w}(\delta,\delta)=\widehat{w}_\mathrm{pf}(\delta)$ and $\widehat{w}_\mathrm{pf}(\delta)>\widehat{w}_\mathrm{pf}(1)=0$,  for all $0<\delta\neq 1$, see~\eqref{polystored}, the ``\,if\," part of the claim on the non-negative stored energy condition is proved. It remains to prove that when $(\beta,\gamma)$ do not satisfy~\eqref{condgamma} the strong non-negative stored energy condition is violated, namely, there exists $\eta< \delta$ such that $\widehat{w}(\delta,\eta)<0$.  We present the details only for $(\beta,\gamma)\neq (1,1)$, which is actually the less trivial case. Assume first $\beta<1$. Then~\eqref{condgamma} is violated for all $\gamma>1$ and for $\gamma<1$ when
\begin{equation}\label{condtemp}
\beta<3\frac{1-\nu}{1+\nu}\gamma-\frac{2(1-2\nu)}{1+\nu}.
\end{equation}
Moreover for all $\eta>0$ fixed we have
\[
\lim_{\delta\to\infty}\widehat{w}(\delta,\eta)=\eta^{\gamma-1}\left(\frac{1}{\gamma-1}-\frac{3(1-\nu)}{(\beta-1)(1+\nu)}\right)-\frac{1}{\gamma-1}:=F(\eta)
\]
and it is straightforward to verify that  for all $\gamma>1$ and $\beta<1$, or for all $\gamma<1$, $\beta<1$ satisfying~\eqref{condtemp}, $F(\eta)$ is negative when $\eta$ is small enough. In the case $\beta>1$,~\eqref{condgamma} is violated for $\gamma>\beta $. Under the latter condition, for all $\delta$ fixed we have
\[
\lim_{\eta\to 0^+}\widehat{w}(\delta,\eta)=\frac{\delta^{-1}}{\gamma}-\frac{1}{\gamma-1},
\]
which is negative for $\delta>(\gamma-1)/\gamma$. This concludes the proof of the proposition.
\end{proof}
The region in the $(\gamma,\beta)$ plane where~\eqref{pradptan} satisfies the strong Baker-Ericksen inequality and the strong non-negative energy stored condition for $\nu\in(-1,1/2)$ fixed is depicted in Figure~\ref{fig1}. In the limit $\nu\to 1/2^-$ this region reduces to the line $\beta=\gamma$.
\begin{figure}[ht!]
\begin{center}
\includegraphics[width=0.6\textwidth]{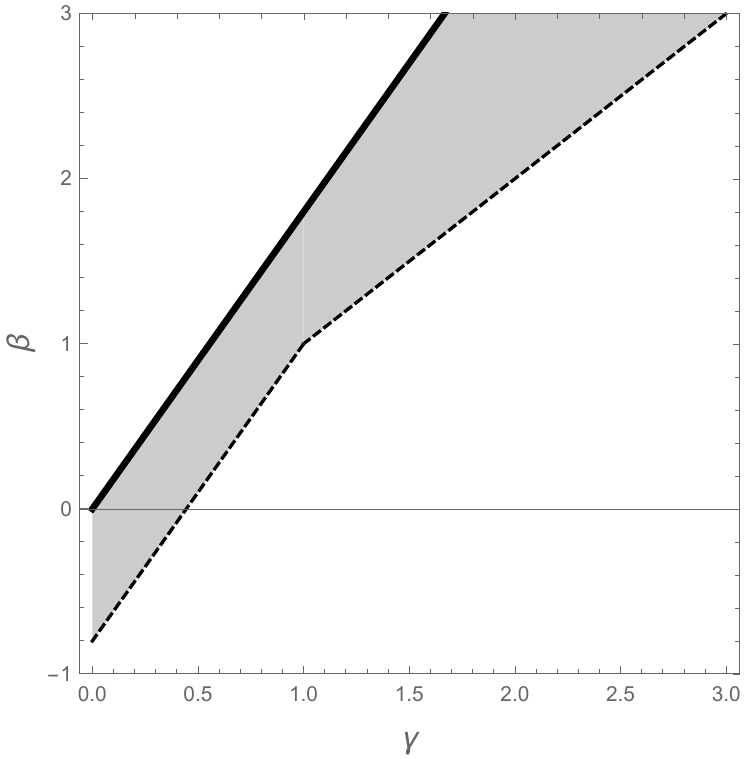}
\caption{The region on the $(\gamma,\beta)$ plane where the polytropic elastic constitutive function satisfies the strong Baker-Ericksen inequality and the strong non-negative stored energy condition for a fixed value of $\nu\in(-1,1/2)$ ($\nu=1/4$ in the picture). The dashed line is the curve~\eqref{condgamma}, the continuous line is the curve $\beta=3\frac{1-\nu}{(1+\nu)}\gamma$. For $\nu=1/2$ the two lines both reduce to $\beta=\gamma$. The weak Baker-Ericksen inequality and the weak non-negative stored energy condition are verified everywhere below the continuous line. }
\label{fig1}
\end{center}
\end{figure}

\section{Self-gravitating polytropic elastic balls}\label{staticsec}
The main topic of this section is the numerical analysis of the existence of self-gravitating polytropic elastic balls in static equilibrium or homologous motion. The constitutive function of the ball is then given by~\eqref{pradptan}, with $\gamma>0$, $\beta\neq 0$ satisfying~\eqref{ystarpos}, while the reference pressure in the equation of state is given by $\mathcal{P}=-\prad(y_\mathrm{b},1)=\kappa/\gamma$, where $y_\mathrm{b}$ is given by~\eqref{ystarpoly}. Hence the equation of state of the ball is 
\begin{subequations}\label{pressures}
\begin{equation}
p_\mathrm{rad}(t,r)=F_\mathrm{rad}\left(\frac{\rho(t,r)}{\mathcal{K}},\frac{m(t,r)}{\frac{4\pi}{3}\mathcal{K}r^3}\right), \quad p_\mathrm{tan}(t,r)=F_\mathrm{tan}\left(\frac{\rho(t,r)}{\mathcal{K}},\frac{m(t,r)}{\frac{4\pi}{3}\mathcal{K}r^3}\right),
\end{equation}
where
\begin{align}
&F_\mathrm{rad}(\delta,\eta)=\frac{\kappa}{\gamma}+\prad(\delta,\eta)=3\kappa\frac{1-\nu}{1+\nu}\left(\frac{(\delta/\eta)^\beta-y_\mathrm{b}^\beta}{\beta}\right)\eta^\gamma\\
&F_\mathrm{rad}(\delta,\eta)=\frac{\kappa}{\gamma}+\ptan(\delta,\eta)=3\kappa\frac{1-\nu}{1+\nu}\left[\left(\frac{(\delta/\eta)^\beta-y_\mathrm{b}^\beta}{\beta}\right)+\frac{1+\nu}{1-\nu}\big(1-\delta/\eta\big)Q(\delta/\eta)\right]\eta^\gamma
\end{align}
\end{subequations}
where $Q(y)$ is given by~\eqref{Q}.
From these expressions it is clear that the interior of the ball corresponds to the region where $y(t,r)>y_\mathrm{b}$, while the boundary is the first radius $r=R(t)$ at which $y(t,R(t))=y_\mathrm{b}$; recall that $y(t,0)=1$ and $y_\mathrm{b}\in [0,1)$. 
Moreover if (and only if) 
$\beta=3\frac{1-\nu}{1+\nu}\gamma$
we have $y_\mathrm{b}=0$, and thus the mass density vanishes on the boundary of the ball. In this case~\eqref{pressures} give that the radial and tangential pressure coincide on the boundary, that is to say, the boundary has zero shear stress. A special example of this situation, but not the only possible one, is when $\nu=1/2$ and $\beta=\gamma$, i.e., when the elastic ball is actually a polytropic fluid ball. 

\subsection{Static solutions}\label{staticsec}
Using~\eqref{abpol} in~\eqref{staticsystem} we obtain the following system on $(\delta,\eta)$:
\begin{subequations}\label{deltastat}
\begin{align}
&\delta'=\left(\frac{\delta}{\eta}\right)^{1-\beta}\left(\frac{3(\beta-\gamma)}{r}B(\delta/\eta)\eta-\theta r\eta^{2-\gamma}\delta\right),\\
&\eta'=-\frac{3}{r}(\eta-\delta),
\end{align}
where
\begin{equation}\label{deftheta}
\theta=\frac{4\pi G\mathcal{K}^2}{9\kappa}\frac{1+\nu}{1-\nu}.
\end{equation}
\end{subequations}
The system~\eqref{deltastat} is supplied with center data $\delta(0)=\eta(0)=\delta_c>0$, i.e., $y(0)=1$. If within the interval of existence of strongly regular solutions of~\eqref{deltastat} there is a first radius $R>0$ such that $y(R)=y_\mathrm{b}$, then a static self-gravitating polytropic elastic ball with radius $R$ forms. The positivity of the tangential pressure in the interior follows by the weak Baker-Ericksen inequality, see Lemma~\ref{propostatic}(iii).
%
%
The values of the parameters $\gamma,\beta$ for which it is found numerically that static self-gravitating polytropic elastic balls exist form the domain $\mathcal{O}$, depending on $\nu$, depicted in Figure~\ref{fig2}.  The particular shape of this region leads to the following conjecture:

{\bf Conjecture 1:} {\it There exists $\beta_\star=\beta_\star(\nu,\gamma)$ and $\gamma_\star=\gamma_\star(\nu)$ satisfying 
\begin{equation}\label{betastar}
\beta_\star(\nu,\gamma_\star(\nu))=3\frac{1-\nu}{1+\nu}\gamma_\star(\nu),\quad\text{ and }\quad\beta_\star(\nu,\gamma)<3\frac{1-\nu}{1+\nu}\gamma, \text{ for $0<\gamma<\gamma_\star(\nu)$},
\end{equation} 
such that static self-gravitating polytropic elastic balls exist if and only if 
\[
\mathrm{(a)}\ 0<\gamma\leq\gamma_\star(\nu) \text{ and }\beta< \beta_\star(\nu,\gamma),\quad\text{or}\quad 
\mathrm{(b)}\ \gamma> \gamma_\star(\nu)\text{ and }\beta\leq 3\frac{1-\nu}{1+\nu}\gamma.
\]
}

{\it Remark.} When $\beta=3\frac{1-\nu}{1+\nu}\gamma$, i.e., when the boundary of the polytropic elastic ball has zero shear, condition (a) is empty, while condition (b) becomes $\gamma> \gamma_\star(\nu)$. These static balls are necessarily of type $\mathrm{A}$. Moreover it is found numerically that $\gamma_\star$ is increasing, $\gamma_\star(\nu)\to 0$ as $\nu\to -1^+$ and $\gamma_\star(\nu)\to 6/5$ as $\nu\to 1/2^-$, see Figure~\ref{fig3}. In particular, when $\nu\to 1/2^-$ the conjecture reduces to $\gamma>6/5$, which is indeed the necessary and sufficient condition for the existence of static self-gravitating polytropic fluid balls.

\begin{figure}[ht!]
\begin{center}
\subfigure[$\nu=-0.5, \gamma_\star(\nu)\approx 0.50$]{
\includegraphics[width=0.47\textwidth,trim=0cm 0cm 0cm 0cm, clip ]{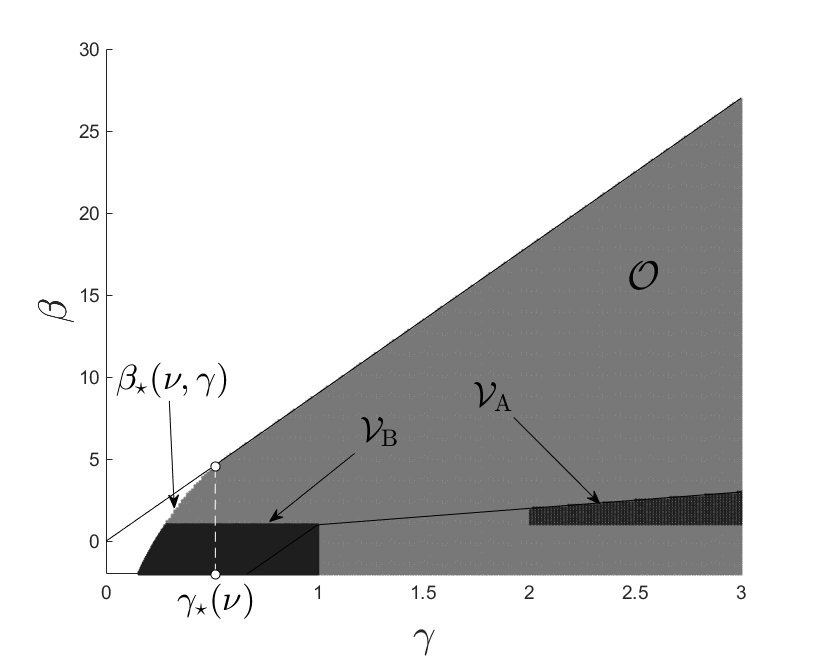}}\quad
\subfigure[$\nu=0, \gamma_\star(\nu)\approx 0.92$]{
\includegraphics[width=0.47\textwidth,trim=0cm 0cm 0cm 0cm, clip ]{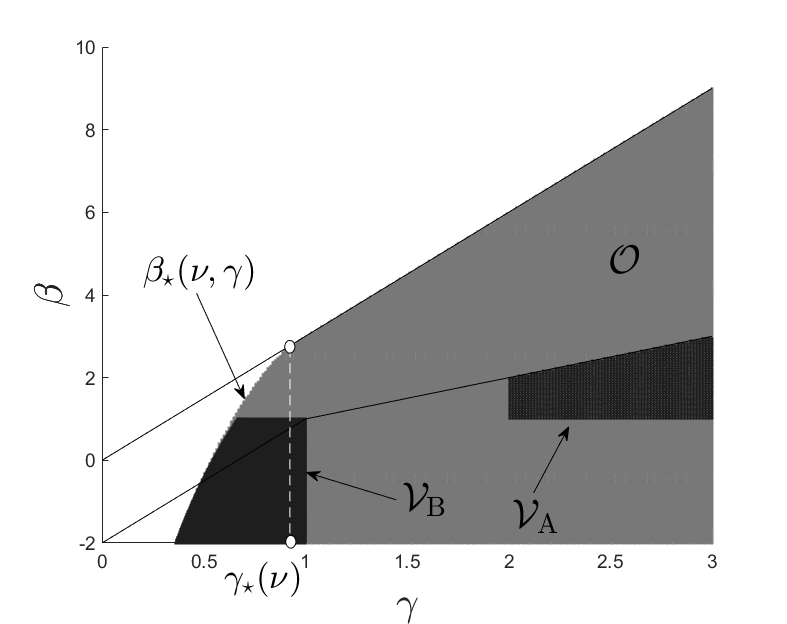}}\\
\subfigure[$\nu=0.25, \gamma_\star(\nu)\approx 1.08$]{\includegraphics[width=0.47\textwidth,trim=0cm 0cm 0cm 0cm, clip ]{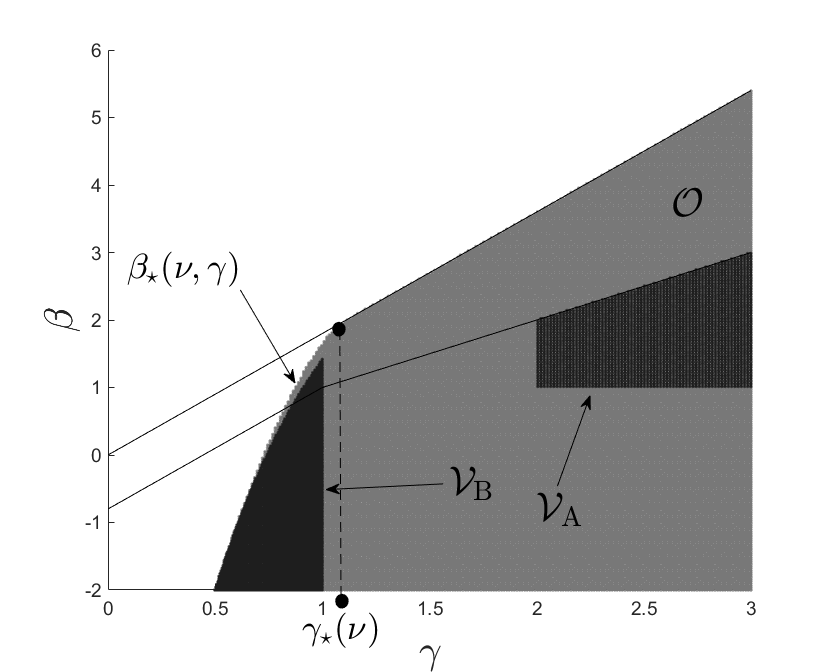}}\quad
\subfigure[$\nu=0.48,\gamma_\star(\nu)\approx 1.19$]{
\includegraphics[width=0.47\textwidth,trim=0cm 0cm 0cm 0cm, clip ]{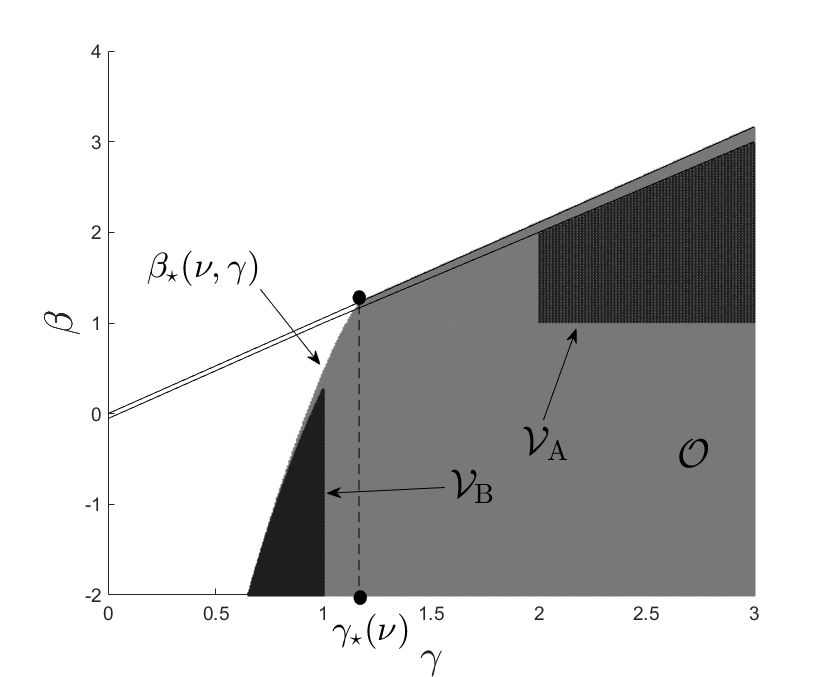}}
\caption{The region $\mathcal{O}$ in the $(\gamma,\beta)$ for which it is found numerically that static self-gravitating polytropic elastic balls exist. The darkest region is the domain $\mathcal{V}=\mathcal{V}_\mathrm{A}\cup\mathcal{V}_\mathrm{B}$ where existence is proved analytically in Theorem~\ref{maintheostatic}. The set $\mathcal{V}_\mathrm{A}$ is defined by~\eqref{condgammabetaI} and corresponds to type $\mathrm{A}$ balls;  the set $\mathcal{V}_\mathrm{B}$ is defined by~\eqref{condgammabetaII} and corresponds to type $\mathrm{B}$ balls. The upper continuous line is $\beta=3(1-\nu)/(1+\nu)\gamma$, which corresponds to polytropic elastic balls with zero boundary shear, while the lower continuous line is the curve in~\eqref{condgamma}. }
\label{fig2}
\end{center}
\end{figure}

\begin{figure}[ht!]
\begin{center}
\includegraphics[width=0.8\textwidth,trim=4cm 4cm 4cm 4cm, clip ]{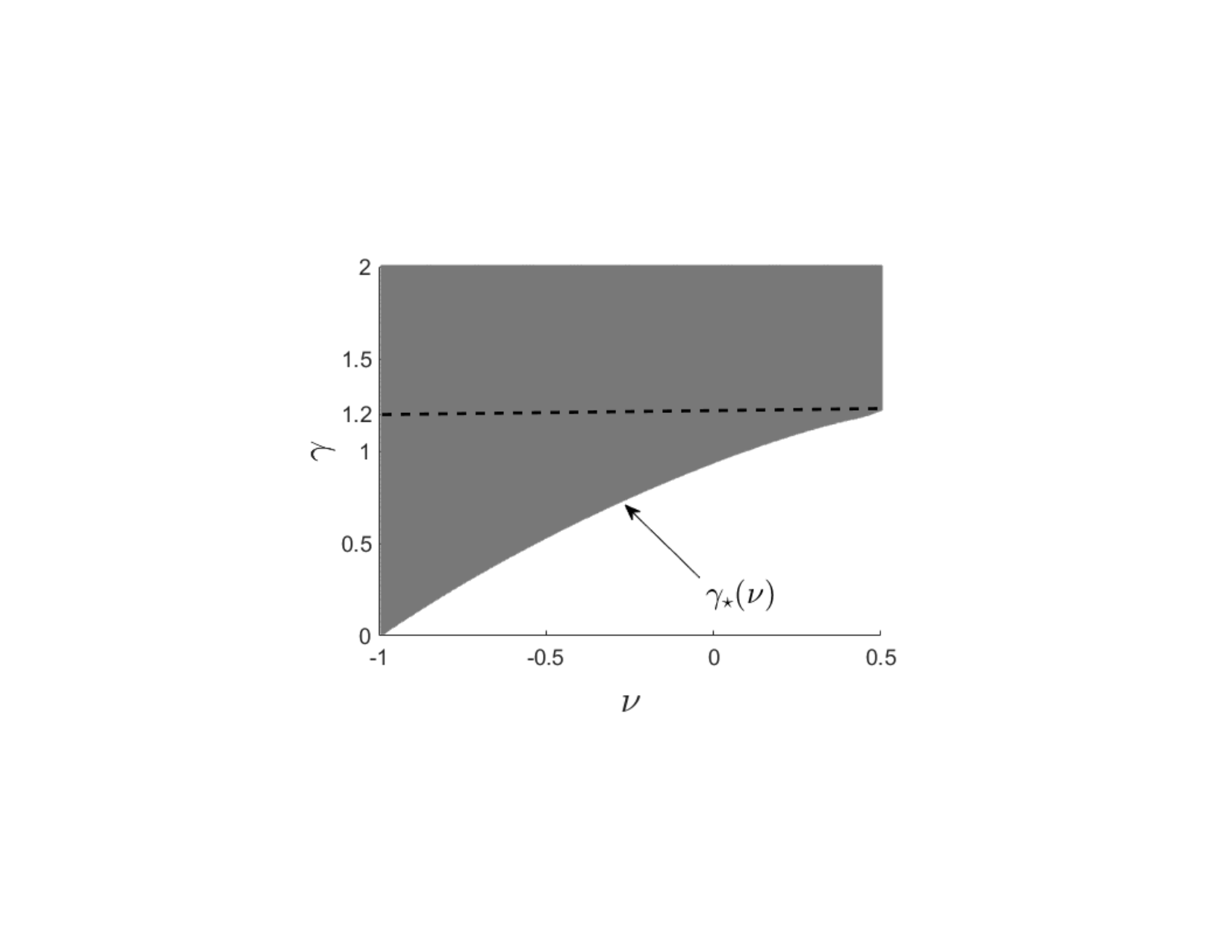}
\caption{The region on the $(\nu,\gamma)$ plane for which it found numerically that static self-gravitating polytropic elastic balls with zero shear at the boundary exist. These static balls are necessarily of type $\mathrm{A}$ and are characterized by the shear parameter $\beta=3\gamma(1-\nu)/(1+\nu)$.  }
\label{fig3}
\end{center}
\end{figure}


The the dark region $\mathcal{V}=\mathcal{V}_\mathrm{A}\cup\mathcal{V}_\mathrm{B}$ in Figure~\ref{fig2} contains the values of the parameters $\beta,\gamma$ for which the existence of static self-gravitating polytropic elastic balls is proved in the following theorem.
 
\begin{theorem}\label{maintheostatic}
Let $\delta_c>0$, $\kappa>0$ and $\nu\in (-1,1/2)$ be given. If
\begin{equation}\label{condgammabetaI}
\gamma>2 \quad\text{and}\quad 1<\beta\leq\gamma,
\end{equation}
there exists a unique static self-gravitating polytropic elastic ball of type $\mathrm{A}$ with central density $\delta(0)=\delta_c$. 
If 
\begin{equation}\label{condgammabetaII}
 (0<\gamma\leq \beta<1 \text{ or } \beta<\gamma\leq 1) \quad \text{and}\quad \frac{4-3\gamma}{3(2-\gamma)}<\left(1-\frac{\beta}{3\gamma}\frac{1+\nu}{1-\nu}\right)^{1/\beta},
\end{equation}
there exists a unique static self-gravitating polytropic elastic ball of type $\mathrm{B}$ with central density $\delta(0)=\delta_c$.
Moreover in both cases
\begin{equation}\label{es}
p_\mathrm{tan}(r)\geq p_\mathrm{rad}(r),\text{ for $r\in [0,R]$},
\end{equation}
where $R$ is the radius of the ball.
\end{theorem}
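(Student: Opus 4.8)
The plan is to use the scaling invariance of the static system~\eqref{deltastat} to reduce it to an autonomous planar system and then analyse the resulting phase portrait. Since the shear variable $y=\delta/\eta$ is scale invariant and the combination $\zeta:=\theta r^2\eta^{2-\gamma}$, with $\theta$ as in~\eqref{deftheta}, is invariant under $r\mapsto\lambda r$, $(\delta,\eta)\mapsto\lambda^{2/(\gamma-2)}(\delta,\eta)$, I would take $(y,\zeta)$ as new unknowns and $s=\log r$ as independent variable. A direct computation from~\eqref{deltastat}, using $\eta'=-3(\eta-\delta)/r$ and~\eqref{abpol}, gives
\[
\frac{dy}{ds}=3(\beta-\gamma)\,y^{1-\beta}B(y)-\zeta\,y^{2-\beta}+3y(1-y),\qquad
\frac{d\zeta}{ds}=\zeta\bigl(2-3(2-\gamma)(1-y)\bigr),
\]
together with $d(\log\eta)/ds=-3(1-y)$, which recovers $\eta$ (hence $r$ and $\delta$) from an orbit once the central normalisation $\eta(0)=\delta_c$ is imposed. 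The center $r\to0^+$ corresponds to $s\to-\infty$ and $(y,\zeta)\to(1,0)$; linearising at $(1,0)$, where $B(1)=B'(1)=0$ and $B''(1)=1$, shows it is a saddle (eigenvalues $2$ and $-3$) whose one-dimensional unstable manifold enters the strip $\{0<y<1,\ \zeta>0\}$ with $y$ decreasing and $\zeta$ increasing. This unstable manifold is the unique regular orbit issuing from the center, the phase-plane counterpart of the existence and uniqueness of the strongly regular solution of~\eqref{deltastat} with $\delta(0)=\delta_c$ furnished by Lemma~\ref{propostatic}(i). The ball is the truncation at the first $s^\ast$ with $y(s^\ast)=y_\mathrm{b}$, of radius $R=e^{s^\ast}$.

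Under~\eqref{condgammabetaI} ($\gamma>2$, $1<\beta\le\gamma$) the coefficient $2-3(2-\gamma)(1-y)=2+3(\gamma-2)(1-y)\ge2$ on $\{y\le1\}$, so $\zeta$ is strictly increasing and unbounded and the system has no equilibrium in the strip. Because $\beta\le\gamma$ makes the shear term $3(\beta-\gamma)y^{1-\beta}B(y)\le0$ (recall $B\ge0$ from the proof of Proposition~\ref{BEpro}), the growing term $-\zeta\,y^{2-\beta}$ keeps $dy/ds<0$ and $y$ decreases monotonically; a comparison estimate rules out $y$ staying above $y_\mathrm{b}$ for all $s$, since then $\zeta$ would grow at least like $e^{2s}$ and drive $y$ to $-\infty$, so $y$ meets $y_\mathrm{b}$ at a finite $s^\ast$ and the ball has finite radius. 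Continuing the orbit, $\zeta$ grows like $e^{(3\gamma-4)s}$ while $y\to0$, and integrating the asymptotic relation $y^{\beta-2}\,dy\sim-\zeta\,ds$ shows, using $\beta>1$, that $y$ (hence $\delta$) vanishes at a finite $s$; thus $R_\mathrm{max}<\infty$ and the ball is of type $\mathrm{A}$.

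Under~\eqref{condgammabetaII} ($\gamma\le1$) the $\zeta$-nullcline meets the strip at $y=y_\ast:=\tfrac{4-3\gamma}{3(2-\gamma)}\in(0,1)$, producing an interior equilibrium $(y_\ast,\zeta_\ast)$ with $\zeta_\ast>0$; the two alternatives $\gamma\le\beta<1$ and $\beta<\gamma\le1$ fix the sign of the shear term and are used to show that the unstable-manifold orbit stays in the strip, remains monotone in $y$ while $y>y_\ast$, and is attracted to $(y_\ast,\zeta_\ast)$, so that it exists for all $s$ and $R_\mathrm{max}=\infty$ (type $\mathrm{B}$). The hypothesis in~\eqref{condgammabetaII} is exactly $y_\ast<y_\mathrm{b}$, since $y_\ast=\tfrac{4-3\gamma}{3(2-\gamma)}$ and $y_\mathrm{b}$ is given by~\eqref{ystarpoly}; this guarantees that the decreasing $y$ crosses the level $y_\mathrm{b}$ at a finite $s^\ast$ before reaching its limit $y_\ast$, so the ball forms with finite radius even though $R_\mathrm{max}=\infty$.

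The inequality~\eqref{es} requires no separate work: in the interior $\eta>\delta$, i.e. $y<1$, by Lemma~\ref{propostatic}(ii) (applicable since the model is globally strictly hyperbolic, $\widehat{a}>0$), and since the polytropic elastic constitutive function satisfies the weak Baker-Ericksen inequality by Proposition~\ref{BEpro}, Lemma~\ref{propostatic}(iii) yields $p_\mathrm{tan}\ge p_\mathrm{rad}$ on $[0,R]$. The main obstacle is the global phase-plane analysis under~\eqref{condgammabetaII}: one must rule out that the orbit escapes to $\zeta=\infty$ or reaches $y=0$ in finite $s$ and confirm convergence to $(y_\ast,\zeta_\ast)$, for instance by constructing a forward-invariant trapping region bounded by the nullclines or a suitable Lyapunov function, with the monotonicity of $y$ up to the crossing of $y_\mathrm{b}$ being the delicate point.
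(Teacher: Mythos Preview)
Your reduction to an autonomous planar system via $(y,\zeta)$ and $s=\log r$ is essentially the paper's approach; the paper uses $v=\zeta\,y^{1-\beta}$ instead of $\zeta$, which makes the $y$-equation take the cleaner form $dy/ds=[\Upsilon(y)-v]\,y$ with $\Upsilon(y)=3(1-y)+3(\beta-\gamma)B(y)y^{-\beta}$, but the content is the same.

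For case~\eqref{condgammabetaI} your phase-plane argument is plausible but sketchy: the assertion that ``$-\zeta y^{2-\beta}$ keeps $dy/ds<0$'' ignores the positive term $3y(1-y)$, and you never justify monotonicity of $y$ (it does hold, via a second-derivative argument at would-be critical points, but you do not supply it). The paper bypasses the phase plane entirely here: since $\beta\le\gamma$, the first term in the $\delta$-equation of~\eqref{deltastat} is non-positive, giving the differential inequality $\delta'\le-\theta r\,\eta^{1+\beta-\gamma}\delta^{2-\beta}$; combining this with either $\eta\le\delta_c$ (when $\beta\le\gamma-1$) or $\eta>\delta$ (when $\gamma-1<\beta\le\gamma$) yields an explicit upper bound for $\delta$ that vanishes at a finite radius, hence $R_{\max}<\infty$. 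This is both shorter and fully rigorous.

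For case~\eqref{condgammabetaII} you have a genuine gap, which you yourself flag: you correctly locate the interior equilibrium at $y_\ast=\tfrac{4-3\gamma}{3(2-\gamma)}$ and correctly read the last inequality in~\eqref{condgammabetaII} as $y_\ast<y_\mathrm{b}$, but you do not prove that the regular orbit actually converges to this equilibrium. Neither monotonicity of $y$ nor a trapping region is the right tool here; the paper instead (i) shows the orbit is bounded by the elementary estimate $dv/ds\le(a-(1-\beta)v)v$ (valid because $\beta<1$), so the $\omega$-limit set is non-empty, and (ii) applies the Dulac--Bendixson criterion with Dulac function $\phi(y,v)=v^{-1}y^{\beta-2}$, computing $\nabla\!\cdot(\phi F)=-3\phi\,(1-\gamma(1-y))<0$ on the strip for $\gamma\le1$, which rules out periodic orbits; together with a stability check showing the only other fixed point $Q=(0,0)$ is a saddle with no interior connections, Poincar\'e--Bendixson forces $\omega(\Gamma)=\{(y_\ast,\Upsilon(y_\ast))\}$. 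This Dulac function is the missing idea, and without it the convergence claim (hence $R_{\max}=\infty$ and the crossing of $y_\mathrm{b}$) is unproved. Note also that monotonicity of $y$ down to $y_\ast$ is neither asserted nor needed in the paper's argument; convergence may well be oscillatory, and the crossing of $y_\mathrm{b}$ follows simply from $y(s)\to y_\ast<y_\mathrm{b}$.

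Your treatment of~\eqref{es} via Proposition~\ref{BEpro} and Lemma~\ref{propostatic}(iii) matches the paper and is correct.
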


The proof of Theorem~\ref{maintheostatic} can be found in Appendix. It can be seen that the proved result is still quite far from the conjectured one and does not apply to the important case of polytropic elastic balls with zero boundary shear; this problem requires further investigation.


\subsection{Homologous solutions}\label{homosec}
In this final section we discuss briefly self-gravitating polytropic elastic balls in homologous motion.  As mentioned in Section~\ref{remhomo}, the solution $(\delta,\eta,u)$ of the system~\eqref{sssystem2} for such balls has the form 
\begin{subequations}\label{homansatz2}
\begin{equation}
u(t,r)=\frac{\dot{\omega}(t)}{\omega(t)}r,\quad \delta(t,r)=\frac{1}{\omega(t)^3}\delta_0\left(\frac{r}{\omega(t)}\right),\quad \eta(t,r)=\frac{1}{\omega(t)^3}\eta_0\left(\frac{r}{\omega(t)}\right),
\end{equation} 
where
\begin{equation}
\eta_0(r)=\frac{3}{r^3}\int_0^r\delta_0(s)s^2\,ds,\quad \delta_0(r)=\delta(0,r),
\end{equation}
for some $C^1$ function $\omega: \omega(0)=1$.
\end{subequations}
The system~\eqref{sssystem2} for solutions of the form~\eqref{homansatz2} reads
\begin{align}
\mathcal{K}z\delta_0(z)\omega(t)\ddot \omega(t)&=-\widehat{a}(\omega(t)^{-3}\delta_0(z),\omega(t)^{-3}\eta_0(z))\delta_0'(z)\nonumber\\
&\quad+\widehat{b}(\omega(t)^{-3}\delta_0(z),\omega(t)^{-3}\eta_0(z))\frac{\eta_0(z)-\delta_0(z)}{z}\nonumber\\
&\quad-\frac{4\pi G}{3}\mathcal{K}^2z
\delta_0(z)\eta_0(z)\omega(t)^{-1},\label{homologsys}
\end{align}
where $z=r/\omega(t)$.
Provided the functions $\widehat{a},\widehat{b}$ satisfy the scaling condition 
\[
\varepsilon^{-1/3}\,\widehat{a}(\varepsilon\delta,\varepsilon\eta)=\widehat{a}(\delta,\eta),\quad \varepsilon^{-1/3}\,\widehat{b}(\varepsilon\delta,\varepsilon\eta)=\widehat{b}(\delta,\eta),\ \text{for all $\varepsilon>0$,}
\]
the system~\eqref{homologsys} transforms into three separate equations on $\omega(t),\delta_0(z),\eta_0(z)$, namely
\begin{subequations}\label{homologsys2}
\begin{align}
&\omega(t)^2\ddot{\omega}(t)=\frac{4\pi G}{3}\mathcal{K}\alpha,\label{eqphi}\\
&\widehat{a}(\delta_0(z),\eta_0(z))\delta_0'(z)=\widehat{b}(\delta_0(z),\eta_0(z))\frac{\eta_0(z)-\delta_0(z)}{z}-\frac{4\pi G}{3}\mathcal{K}^2z\delta_0(z)(\eta_0(z)+\alpha),\label{eqdelta0pol}\\
&\eta_0'(z)=-\frac{3}{z}(\eta_0(z)-\delta_0(z))\label{eqeta0pol},
\end{align}
\end{subequations}
where $\alpha\neq 0$ is a (dimensionless) constant. 

In the following we study the system~\eqref{homologsys2} for the polytropic constitutive function $(\prad,\ptan)$ given by~\eqref{pradptan} with $\gamma=4/3$. We are interested in solutions describing self-gravitating balls with radius $R(t)$ and, as shown in Section~\ref{remhomo}, this implies $R(t)=Z\omega(t)$, for some constant $Z>0$. Moreover the reference pressure is given by $\mathcal{P}=-\prad(y_\mathrm{b},1)=\kappa/\gamma$, hence the equation of state of the ball is~\eqref{pressures}. In particular, $y(t,R(t))=y_\mathrm{b}$.

The equation for $\omega$ is the same as in the polytropic fluid case~\cite{GW,Makino2}. Positive solutions of~\eqref{eqphi} with data $(\omega(0)=1,\dot{\omega}(0))$ have one of the following behaviors: either (a) $\omega\in C^1$ for all $t\geq 0$ and $\omega(t)\to\infty$ as $t\to\infty$, or (b) $\omega\in C^1([0,T))$ and $\omega(t)\to 0$ as $t\to T^-$, for some $T>0$.  Without loss of generality we assume $\dot{\omega}(0)=0$, i.e., the ball is initially at rest, so that solutions of type (a) correspond to $\alpha>0$, while solutions of type (b) correspond to $\alpha<0$.
Hence the system~\eqref{deltahom} describes continuously expanding balls when $\alpha>0$ and continuously collapsing balls when $\alpha<0$, with $R(t)\to \infty$ as $t\to\infty$ in the former case and $R(t)\to 0$ as $t\to T^-$ in the latter.

Next consider the equations for $(\delta_0,\eta_0)$, specifically
\begin{subequations}\label{deltahom}
\begin{align}
&\delta_0'=\left(\frac{\delta_0}{\eta_0}\right)^{1-\beta}\left(\frac{3(\beta-4/3)}{z}B(\delta_0/\eta_0)\eta_0-\theta z\eta^{-1/3}\delta_0(\eta_0+\alpha)\right),\\
&\eta_0'=-\frac{3}{z}(\eta_0-\delta_0),
\end{align}
\end{subequations}
where $\theta$ is given by~\eqref{deftheta}. 
Strongly regular solutions of~\eqref{deltahom} are defined precisely as in the static case and satisfy $y_0(0)=1$, where $y_0(z)=\delta_0(z)/\eta_0(z)$. Moreover since the difference between~\eqref{deltahom} and~\eqref{deltastat} is a $O(z)$ term, it is clear that a unique strongly regular solution of~\eqref{deltahom} with center data $\delta_0^c=\delta_0(0)=\eta_0(0)$ exists in a maximal interval $[0,Z_\mathrm{max})$. As
\begin{subequations}\label{label}
\begin{align}
&p_\mathrm{rad}(t,r)=F_\mathrm{rad}(\delta(t,r),\eta(t,r))=\omega(t)^{-4}F_\mathrm{rad}(\delta_0(r/\omega(t)),\eta_0(r/\omega(t)),\\
&p_\mathrm{tan}(t,r)=F_\mathrm{tan}(\delta(t,r),\eta(t,r))=\omega(t)^{-4}F_\mathrm{tan}(\delta_0(r/\omega(t)),\eta_0(r/\omega(t)),
\end{align}
\end{subequations}
the constant $Z$ in the radius $R(t)=Z\omega (t)$ of the ball is given by 
the first value of $z\in (0,Z_\mathrm{max})$ at which $F_\mathrm{rad}(\delta_0(z),\eta(z))=0$, that is $y_0(Z)=y_\mathrm{b}$, where the boundary shear $y_\mathrm{b}$ is given by
\begin{equation}
y_\mathrm{b}=\left[1-\frac{\beta}{4}\left(\frac{1+\nu}{1-\nu}\right)\right]^{1/\beta},
\end{equation}
see~\eqref{ystarpoly}. As in the static case, we can distinguish between type $\mathrm{A}$ homologous balls, for which $0<Z<Z_\mathrm{max}<\infty$, and type $\mathrm{B}$ homologous balls, for which $0<Z<Z_\mathrm{max}=\infty$; in the case of polytropic elastic balls with zero boundary shear (including fluid balls) only type $\mathrm{A}$ solutions are admissible.

The existence of a radius $Z$ with the properties above has been investigated numerically in the case of collapsing balls with zero shear at the boundary, i.e., $\beta= 4(1-\nu)/(1+\nu)$; the results are summarized in Figure~\ref{homofig}. It is found that  there exists $\delta_\star(\alpha,\nu)$ such that collapsing homologous self-gravitating polytropic elastic balls with zero boundary shear exist if and only if $\delta_0(0)\geq\delta_\star(\alpha,\nu)$.
It can be seen that $\delta_\star(\alpha,\nu)$ is increasing with respect to the Poisson ratio, meaning that upon allowing shear interior deformations collapsing homologous elastic balls can form with smaller initial central densities than in the fluid case.

\begin{figure}[ht!]
\begin{center}
\includegraphics[width=0.7\textwidth,trim=0cm 0cm 0cm 0cm, clip ]{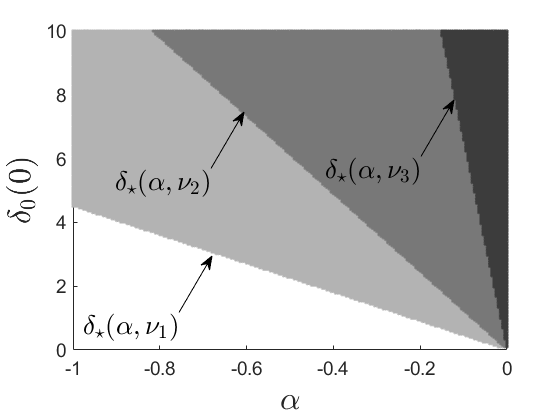}
\caption{The region in the $(\alpha,\delta_0(0))$ plane for which it found numerically that collapsing homologous self-gravitating polytropic elastic balls with zero shear at the boundary exist. These balls exist only for $\delta_0(0)\geq\delta_\star(\alpha,\nu)$; in this picture $\nu_1=0$, $\nu_2=0.25$, $\nu_3=0.45$. }
\label{homofig}
\end{center}
\end{figure}

In the following simple final theorem the existence of expanding homologous self-gravitating polytropic elastic balls is proved for some values of the shear parameter $\beta$.

\begin{theorem}\label{mainhomo}
If $\alpha>0$ and $1<\beta\leq4/3$, then for all $\delta_0^c>0$ there holds $Z_\mathrm{max}<\infty$,  $0<y_0(z)<1$, for $z\in (0,Z_\mathrm{max})$, and $y_0(z)\to 0^+$ as $z\to Z_\mathrm{max}^-$.  In this case there exists a unique continuously expanding homologous self-gravitating polytropic elastic ball $(\rho,p_\mathrm{rad},p_\mathrm{tan},u)$ of type $\mathrm{A}$ with central density $\rho(0,t)=\mathcal{K}\delta_0^c/\omega(t)$ and radius $R(t)= Z\omega(t)$, where $Z\in (0,Z_\mathrm{max})$ is the first solution of $y(Z)=y_\mathrm{b}$. Moreover $p_\mathrm{tan}(t,r)\geq p_\mathrm{rad}(t,r)$ holds for all times in the interior of the ball.
\end{theorem}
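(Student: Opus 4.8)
The plan is to reduce the whole statement to a careful study of the shear variable $y_0=\delta_0/\eta_0$ for the autonomous profile system~\eqref{deltahom}, and then to read off the assertions about the ball from the known behaviour of the scale factor. Since $\alpha>0$ and we take $\dot\omega(0)=0$, equation~\eqref{eqphi} has a global type-(a) solution with $\omega\in C^1([0,\infty))$ and $\omega(t)\to\infty$; this is identical to the fluid case and needs no new work. All the content therefore lies in the profile equations~\eqref{deltahom}, for which a unique strongly regular solution with $\delta_0(0)=\eta_0(0)=\delta_0^c$ and $y_0(0)=1$ exists on a maximal interval $[0,Z_\mathrm{max})$.

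First I would show $0<y_0<1$ on $(0,Z_\mathrm{max})$, i.e.\ $\eta_0>\delta_0$. The lower bound $y_0>0$ is built into strong regularity (positivity of $\delta_0$, hence of $\eta_0=3z^{-3}\int_0^z\delta_0 s^2\,ds$). For the upper bound I would repeat the barrier argument of Lemma~\ref{propostatic}(ii): with $v=\eta_0-\delta_0$, the center expansions give $v''(0)=-\tfrac25\delta_0''(0)>0$ (here $\alpha>0$ forces $\delta_0''(0)<0$), so $v>0$ for small $z$; and if $v$ first vanished at some $z_*>0$ then $y_0(z_*)=1$, where $B(1)=0$ annihilates the shear term and leaves $\delta_0'(z_*)=-\theta z_*\delta_0(z_*)^{2/3}(\delta_0(z_*)+\alpha)<0$, whence $v'(z_*)=-\delta_0'(z_*)>0$, contradicting $v'(z_*)\le0$. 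This also yields $\eta_0'<0$, so $0<\eta_0\le\delta_0^c$, and—using $B(y_0)>0$ for $y_0\in(0,1)$ together with $\beta\le 4/3=\gamma$, which makes the shear term nonpositive—that $\delta_0'<0$ throughout.

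The \emph{crux} is $Z_\mathrm{max}<\infty$ with $y_0\to0^+$, and I expect this to be the only genuine obstacle. Here I would exploit the two sign facts $1-\beta<0$ and $\beta-1>0$. Discarding the nonpositive shear term in~\eqref{deltahom} and using $\eta_0+\alpha\ge\alpha$ together with $\eta_0^{\beta-4/3}\ge(\delta_0^c)^{\beta-4/3}$ (valid since $\beta-4/3\le0$ and $\eta_0\le\delta_0^c$), the factor $y_0^{1-\beta}=\delta_0^{1-\beta}\eta_0^{\beta-1}$ collapses the gravitational term into a closed differential inequality for $\delta_0$ alone,
\[
\delta_0'\le -\theta\alpha(\delta_0^c)^{\beta-4/3}\,z\,\delta_0^{2-\beta}=:-Cz\,\delta_0^{2-\beta},\qquad C>0.
\]
Since $0<2-\beta<1$, setting $\zeta=\delta_0^{\beta-1}$ turns this into $\zeta'\le-(\beta-1)Cz$, so $\delta_0(z)^{\beta-1}\le(\delta_0^c)^{\beta-1}-\tfrac12(\beta-1)Cz^2$; the right-hand side reaches $0$ at a finite radius, forcing $\delta_0\to0$ at some $Z_\mathrm{max}<\infty$. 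Because the right-hand side of~\eqref{deltahom} is locally Lipschitz while $\delta_0,\eta_0>0$ and both stay bounded, the only possible breakdown of the strongly regular solution is indeed $\delta_0\to0$. Finally $\eta_0(Z_\mathrm{max})=3Z_\mathrm{max}^{-3}\int_0^{Z_\mathrm{max}}\delta_0 s^2\,ds>0$ since $\delta_0>0$ on $(0,Z_\mathrm{max})$, so $y_0=\delta_0/\eta_0\to0^+$. The delicate point is precisely that the degenerate factor $y_0^{1-\beta}$, which blows up as $y_0\to0$, is turned from a liability into the mechanism driving finite-radius collapse, and this hinges on $\beta>1$.

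With the profile in hand, the ball follows quickly. Since $y_0$ runs continuously from $y_0(0)=1$ down to $\lim_{z\to Z_\mathrm{max}^-}y_0=0$ and $y_\mathrm{b}\in[0,1)$, the intermediate value theorem gives a first $Z\le Z_\mathrm{max}<\infty$ with $y_0(Z)=y_\mathrm{b}$ (with $Z<Z_\mathrm{max}$ whenever $y_\mathrm{b}>0$), so the ball is of type $\mathrm{A}$; uniqueness is inherited from the uniqueness of $\omega$ and of the strongly regular profile. Defining $(\rho,p_\mathrm{rad},p_\mathrm{tan},u)$ by the homologous ansatz~\eqref{homansatz2} with reference pressure $\mathcal{P}=\kappa/\gamma$ produces the claimed expanding solution with $R(t)=Z\omega(t)$. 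For the last inequality, $p_\mathrm{tan}-p_\mathrm{rad}=\ptan-\prad=3\kappa(1-\delta/\eta)Q(\delta/\eta)\eta^\gamma$; in the interior $\delta/\eta=y_0(r/\omega)\le1$ and $Q\ge0$ by the weak Baker-Ericksen inequality (Proposition~\ref{BEpro}, Definition~\ref{BEdef}), so $p_\mathrm{tan}\ge p_\mathrm{rad}$ at all times, with equality only at the center.
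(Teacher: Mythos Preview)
Your argument is correct and follows essentially the same route as the paper: you reproduce the Lemma~\ref{propostatic}(i)--(ii) center/barrier analysis to get $\eta_0>\delta_0$ and $\eta_0'<0$, then drop the nonpositive shear term (using $\beta\le4/3$ and $B(y_0)>0$) to obtain the differential inequality $\delta_0'\le -Cz\,\delta_0^{2-\beta}$, and integrate it with $\beta>1$ to force $Z_{\mathrm{max}}<\infty$ and $y_0\to0$, exactly as in the paper's proof. Your write-up is in fact slightly more explicit than the paper's (you identify the constant $C=\theta\alpha(\delta_0^c)^{\beta-4/3}$, spell out why $\eta_0(Z_{\mathrm{max}})>0$, and invoke Proposition~\ref{BEpro} by name for the Baker--Ericksen step), but the strategy is identical.
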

\begin{proof}
To start with we observe that by the exact same argument used in the proof of Lemma~\ref{propostatic}(i), strongly regular solutions of~\eqref{deltahom} satisfy
\[
\delta_0''(0)=-\frac{4\pi G}{3}\mathcal{K}^2\frac{\delta_0^c(\delta_0^c+\alpha)}{a(\delta_0^c,\delta_0^c)}=-\theta(\delta_0^c)^{2/3}(\delta_0^c+\alpha),\quad \eta_0''(0)=\frac{3}{5}\delta''(0).
\]
Hence, for $\alpha>0$, and by the exact same proof as in Lemma~\ref{propostatic}(ii), the inequalities $\eta_0(z)>\delta_0(z)$ and $\eta_0'(z)<0$ hold for $z\in (0,Z_\mathrm{max})$. Thus for $\alpha>0$ and $\beta\leq 4/3$ we have
\[
\delta_0'\leq -\left(\frac{\delta_0}{\eta_0}\right)^{1-\beta}\theta z\frac{\delta_0}{\eta_0^{1/3}}(\eta_0+\alpha)\leq -c\alpha z\delta^{2-\beta},
\] 
for a positive constant $c$. It follows that for $\alpha>0$ and $1<\beta\leq 4/3$, there hold $Z_\mathrm{max}<\infty$ and 
\[
\lim_{z\to Z_\mathrm{max}^-}\delta_0(z)=\lim_{z\to Z_\mathrm{max}^-}y_0(z)=0.
\]
As $y_\mathrm{b}\in (0,1)$, there exists a unique $Z\in (0,Z_\mathrm{max})$ such that $y(z)>y_\mathrm{b}$ for $z\in [0,Z_\mathrm{max})$ and $y(Z)=y_\mathrm{b}$, or equivalently $F_\mathrm{rad}(\delta_0(z),\eta_0(z))>0$ for $z\in (0,Z)$ and $F_\mathrm{rad}(\delta_0(Z),\eta_0(Z))=0$. Moreover, by the Baker-Ericksen inequality, $F_\mathrm{tan}(\delta_0(z),\eta_0(z))\geq F_\mathrm{rad}(\delta_0(z),\eta_0(z))$, hence defining $\rho(t,r)=\mathcal{K}\delta(t,r)$ and $(p_\mathrm{rad}(t,r),p_\mathrm{tan}(t,r))$ as in~\eqref{label}, we obtain that the quadruple $(\rho,p_\mathrm{rad},p_\mathrm{tan},u)\mathbb{I}_{r<R(t)}$ is a homologous self-gravitating elastic ball with radius $R(t)=Z\omega(t)$, for all $t\geq 0$, and $R(t)\to\infty$ as $t\to\infty$.
\end{proof}

\appendix

\section{Appendix: Proof of Theorem~\ref{maintheostatic}}

Recall that 
a solution of~\eqref{deltastat} in the interval $[0,R)$ is called regular if  $\delta\in C^0([0,R))\cap C^1((0,R))$ and $\delta(r)>0$, for all $r\in [0,R)$, and strongly regular if in addition $\delta\in C^1([0,R))$ and $\lim_{r\to 0^+}\delta(r)=0$; see Section~\ref{remstatic}. The following lemma contains results on the limit $r\to R_\mathrm{max}^-$ of regular solutions defined up to the maximal radius $R_\mathrm{max}$.
\begin{lemma}\label{existencelemma}
Let $\gamma\in\R$, $\beta\neq 0$ and $\delta_c>0$. Assume that there exists a unique regular solution of~\eqref{deltastat} satisfying $\eta(0)=\delta(0)=\delta_c>0$ and defined in a maximal radius interval $[0,R_\mathrm{max})$, $R_\mathrm{max}>0$.
Then the following holds:
\begin{itemize}
\item[(i)] If $R_\mathrm{max}<\infty$, then $\lim_{r\to R_\mathrm{max}^-}\delta(r)=0$ and $\lim_{r\to R_\mathrm{max}^-}\eta(r)>0$;
\item[(ii)] If $R_\mathrm{max}=\infty$, then $\lim_{r\to \infty}\delta(r)=\lim_{r\to\infty}\eta(r)=0$.
\end{itemize}
\end{lemma}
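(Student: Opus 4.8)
The plan is to read off the qualitative structure of \eqref{deltastat} from the monotonicity already available, and then to separate the two cases by the sign of the limit of $\eta$. Since the polytropic constitutive function is globally strictly hyperbolic, $\widehat a(\delta,\delta)>0$ for all $\delta>0$, so Lemma~\ref{propostatic}(ii) applies and yields $0<\delta(r)<\eta(r)$ and $\eta'(r)<0$ on $(0,R_\mathrm{max})$. In particular $\eta$ is strictly decreasing and bounded above by $\delta_c$, hence $\delta$ is bounded and $\eta_\infty:=\lim_{r\to R_\mathrm{max}^-}\eta(r)\in[0,\delta_c)$ exists. The one elementary estimate I would isolate first comes from $0<\eta-\delta<\eta$: the second equation in \eqref{deltastat} gives $-3\eta/r<\eta'<0$, so $(\ln\eta)'>-3/r$ and therefore $\eta(r)\ge\eta(r_0)(r_0/r)^3$ for $r>r_0$. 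This single inequality drives both parts.

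For part (i), if $R_\mathrm{max}<\infty$ the displayed lower bound evaluated as $r\to R_\mathrm{max}^-$ gives $\eta_\infty\ge\eta(r_0)(r_0/R_\mathrm{max})^3>0$, which is the second assertion. For the first assertion I would invoke the maximal-existence criterion: on every set $\{\varepsilon\le\delta<\eta\le\delta_c,\ r\in[r_0,R_\mathrm{max}]\}$ the right-hand side of \eqref{deltastat} is bounded and Lipschitz, so if $\delta$ stayed bounded below by some $\varepsilon>0$ the solution would extend past $R_\mathrm{max}$, contradicting maximality; hence $\liminf_{r\to R_\mathrm{max}^-}\delta=0$.

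For part (ii), with $R_\mathrm{max}=\infty$, I would rule out $\eta_\infty>0$ by contradiction. Writing $(\ln\eta)'=-3(1-y)/r$ with $y=\delta/\eta\in(0,1)$ and integrating, $\eta_\infty>0$ forces $\int^\infty(1-y(s))s^{-1}\,ds<\infty$; since $s^{-1}$ is not integrable and $1-y\ge0$, this yields $\limsup_{r\to\infty}y(r)=1$, i.e. $\delta\to\eta_\infty$ along a sequence. But on the region $\{\delta\ge\eta_\infty/2\}$ one has $y\in[\eta_\infty/(2\delta_c),1)$, so the shear term $y^{1-\beta}\tfrac{3(\beta-\gamma)}{r}B(\delta/\eta)\eta$ is $O(1/r)$ while the gravitational term $-y^{1-\beta}\theta r\eta^{2-\gamma}\delta$ is bounded above by $-c\,r$ for a constant $c>0$; hence there is $r_2$ with $\delta'\le-1$ whenever $\delta\ge\eta_\infty/2$ and $r\ge r_2$. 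This is a trapping inequality: $\delta$ must fall below $\eta_\infty/2$ within finite $r$ and can never climb back through the level $\{\delta=\eta_\infty/2\}$ (where $\delta'<0$), so $\limsup_{r\to\infty}\delta\le\eta_\infty/2$ and thus $\limsup y\le 1/2$, contradicting $\limsup y=1$. Therefore $\eta_\infty=0$, and since $0<\delta<\eta$ this gives both limits in (ii).

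The main obstacle is upgrading $\liminf\delta=0$ to $\lim\delta=0$ in part (i), since $\delta$ could a priori oscillate as the trajectory approaches $\{\delta=0,\ \eta=\eta_\infty\}$. When $\beta\le\gamma$ this is immediate: both terms in \eqref{deltastat} are then non-positive (and the gravitational one strictly negative), so $\delta'<0$ throughout, $\delta$ is strictly decreasing, and $\liminf\delta=0$ forces $\lim\delta=0$. The delicate case is $\beta>\gamma$, where the shear term is positive. Here I would analyze the sign of $\delta'$ as $y\to0^+$ using the asymptotics of $B(y)/y$ from \eqref{abpol} (which tends to $+\infty$ for $0<\beta<1$ and to $\tfrac{1}{\beta(\beta-1)}$ for $\beta>1$): this shows that no critical points of $\delta$ accumulate at $y=0$, so $\delta'$ has a constant sign for all sufficiently small $\delta$. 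Were that sign positive, $\delta$ would be repelled from $0$ and $\liminf\delta=0$ would be impossible; hence in case (i) the sign must be negative, $\delta$ is eventually decreasing, and $\lim\delta=0$ follows. Pinning down this sign argument uniformly in the admissible parameters, and handling the degenerate balance $\beta=\gamma$, is where the real work lies.
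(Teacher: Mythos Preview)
Your argument for part (ii) is correct and genuinely different from the paper's. The paper proceeds by first showing (via a case analysis in $\beta$) that $\lim_{r\to\infty}\delta$ exists and equals zero whenever $\eta_\infty>0$, and then invokes L'H\^opital on the integral formula for $\eta$ to force $\eta_\infty=0$. Your route is more direct: the identity $(\ln\eta)'=-3(1-y)/r$ converts $\eta_\infty>0$ into $\int^\infty(1-y)/s\,ds<\infty$, which forces $\limsup y=1$; the trapping estimate $\delta'\le C/r-c\,r$ on $\{\delta\ge\eta_\infty/2\}$ then pins $\delta$ below $\eta_\infty/2$ for all large $r$, giving $\limsup y\le 1/2$. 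This avoids the case splitting entirely and is arguably cleaner.

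For part (i), however, the gap you flag is real and not merely cosmetic. When $\beta>1$ and $\beta>\gamma$, the leading behaviour of $\delta'$ as $y\to 0^+$ is $y^{2-\beta}\eta\bigl(\tfrac{3(\beta-\gamma)}{r\beta(\beta-1)}-\theta r\eta^{2-\gamma}\bigr)$, and the bracket tends to a constant $C=\tfrac{3(\beta-\gamma)}{R_{\max}\beta(\beta-1)}-\theta R_{\max}\eta_\infty^{2-\gamma}$ that can vanish for particular data; your sign dichotomy breaks down there, and one would need higher-order control that you do not supply. The paper sidesteps this entirely by reversing the order of the argument: it first establishes that $\lim_{r\to R_{\max}^-}\delta$ \emph{exists} by bounding $|\delta'|$ (for $\beta\le 2$, using that $\widetilde B(y)=B(y)y^{1-\beta}$ is bounded on $[0,1]$) or $|(\delta^\beta)'|$ (for $\beta>2$), and only then runs the maximality contradiction you used for $\liminf\delta=0$. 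Once the limit is known to exist, positivity of the limit keeps $y$ in a compact subinterval of $(0,1)$, the right-hand side of \eqref{deltastat} stays bounded, and extension past $R_{\max}$ follows---no sign analysis near $y=0$ is needed at all.
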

\begin{proof}

%
The inequalities $\delta(r)<\eta(r)$ and $\eta'(r)<0$, for $r\in (0,R_\mathrm{max})$, will be used in the following arguments, see Lemma~\ref{propostatic}.
\begin{itemize}
\item[(i)] We begin by showing that $\delta(r)$ has a (finite) limit when $r\to R_\mathrm{max}^-$. 
This is obvious when $\beta\leq\gamma$, as in this case $\delta(r)$ is non-increasing. Let $\beta>\gamma$. 
For $\beta\leq 2$ we write the $\delta$-equation in~\eqref{deltastat} as
\begin{equation}\label{temporalino}
\delta'=\frac{3(\beta-\gamma)}{r}\widetilde{B}(\delta/\eta)\eta-\theta r \eta^{1+\beta-\gamma}\delta^{2-\beta},
\end{equation}
where $\widetilde{B}(y)=B(y)y^{1-\beta}$ is bounded for $y\in [0,1]$. Hence for $r\in (0,R_\mathrm{max})$, there holds
\[
|\delta'(r)|\leq \frac{c_1}{r}+c_2 r,
\]
where $c_1,c_2$ are positive constants. It follows that $|\delta'|$ is bounded in the interval $(0,R_\mathrm{max})$ and 
thus $\delta(r)$ converges as $r\to R_\mathrm{max}^-$ when $\beta\leq 2$. For $\beta>2$ we instead write the $\delta$-equation as
\[
\frac{1}{\beta}(\delta^{\beta})'=\delta^{\beta-1} \delta'=\frac{3(\beta-\gamma)}{r}B(\delta/\eta)\eta^{\beta}-\theta r\eta^{1+\beta-\gamma}\delta
\]
and since $B(y)$ is bounded on $[0,1]$ for $\beta>2$, we obtain as before that $|(\delta^{\beta})'|$ is bounded on $(0,R_\mathrm{max})$. Hence $\delta^{\beta}(r)$ converges as $r\to R_\mathrm{max}^-$, and so $\lim_{r\to R_\mathrm{max}^-}\delta(r)$ exists for all $\beta\neq0$.
It will now be proved that this limit is zero. Assume, by contradiction, that $\lim_{r\to R_\mathrm{max}^-}\delta(r)>0$. Then there exists $c\in (0,1)$ such that $c<\delta(r)/\eta(r)<1$, for all $r\in (0,R_\mathrm{max}$), which implies that the right hand side of~\eqref{deltastat} is bounded on $(0,R_\mathrm{max})$.  But then $|\delta'|$ is bounded on $(0,R_\mathrm{max})$, contradicting the hypothesis that $R_\mathrm{max}<\infty$. 
Hence $\lim_{r\to R_\mathrm{max}^-}\delta(r)=0$ and 
\[
\lim_{r\to R_\mathrm{max}^-}\eta(r)=\frac{3}{R_\mathrm{max}^3}\int_0^{R_\mathrm{max}}\delta(s) s^2\,ds>0,
\]
must hold when $R_\mathrm{max}<\infty$. 
\item[(ii)]  Let $\eta_\infty=\lim_{r\to\infty}\eta(r)$. If $\eta_\infty=0$, then $\lim_{r\to\infty}\delta(r)=0$ and thus the claim follows. Assume $\eta_\infty>0$; in particular $\eta_\infty<\eta(r)<\eta(0)=\delta_c$, for all $r\in (0,\infty)$.  As in (i) we start by showing that $\lim_{r\to\infty}\delta(r)$ exists, which again is obvious for $\beta\leq\gamma$. Let $\beta>\gamma$ and assume first $\beta\geq1$. In this case we rewrite the $\delta$-equation in~\eqref{deltastat} as
\[
\delta'=\left(\frac{\delta}{\eta}\right)^{1-\beta}\eta^{2-\gamma}\delta\left(\frac{3(\beta-\gamma)}{r}\frac{B(\delta/\eta)}{\delta/\eta}\eta^{\gamma-2}-\theta r\right)\leq\left(\frac{\delta}{\eta}\right)^{1-\beta}\eta^{2-\gamma}\delta (c r^{-1}-\theta r), 
\]
where $c$ is a positive constant and where we used that $B(y)/y$ is bounded for $y\in (0,1)$ and $\beta\geq1$. Hence $\delta'(r)<0$ for $r>\sqrt{c/\theta}$ and thus $\lim_{r\to\infty}\delta(r)$ exists when $\beta\geq 1$. Assume now $\beta<1$. Then by~\eqref{temporalino} we can write 
\[
\delta'(r)=\frac{3(\beta-\gamma)}{r}\frac{\widetilde{B}(\delta/\eta)}{\delta/\eta}\delta-\theta r\eta^{1+\beta-\gamma}\delta^{2-\beta}\leq \frac{c_1}{r}\delta-c_2 r\delta^{2-\beta},
\]
where $c_1,c_2$ are positive constants and where we used that $\widetilde{B}(y)/y=B(y)y^{-\beta}$ is bounded for $\beta<1$. Hence $\delta(r)\leq u(r)$ for $r\geq 1$, where 
\[
u(r)=\left(\frac{c_2(1-\beta)}{2+c_1(1-\beta)}r^2+\frac{\delta(1)^{\beta-1}(2+c_1(1-\beta))-c_2(1-\beta)}{2+c_1(1-\beta)}\frac{1}{r^{c_1(1-\beta)}}\right)^{-1/(1-\beta)}
\]
is the solution of $u'(r)=c_1u(r)/r-c_2r u(r)^{2-\beta}$ with $u(1)=\delta(1)$. We conclude that $\delta(r)\to 0$ as $r\to\infty$ when $\beta<1$. We claim that $\delta(r)$ converges to zero at infinity even when $\beta>1$. Assume $\lim_{r\to\infty}\delta(r)=\delta_\infty>0$; then
\[
\delta'(r)\sim\left(\frac{\delta_\infty}{\eta_\infty}\right)^{1-\beta}\left(\frac{3(\beta-\gamma)}{r}B(\delta_\infty/\eta_\infty)\eta_\infty-\theta r\eta_\infty^{2-\gamma}\delta_\infty\right)\to -\infty,\quad\text{ as $r\to\infty$}, 
\]
a contradiction. Hence $\lim_{r\to\infty}\delta(r)=0$ must hold and thus also $\lim_{r\to\infty}\eta(r)=0$, by L'H\^{o}pital's rule.
\end{itemize}
\end{proof}
In the next proposition the existence of strongly regular solutions to~\eqref{deltastat} in a maximal radius interval $[0,R_\mathrm{max})$ is proved and sufficient conditions on the parameters $\gamma,\beta$ are given such that $R_\mathrm{max}$ is finite or $R_\mathrm{max}=\infty$. 
\begin{proposition}\label{teoremone}
For all $\gamma\in\R$, $\beta\neq 0$ and $\delta_c>0$, the system~\eqref{deltastat} admits a unique strongly regular local solution $(\delta,\eta)$ such that $\delta(0)=\eta(0)=\delta_c$. Letting $[0,R_\mathrm{max})$ be the maximal radius interval of definition of this solution the following holds:
\begin{itemize}
\item[$\mathrm{(A)}$] If $\gamma>2$ and $1<\beta\leq\gamma$, then $R_\mathrm{max}<\infty$; 
\item[$\mathrm{(B)}$] If $0<\gamma\leq \beta<1$ or $\beta<\gamma\leq 1$, then $R_\mathrm{max}=\infty$ and $\lim_{r\to\infty}y(r)=\frac{4-3\gamma}{3(2-\gamma)}$. 
\end{itemize} 
\end{proposition}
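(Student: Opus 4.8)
The plan is to prove the three assertions --- local existence and uniqueness, finiteness of $R_\mathrm{max}$ in case (A), and global existence with the stated limit in case (B) --- separately, using throughout the structural facts $0<\delta(r)<\eta(r)\le\delta_c$ and $\eta'(r)<0$ on $(0,R_\mathrm{max})$ supplied by Lemma~\ref{propostatic} (applicable since $\widehat a(\delta,\delta)=3\frac{1-\nu}{1+\nu}\eta^{\gamma-1}>0$ here). For local existence near $r=0$ I would first observe that the only apparent obstruction is the factor $\tfrac1r$ in the $\delta$-equation of~\eqref{deltastat}. A short expansion of $B$ gives $B(1)=B'(1)=0$, $B''(1)=1$, so $B(\delta/\eta)=O((\delta-\eta)^2)$, while strong regularity forces $\eta-\delta=O(r^2)$; hence $\tfrac{3(\beta-\gamma)}{r}B(\delta/\eta)\eta=O(r^3)$ and the right-hand side extends continuously to $r=0$ with value $0$. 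The singular system can then be recast as a fixed-point problem on $C^0([0,\varepsilon])$ exactly as in~\cite[Thm.~1]{AC19}, producing the unique strongly regular local solution with $\delta'(0)=0$.

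For case (A) I would exploit that $1<\beta\le\gamma$ makes the shear term non-positive (as $B\ge0$), so that, using $\delta<\eta\le\delta_c$,
\[
\delta'\le -\Big(\tfrac{\eta}{\delta}\Big)^{\beta-1}\theta r\,\eta^{2-\gamma}\delta=-\theta r\,\eta^{\beta+1-\gamma}\delta^{2-\beta}.
\]
Bounding $\eta^{\beta+1-\gamma}$ below by a constant times a power of $\delta$ (distinguishing the sign of $\beta+1-\gamma$, using $\delta<\eta\le\delta_c$) reduces this to a Riccati-type inequality $\delta'\le-c\,r\,\delta^{p}$ with $c>0$ and $p<1$: one gets $p=3-\gamma<1$ when $\beta+1-\gamma\ge0$ (since $\gamma>2$) and $p=2-\beta<1$ otherwise (since $\beta>1$). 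Separating variables then shows $\delta^{1-p}$ decreases at least linearly in $r^2$ and must vanish at a finite radius; since $\delta>0$ cannot be continued past that point, $R_\mathrm{max}<\infty$.

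For global existence in case (B) the point is that $\beta<1$ and $\gamma\le1$ make the decay of $\delta$ too slow to extinguish in finite radius, so I would argue by contradiction. If $R_\mathrm{max}<\infty$, Lemma~\ref{existencelemma}(i) gives $\delta\to0$ and $\eta\to\eta_*>0$, so near $R_\mathrm{max}$ the quantities $r,\eta,\eta^{2-\gamma}$ lie in compact positive intervals. Since $y^{-\beta}B(y)$ is bounded on $(0,1]$ one has $y^{1-\beta}B(y)\le Cy$, and $y^{1-\beta}\le1$; feeding these into the $\delta$-equation yields $|\delta'|\le C'\delta$, whence Gronwall gives $\delta(r)\ge\delta(r_1)e^{-C'(R_\mathrm{max}-r_1)}>0$, contradicting $\delta\to0$. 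Thus $R_\mathrm{max}=\infty$, and Lemma~\ref{existencelemma}(ii) then gives $\delta,\eta\to0$.

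Computing the limit is the crux. I would pass to $x=\log r$ and introduce the scale-invariant quantity $w=\theta r^2\eta^{2-\gamma}$, which turns~\eqref{deltastat} into the autonomous planar system
\begin{align*}
\dot y&=3(\beta-\gamma)y^{1-\beta}B(y)-w\,y^{2-\beta}+3y(1-y),\\
\dot w&=w\big(2-3(2-\gamma)(1-y)\big),
\end{align*}
with dots denoting $d/dx$ and $y=\delta/\eta$. The relevant interior fixed point is $y_\infty=\frac{4-3\gamma}{3(2-\gamma)}\in(0,1)$ together with the value $w_\infty>0$ fixed by $\dot y=0$, and the trajectory emanates as $x\to-\infty$ from the corner fixed point $(1,0)$ (since $w\to0$ as $r\to0$ and $y(0)=1$). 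The plan is to build a forward-invariant trapping region, bounded by the nullclines $y=y_\infty$ and the $\dot y$-nullcline $w=\Phi(y)$, that contains the trajectory and only this one fixed point, using that $\dot w$ has the sign of $y-y_\infty$, and then to conclude convergence to $(y_\infty,w_\infty)$; translating back gives $\lim_{r\to\infty}y(r)=y_\infty$. The main obstacle is precisely this global convergence: one must exclude oscillation and spurious $\omega$-limit sets, e.g.\ by checking that the linearization at $(y_\infty,w_\infty)$ has positive determinant $3(2-\gamma)w_\infty y_\infty^{2-\beta}$ and negative trace $\partial_y\dot y(y_\infty,w_\infty)$ (a sign computation drawing on $\gamma\le1$ and the two parameter ranges in (B)), and by ruling out periodic orbits via Bendixson--Dulac or the one-signed monotonicity of $w$ on each side of $y=y_\infty$, so that Poincaré--Bendixson pins the limit set to the fixed point.
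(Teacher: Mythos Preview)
Your treatment of local existence and of case~(A) matches the paper's argument essentially line for line (same Riccati inequality $\delta'\le -\theta r\,\eta^{1+\beta-\gamma}\delta^{2-\beta}$ and the same two sub-cases on the sign of $\beta+1-\gamma$). Your proof that $R_{\max}=\infty$ in case~(B) is genuinely different and cleaner than the paper's: instead of showing boundedness of the orbit in a planar dynamical system, you use Lemma~\ref{existencelemma}(i) together with the observation that for $\beta<1$ the function $y^{-\beta}B(y)$ is bounded on $(0,1]$, giving $|\delta'|\le C'\delta$ near $R_{\max}$ and a Gronwall contradiction. This buys you a short self-contained argument that does not require setting up the phase plane.

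The gap is in the limit $y(r)\to\frac{4-3\gamma}{3(2-\gamma)}$. Your framework (autonomous planar system, Poincar\'e--Bendixson, Dulac) is exactly right, but two of the three ingredients you list are incomplete. First, the ``one-signed monotonicity of $w$ on each side of $y=y_\infty$'' does \emph{not} exclude periodic orbits: on any periodic orbit encircling the sink, $w$ increases while $y>y_\infty$ and decreases while $y<y_\infty$, with zero net change over a period, so no contradiction arises (think of a linear center). Second, you never establish that the orbit is bounded, which is needed before Poincar\'e--Bendixson can be invoked; your $w$-equation $\dot w=w\big(2-3(2-\gamma)(1-y)\big)$ has no self-damping, so boundedness is not immediate in your coordinates, and the ``trapping region bounded by the nullclines'' is not actually forward invariant (the flow crosses both $y=y_\infty$ and $w=\Phi(y)$ transversally, producing a spiral rather than a box). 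The paper resolves both issues by using instead the variable $v=w\,y^{1-\beta}=\theta r^2\eta^{2-\gamma}y^{1-\beta}$: then $\dot v$ acquires the term $-(1-\beta)v^2$, which gives the logistic bound $v\le a/(1-\beta)$, and the explicit Dulac multiplier $\phi(y,v)=v^{-1}y^{\beta-2}$ yields $\nabla\cdot(\phi F)=-3\phi(y,v)\big(1-\gamma(1-y)\big)<0$ for $\gamma\le 1$, ruling out periodic orbits. With the sink check for the interior fixed point and the saddle structure at $(0,0)$, Poincar\'e--Bendixson then forces $\omega(\Gamma)=\mathrm{P}$. So your plan is on the right track, but the specific Dulac function and the better choice of second variable are the missing pieces.
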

\begin{proof}
Assume first $\gamma\neq2$. The change of variables
\[
y=\delta/\eta,\quad v=\theta r^2\eta^{2-\gamma}y^{1-\beta},\quad \xi=\log r
\]
transforms~\eqref{deltastat} into the autonomous dynamical system
\begin{subequations}\label{dynsys}
\begin{align}
&\frac{dy}{d\xi}=[\Upsilon(y)-v]y,\\
&\frac{dv}{d\xi}=[(1-\beta)(\Upsilon(y)-v)+2-3(2-\gamma)(1-y)]v,
\end{align}
where
\begin{equation}\label{upsilon}
\Upsilon(y)=3(1-y)+3(\beta-\gamma)B(y)y^{-\beta}.
\end{equation}
\end{subequations}
As $(dy/d\xi)_{y=1}<0$, for all $v>0$, the open region $\mathcal{U}=\{v>0, y<1\}$ of the state space is future invariant (which is equivalent to the bound $\eta>\delta$ proved in Lemma~\ref{propostatic}(ii)). A simple local stability analysis shows that
the boundary point $\mathrm{O}$ with coordinates $v=0, y=1$ is an hyperbolic saddle. The positive eigenvalue of the linearised flow around $\mathrm{O}$ is equal to 2 and the corresponding eigenvector $-5\partial_y+\partial_v$ points toward the interior of $\mathcal{U}$. Hence there exists exactly one orbit $\Gamma=(\Gamma_y,\Gamma_v)\subset\mathcal{U}$ such that $\lim_{\xi\to-\infty}\Gamma(\xi)=\mathrm{O}$. Moreover $1-y(\xi)\sim 5C e^{2\xi}$ and $v(\xi)\sim C e^{2\xi}$ as $\xi\to-\infty$ along this orbit, where $C$ is a positive constant. This orbit corresponds to a one parameter family of regular solutions of~\eqref{deltastat} up to $\xi_*$ such that $\Gamma_y(\xi)>0$ for $\xi<\xi_*$, and by Lemma~\ref{propostatic}(i) these solutions are strongly regular. Uniqueness follows if we show that the constant $C$ is uniquely determined by the center datum $\delta(0)=\eta(0)=\delta_c$. 
As $\gamma\neq 2$, this follows by the definition of $v$, which gives $C=\theta \delta_c^{2-\gamma}$. Assume now $\gamma=2$. In this case, the system~\eqref{deltastat} is equivalent to the following decoupled system on $y=\delta/\eta$ and $\eta$:
\begin{equation}\label{tempocacchio}
y'=\left(\frac{\Upsilon(y)}{r}-\theta r y^{1-\beta}\right)y,\quad \eta'=-\frac{3\eta}{r}(1-y).
\end{equation}
The same argument as above gives now a local unique regular solution of the $y$-equation with asymptotic behavior $y=1+O(r^2)$ as $r\to 0^+$, and thus uniqueness for the system~\eqref{tempocacchio} follows by simply integrating the $\eta$-equation with center datum $\eta(0)=\delta_c$. By Lemma~\ref{propostatic} the unique regular solution is strongly regular and thus the first part of the proposition is proved. 

{\bf Proof of $\mathrm{(A)}$:}
As $\beta\leq\gamma$ we have
\begin{equation}\label{temporalaccio}
\delta'\leq -\theta r \eta^{1+\beta-\gamma}\delta^{2-\beta}.
\end{equation}
For $\beta\leq \gamma-1$ we use $\eta\leq\eta(0)=\delta_c$ in~\eqref{temporalaccio} to obtain $\delta'\leq -cr\delta^{2-\beta}$, and thus
\[
\delta(r)\leq\left(\delta_c^{\beta-1}-\frac{c}{2}(\beta-1)r^2\right)^{\frac{1}{\beta-1}},
\] 
which, as $\beta>1$, implies $R_\mathrm{max}<\infty$. For $\gamma-1<\beta\leq \gamma$ we instead use $\eta>\delta$ in~\eqref{temporalaccio} to obtain $\delta'\leq-\theta r\delta^{3-\gamma}$, which, as $\gamma>2$, implies again $R_\mathrm{max}<\infty$.

{\bf Proof of $\mathrm{(B)}$:}
To prove $\mathrm{(B)}$ we study the qualitative behavior toward the future of the orbit $\Gamma$ of the dynamical system~\eqref{dynsys} originating from the fixed point $\mathrm{O}$. 
As $\beta<1$, the function $\Upsilon$ is bounded for $y\in[0,1]$ and 
\[
\Upsilon(0)=3\frac{1-\gamma}{1-\beta}\geq 0.
\] 
In fact, a straightforward analysis shows that $\Upsilon(y)>0$ for all $y\in (0,1)$.
In particular the region $\mathcal{V}=\{v>0, 0<y<1\}$ is future invariant and $\Gamma(\xi)\subset\mathcal{V}$, for all $\xi\in\R$. Moreover $v'(\xi)\leq (a-(1-\beta)v)v$, where $a=\sup_{y\in(0,1)}[(1-\beta)\Upsilon(y)+2-3(2-\gamma)(1-y)]<\infty$ and thus $v(\xi)\leq a/(1-\beta)$ along the orbit $\Gamma$. It follows that the $\omega$-limit set $\omega(\Gamma)$ of $\Gamma$ is not empty. By Poincar\'e-Bendixson theorem, $\omega(\Gamma)$ must be one of the following sets: (1) a fixed point; (2) a periodic orbit; (3)
 a connected set consisting of a finite number of fixed points $\{P_1,\dots,P_n\}$ together with homoclinic and heteroclinic orbits connecting $P_1,\dots,P_n$.
Let $\phi(y,v)=v^{-1}y^{\beta-2}$ and let $F(v,y)$ denote the vector field in the right hand side of~\eqref{dynsys}. Since $\nabla\cdot(\phi F)(y,v)=-3\phi(y,v)(1-\gamma(1-y))$ is negative for $\gamma\leq 1$ and $(y,v)\in\mathcal{V}$, then, by Dulac-Bendixson theorem, no periodic orbits exist in the region $\mathcal{V}$ and thus the alternative (2) above is not possible.
The alternative (3) can be ruled out by studying the stability properties of the fixed points of the flow. Besides $\mathrm{O}$, the dynamical system~\eqref{dynsys} admits the fixed points $\mathrm{Q}=(0,0)$ and
\[
\mathrm{P}=(\Upsilon(y_\mathrm{P}),y_\mathrm{P}),\quad \text{where}\quad \ y_\mathrm{P}=\frac{4-3\gamma}{3(2-\gamma)}.
\]
No other fixed points are present when $\gamma\leq 1$. A simple local stability analysis shows that $\mathrm{P}$ is an hyperbolic sink, and thus it is a local attractor for a one parameter family of interior orbits, 
while $\mathrm{Q}$ is an hyperbolic saddle. The stable manifold of $\mathrm{Q}$ is tangent to the axis $y=0$, while the unstable manifold is tangent to $v=0$. In particular there is no interior orbit which converges to or emanates from $\mathrm{Q}$. Putting this information together we conclude that none of the structures mentioned in the alternative (3) above exists in the region $\mathcal{V}$. Thus Poincar\'e Bendixson theorem entails that the fixed point $\mathrm{P}$ is the $\omega$-limit set of all orbits entering the region $\mathcal{V}$ and so in particular $\Gamma(\xi)\to\mathrm{P}$ as $\xi\to\infty$, which is equivalent to $\lim_{r\to\infty}y(r)=\frac{4-3\gamma}{3(2-\gamma)}$.  

\end{proof}


\begin{proof}[Proof of Theorem~\ref{maintheostatic}]
When~\eqref{condgammabetaI} holds, Proposition~\ref{teoremone} gives that the maximal radius interval of definition of strongly regular solutions of~\eqref{deltastat} is finite  
and 
by  Lemma~\ref{existencelemma}(i)
\[
y(r)=\frac{\delta(r)}{\eta(r)}\to 0,\quad \text{as $r\to R_\mathrm{max}^-$}.
\]
As $y(0)=1$ and $y_\mathrm{b}\in (0,1)$, there exists a unique $R\in (0,R_\mathrm{max})$ such that $y(r)>y_\mathrm{b}$ for $r\in [0,R)$ and $y(R)=y_\mathrm{b}$. Thus, letting $p_\mathrm{rad}(r)=F_\mathrm{rad}(\delta(r),\eta(r))$, we have $p_\mathrm{rad}(R)=0$ and $p_\mathrm{rad}(r)>0$, for $r\in [0,R)$. Moreover, by Lemma~\ref{propostatic}(iii), 
\[
p_\mathrm{tan}(r)=F_\mathrm{tan}(\delta(r),\eta(r))\geq p_\mathrm{rad}(r), \quad r>0.
\]
Hence, letting $\rho(r)=\mathcal{K}\delta(r)$, we obtain that $(\rho,p_\mathrm{rad},p_\mathrm{tan})\mathbb{I}_{r\leq R}$ is a type $\mathrm{A}$  static self-gravitating elastic ball. When~\eqref{condgammabetaII} holds, strongly regular solutions of~\eqref{deltastat} are global and $y\to\frac{4-3\gamma}{3(2-\gamma)}=y_\infty>0$ as $r\to\infty$, see again Proposition~\ref{teoremone}. Since the last inequality in~\eqref{condgammabetaII} is equivalent to $y_\infty<y_\mathrm{b}$, then there exists $R\in (0,R_\mathrm{max})$ such that $y(r)>y_\mathrm{b}$ for $r\in [0,R)$ and $y(R)=y_\mathrm{b}$, and thus the proof can be completed as before.
\end{proof}

\end{document}